\documentclass[11pt,onecolumn]{article}

\usepackage[margin=1.15in]{geometry}
\usepackage[T1]{fontenc}
\usepackage{lmodern}
\usepackage[final]{microtype}
\usepackage{titlesec}
\titleformat{\section}{\Large\bfseries}{\thesection}{0.75em}{}
\titleformat{\subsection}{\large\bfseries}{\thesubsection}{0.75em}{}
\titlespacing*{\section}{0pt}{2.4ex plus 0.8ex minus 0.2ex}{1.2ex plus 0.2ex}
\titlespacing*{\subsection}{0pt}{1.9ex plus 0.6ex minus 0.2ex}{0.9ex plus 0.2ex}
\usepackage{mathtools}
\allowdisplaybreaks
\usepackage{amsthm}
\usepackage{amssymb}
\usepackage[numbers,sort]{natbib}
\usepackage{authblk}
\usepackage{enumitem}
\usepackage{mathrsfs}
\usepackage{tikz-cd}

\setlist[enumerate,1]{label=(\alph*), leftmargin=2.2em, itemsep=0.2em, topsep=0.4em}
\setlist[enumerate,2]{label=(\roman*), leftmargin=2.2em, itemsep=0.15em, topsep=0.25em}

\usepackage{aliascnt}

\usepackage[pagebackref,colorlinks,
linkcolor={red!70!black},
citecolor={green!60!black},
urlcolor={blue!80!black}]{hyperref}
\usepackage[capitalize]{cleveref}

\newcommand{\R}{\mathbb{R}}
\newcommand{\N}{\mathbb{N}}
\newcommand{\Z}{\mathbb{Z}}
\newcommand{\Q}{\mathbb{Q}}
\newcommand{\C}{\mathbb{C}}
\newcommand{\K}{\mathbb{K}}
\newcommand{\id}{\mathsf{id}}
\newcommand{\ev}{\mathrm{ev}}

\newcommand{\cV}{\mathscr{V}}
\newcommand{\cW}{\mathscr{W}}
\newcommand{\cC}{\mathsf{C}}
\newcommand{\cD}{\mathsf{D}}
\newcommand{\cJ}{\mathsf{J}}
\newcommand{\op}{\mathrm{op}}
\newcommand{\name}[1]{\ulcorner #1 \urcorner}

\newcommand{\Ab}{\mathsf{Ab}}
\newcommand{\ROBan}{\mathsf{ROBan}}
\newcommand{\Ban}{\mathsf{Ban}}
\newcommand{\Mod}{\mathsf{Mod}}
\newcommand{\fdVect}{\mathsf{fdVect}}
\newcommand{\Vect}{\mathsf{Vect}}
\newcommand{\OrdVect}{\mathsf{OrdVect}}
\newcommand{\OpSys}{\mathsf{OpSys}}

\theoremstyle{plain}
\newtheorem{theorem}{Theorem}[subsection]
\newtheorem*{theorem*}{Theorem}
\newaliascnt{lemma}{theorem}
\newtheorem{lemma}[lemma]{Lemma}
\aliascntresetthe{lemma}
\newaliascnt{lem}{theorem}
\newtheorem{lem}[lem]{Lemma}
\aliascntresetthe{lem}
\newaliascnt{prop}{theorem}
\newtheorem{prop}[prop]{Proposition}
\aliascntresetthe{prop}
\newaliascnt{corollary}{theorem}
\newtheorem{corollary}[corollary]{Corollary}
\aliascntresetthe{corollary}
\newaliascnt{proposition}{theorem}
\newtheorem{proposition}[proposition]{Proposition}
\aliascntresetthe{proposition}
\theoremstyle{remark}
\newaliascnt{remark}{theorem}
\newtheorem{remark}[remark]{Remark}
\aliascntresetthe{remark}
\theoremstyle{definition}
\newaliascnt{definition}{theorem}
\newtheorem{definition}[definition]{Definition}
\aliascntresetthe{definition}
\newaliascnt{example}{theorem}
\newtheorem{example}[example]{Example}
\aliascntresetthe{example}
\crefname{theorem}{Theorem}{Theorems}
\crefname{lemma}{Lemma}{Lemmas}
\crefname{lem}{Lemma}{Lemmas}
\crefname{prop}{Proposition}{Propositions}
\crefname{proposition}{Proposition}{Propositions}
\crefname{corollary}{Corollary}{Corollaries}
\crefname{remark}{Remark}{Remarks}
\crefname{definition}{Definition}{Definitions}
\crefname{example}{Example}{Examples}

\title{Nonunital Operator Systems\\ as Modules in Enriched Category Theory}

\author{Tobias Fritz}
\author{Tim Netzer}
\affil{\small Department of Mathematics, University of Innsbruck, Austria}
\date{\today}
\begin{document}

\newgeometry{left=1.15in,right=1.15in,top=0.1in,bottom=1.15in}
\maketitle

\begin{abstract}
	An operator system is similar to a module over a ring, with the role of scalar multiplication played by the action of completely positive maps.
	Using enriched category theory, we make this analogy into a precise categorical equivalence,
	namely between a certain category of nonunital operator systems and a certain category of left modules over the category of matrix algebras
	enriched over regularly ordered Banach spaces.
	Using right modules instead yields an equivalence with a certain category of nonunital dual operator systems.

	We also develop general separation, representation and extension theorems for modules in enriched category theory.
	Specializing these to our nonunital operator systems recovers results which partly recover the corresponding classical theorems for operator systems.
\end{abstract}

\begingroup
\linespread{0.95}\selectfont
\setlength{\parskip}{0pt}
\vspace{0.5em}
\tableofcontents
\endgroup
\restoregeometry

\section{Introduction}

Unital operator systems are the natural linear setting for completely positive maps:
they retain the matrix order structure of $C^*$-algebras without requiring closure under multiplication~\cite{Paulsen}.
Thanks to this, they play a central role in noncommutative convexity theory~\cite{WebsterWinklerOperatorConvexity,KennedyKimManorNonunital}
and quantum information theory~\cite{GuptaMandayamSunderQuantumInformation,DeLasCuevasNetzerWonderland}.

An operator system $S$ is in particular a matrix ordered vector space, which means that every matrix level $M_n(S)$ comes equipped with a cone of positive elements $M_n(S)_+$,
which is such that the action of completely positive maps $M_n \to M_m$ preserves positivity.
One can view this action by completely positive maps as analogous to the action of scalars on a module over a ring.
Our first goal is to make this analogy more precise.
Doing so requires a framework flexible enough to accommodate both the matrix structure and a notion of positivity.
As we will see, this can be achieved within \textbf{enriched category theory},
and specifically by considering \textbf{modules} over enriched categories.
As a result, we obtain a categorical equivalence between a certain category of \textbf{nonunital operator systems} 
and a certain category of left modules over the category of matrix algebras enriched over regularly ordered Banach spaces.

A different categorical approach to operator systems was proposed in~\cite{bos}.
Both approaches encode an operator system $S$ essentially as a functor which assigns to every matrix algebra $M_n$ the space of hermitian elements $M_n(S)_h$
and to every completely positive map $M_n \to M_m$ the induced map $M_n(S)_h \to M_m(S)_h$, which is required to preserve positivity.\footnote{In~\cite{bos}, only compressions $x \mapsto \alpha^* x \alpha$ are considered explicitly, but the extension to arbitrary completely positive maps is immediate.}
An important difference arises in how the matrix levels are encoded:
the fact that the functor acts like $M_n \mapsto M_n(S)_h$ is a primitive requirement in~\cite{bos},
while it is a derived statement in our new approach.
As additional improvements, our setting treats infinite-dimensional operator systems with the same ease as finite-dimensional ones,
and it also naturally produces a certain notion of nonunital operator system.

Further features of the present work are as follows: 
\begin{itemize}
	\item Our approach recovers a certain nonunital version of operator systems, and does so without any finite-dimensionality requirement.
	\item The abstract categorical machinery produces several results that are purely categorical,
		but whose instantiations reproduce part of the fundamental theorems on operator systems.
			In particular, we recover the classical theorem of Choi--Effros on embedding unital operator systems into $\mathcal{B}(\mathcal{H})$~\cite[Theorem~13.1]{Paulsen}.
		We also prove a general categorical extension theorem, but in our current setting it does not yet recover Arveson's extension theorem for completely positive maps.
\end{itemize}

Of course, any more abstract approach to a well-established theory needs to at least reproduce the classical results in some form.
Perhaps most prominent among these are the following, where ``module'' means ``module over an enriched category'':
\begin{itemize}
	\item The fact that completely positive maps into matrix algebras separate points of an operator system (\cref{cor:op_sys_separation})
		will be recovered as an instance of a general categorical separation theorem for modules (\cref{separation}).
	\item Similarly, the Choi--Effros representation theorem for unital operator systems (\cref{rem:op_sys_representation})
		can largely be obtained from a general categorical representation theorem for modules (\cref{representation}).
		In addition, the actual instantiation of the abstract representation theorem amounts to a representation theorem for nonunital operator systems (\cref{cor:op_sys_representation}).
	\item We also provide a general categorical extension theorem for modules (\cref{extension}),
		which is of a similar flavor as Arveson's extension theorem for completely positive maps.
		However, in our current setting the former does not yet specialize to the latter.
\end{itemize}

We now present a summary of the paper by sketching the contents of each section.

\cref{sec:module_categories} provides some relevant background material on \textbf{left and right modules} over an arbitrary enriched category $\cC$
together with their morphisms.
After the main \cref{def:module}, we recall the enriched Yoneda lemma and Yoneda embedding in \cref{lem:enriched_yoneda}
and discuss representable modules $\cC(W, -)$ and $\cC(-, W)$.
Familiar examples such as modules over rings follow, as well as some observations on base change along monoidal functors (\cref{sec:base_change}).
Overall, this section mainly fixes the notation and conventions used later,
where operator systems appear as left modules and dual operator systems as right modules.

\Cref{sec:cosmos_modules} starts with the stronger assumption that the base of enrichment $\cV$ is a \textbf{B\'enabou cosmos} (\cref{def:benabou_cosmos}).
In this case, the categories of modules ${}_\cC\Mod$ and $\Mod_\cC$ are themselves canonically $\cV$-enriched.
We then recall additional categorical machinery that will be useful later on,
including the enriched co-Yoneda lemma (\cref{lem:enriched_co_yoneda}) and Morita equivalence.
We then fix a distinguished object $\bot$ in the cosmos,
which will play the role of ``dualizing object'' in the sense that we will be interested in modules of $\bot$-valued morphisms.
The resulting dualization functor ``$\neg$'' turns left modules into right modules and vice versa (\cref{lem:neg_op_module}),
and this produces in particular the crucial \textbf{corepresentable modules} $\neg \cC(-, W)$ and $\neg \cC(W, -)$.
We also comment on our use of logical symbols $\bot$ and $\neg$ in \cref{rem:negation}\ref{rem:negation_heyting}.

The central parts of the categorical machinery are presented in \cref{sec:regular_bigenerators,sec:main_results},
where we impose additional conditions on the distinguished object $\bot$, making it a \textbf{regular bigenerator} (\cref{def:regular_bigenerator}).
This definition can perhaps be thought of as an abstract categorical version of the Hahn--Banach theorem,
in the sense that a regular bigenerator $\bot$ enjoys similar properties as the ground field does in functional analysis.
The conceptual basis for our main categorical results is the enriched \textbf{dual Yoneda lemma} (\cref{prop:dual_yoneda_enriched}),
which identifies module morphisms into a corepresentable module $\mathcal{F} \to \neg \cC(W, -)$ with morphisms $\mathcal{F}(W) \to \bot$ in the underlying category $\cV_0$.
This then leads to our main categorical results:
\begin{enumerate}
	\item\label{item:separation}
			Our \textbf{separation theorem} asserts that corepresentable modules form a coseparating family whenever $\bot$ is a coseparator (\cref{separation}),
	\item The abstract \textbf{representation theorem} says that whenever $\bot$ is a regular bigenerator, then every module embeds into a product of corepresentable modules (\cref{representation}).
	\item\label{item:extension}
		The abstract \textbf{extension theorem} shows that if $\bot$ is injective in $\cV_0$ with respect to regular monomorphisms,
		then the same holds for every product of corepresentable modules in the module category,
\end{enumerate}
Thus in this abstract theory, products of corepresentable modules play the role of universal ``receiving'' objects somewhat analogous to the role of $\mathcal{B}(\mathcal{H})$ in the theory of operator systems.

As a warm-up application, \Cref{sec:banach_modules} specializes our categorical machinery to a theory of Banach modules.
In this case, the underlying category $\cV_0$ is the category of Banach spaces and linear contractions, and the distinguished object $\bot$ is the ground field $\K \in \{\R, \C\}$.
This is a regular bigenerator and regularly injective by the Hahn--Banach theorem, and dualization is ordinary Banach-space duality.
Consequently, the three abstract theorems yield the concrete separation, isometric representation, and extension statements for Banach modules collected in \cref{thm:banach_modules}.

Finally, \Cref{operator_systems} presents our main application to nonunital operator systems.
This is based on the B\'enabou cosmos $\ROBan$, which consists of \textbf{regularly ordered Banach spaces} and positive contractions (\cref{def:roban}).
\Cref{sec:regularly_ordered_banach_spaces} discusses its properties in some detail and proves that the ground field $\R$ is a regular bigenerator (\cref{prop:roban_regular_bigenerator}).
We subsequently introduce \textbf{operator systems} in \cref{def:os}: their Hermitian matrix levels are regularly ordered Banach spaces compatible with scalar conjugation and satisfying the \textbf{weak direct sum axiom} 
\[
	\left\|
	\begin{pmatrix}
		x & 0 \\
		0 & x
	\end{pmatrix}
	\right\|
	=
	\|x\|,
\]
which is a weaker version of Ruan's direct sum axiom for operator spaces.
We also introduce \textbf{dual operator systems} based on a suitably dual version of the weak direct sum axiom.
After introducing the $\ROBan$-category of matrix algebras and completely positive contractions $\mathsf{CMat}$,
we turn to our main structural result \cref{thm:op_sys_as_modules}:
\begin{theorem*}
	\begin{enumerate}
		\item The category of left $\mathsf{CMat}$-modules is equivalent to the category of operator systems.
		\item The category of right $\mathsf{CMat}$-modules is equivalent to the category of dual operator systems.
	\end{enumerate}
\end{theorem*}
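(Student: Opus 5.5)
Both equivalences will be built by explicit mutually inverse functors, and the real content is showing that the matrix levels of a module are determined by its value at the $1\times 1$ level. For part (a), a left $\mathsf{CMat}$-module is a $\ROBan$-functor $\mathcal{F}\colon \mathsf{CMat}\to\ROBan$. It consists of regularly ordered Banach spaces $\mathcal{F}(M_n)$ together with positive contractions
\[
	\mathsf{CMat}(M_n,M_m)\to \ROBan\bigl(\mathcal{F}(M_n),\mathcal{F}(M_m)\bigr)
\]
that are functorial. These hom-spaces are real spans of completely positive maps, normed so that the unit ball's positive part is the set of cp contractions.

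Given an operator system $S$, I set $\mathcal{F}_S(M_n):=M_n(S)_h$ and let a cp map $\varphi\colon M_n\to M_m$ act via its Kraus/Choi form $\varphi(x)=\sum_i \alpha_i^* x\alpha_i$. That is, $\varphi\cdot s:=\sum_i \alpha_i^* s\alpha_i$, which I extend linearly to differences of cp maps. Well-definedness will follow because this action can be written canonically as $(\id_S\otimes\varphi)$ on $S\otimes M_n$. Positivity is the matrix ordering axiom. Contractivity of the action map is where the weak direct sum axiom $\|x\oplus x\|=\|x\|$ enters. I would write a cp contraction as a compression of an amplification $x\mapsto V^*(x\otimes 1_k)V$ with $\|V\|\le 1$ and then use the axiom to bound norms. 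Functoriality is immediate.

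For the converse, I would recover $S$ from $\mathcal{F}$. The guess is $S_h:=\mathcal{F}(M_1)$ and $S:=S_h\otimes_\R\C$ with the obvious conjugation. The key step is a natural isomorphism $\mathcal{F}(M_n)\cong M_n(S)_h$. Since $M_n$ as an object of $\mathsf{CMat}$ is built from $M_1$ by the maps $x\mapsto e_{ij}$-type compressions, I would realize this using the entry maps. These are the cp maps $M_n\to M_1$, $x\mapsto v^*xv$, together with their real-linear combinations, which give the off-diagonal real and imaginary parts via polarization. Assembling them, with the corresponding maps $M_1\to M_n$, $t\mapsto t\,ww^*$, should give mutually inverse maps. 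Their inverse relation follows from identities in $\mathsf{CMat}$ such as $\sum_i \iota_i\pi_i=\id$ written as a signed sum of cp maps. Then I define $M_n(S)_+$ as the image of $\mathcal{F}(M_n)_+$ and transfer the norms, and I check that this yields an operator system in the sense of \cref{def:os}. Regularity is inherited, and the weak direct sum axiom comes from contractivity of $x\mapsto x\oplus x$ and of its compression back. The same entry maps show morphisms correspond: a $\ROBan$-natural transformation is determined by its $M_1$-component, and naturality forces it to act entrywise, that is, completely positively and completely contractively.

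Part (b) is formally dual: right modules are functors $\mathsf{CMat}^\op\to\ROBan$. Here $\mathcal{F}(M_n)$ should be identified with $M_n(S)_h$ carrying the dual-type norms, and the dual weak direct sum axiom arises from contractivity along the opposite direction. The same entry-map argument applies.

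I expect the main obstacle to be the reconstruction step $\mathcal{F}(M_n)\cong M_n(\mathcal{F}(M_1))_h$ as an isomorphism in $\ROBan$, not just of vector spaces. The linear bijection comes from polarization, but matching norms and cones requires showing that the transferred norm on $M_n(S)_h$ is exactly the norm of $\mathcal{F}(M_n)$, and that regularity and cone structure are preserved. Closely tied to this is verifying that the action on $\mathcal{F}_S$ is contractive on the full (non-positive) hom-space. I would attack that by using regularity of the $\ROBan$ hom norms to reduce to cp contractions, and then using the weak direct sum axiom via the Stinespring-type form $V^*(x\otimes 1)V$.
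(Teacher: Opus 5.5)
Your part (a) is essentially the paper's proof. It uses the same functor $S\mapsto(M_n(S)_h)_n$ with entrywise action, and the same contractivity argument: the Stinespring form $\Psi(x)=\alpha^*(x\oplus\cdots\oplus x)\alpha$ combined with the conjugation axiom and the weak direct sum axiom (iterated, then compressed). Full faithfulness goes via the $M_1$-component, and the reconstruction uses $S=\mathcal{F}(M_1)_\C$ with transported cones and norms.

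The paper forgets to $\Vect_\R$ and quotes the Morita equivalence of \cref{ex:mor} and \cref{ex:vect_modules}. You instead write out the biproduct decomposition $M_{n,h}\cong\R^{n^2}$ inside $\mathsf{CMat}$ via entry maps, with $\pi_j\iota_k=\delta_{jk}$ and $\sum_k\iota_k\pi_k=\id$. Since biproducts are absolute colimits, this is the same argument made explicit.

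The step you flag as the main obstacle is not one. The cones and norms on $M_n(S)_h$ are \emph{defined} by transport, so the identification is an isomorphism in $\ROBan$ tautologically. What does need checking is:
\begin{enumerate}
\item that the transported structure satisfies the axioms of \cref{def:os}, which is immediate since each axiom is the action of a specific cp contraction;
\item that under your identification $\mathcal{F}(\Psi)$ is the entrywise action of $\Psi$. This follows by expanding $\Psi=\sum_{k,l}\iota'_l(\pi'_l\Psi\iota_k)\pi_k$, whose middle factors lie in $\mathsf{CMat}(M_1,M_1)=\R$.
\end{enumerate}
There is also a notational slip. The enriched functor maps $\mathsf{CMat}(M_n,M_m)$ into the internal hom $\mathcal{F}(M_n)\Rightarrow\mathcal{F}(M_m)$, not into the set $\ROBan(\mathcal{F}(M_n),\mathcal{F}(M_m))$, because non-cp maps act by non-positive maps. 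The reduction to cp contractions acting on positive contractions is then exactly \cref{roban_misc}\ref{roban_universal_property}.

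The genuine gap is part (b), which you dispose of as ``formally dual''. To even define a functor $\OpSys^\vee\to\Mod_{\mathsf{CMat}}$, you must say how $\Psi\colon M_n\to M_m$ acts contravariantly $M_m(T)_h\to M_n(T)_h$. The missing ingredient is the trace adjoint $\Psi^\dagger$.

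With $\Psi^\dagger$ in hand, contractivity of the action on a dual operator system comes from the Stinespring form above. It factors $\Psi^\dagger(x)$ as the scalar conjugation $x\mapsto\alpha x\alpha^*$ followed by a block trace. The block trace is contractive by iterating the dual weak direct sum axiom, padding with zero blocks.

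Conversely, for a right module $\mathcal{G}$ your entry maps only give $\mathcal{G}(M_n)\cong\mathcal{G}(M_1)\otimes_\R\mathsf{CMat}(M_n,M_1)$. You must still identify $\mathsf{CMat}(M_n,M_1)=M_{n,h}^*$ with $M_{n,h}$ compatibly, e.g.\ via the trace pairing, so that precomposition with $\Psi$ becomes $\Psi^\dagger$. Only then do the dual axioms become actions of cp contractions:
\begin{enumerate}
\item $x\mapsto\alpha^*x\alpha$ is the trace adjoint of $y\mapsto\alpha y\alpha^*$;
\item the block trace $\begin{psmallmatrix} x & z\\ z^* & y\end{psmallmatrix}\mapsto x+y$ is the trace adjoint of $x\mapsto x\oplus x$.
\end{enumerate}

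Note also why no formal duality is available. The map $\Psi\mapsto\Psi^\dagger$ preserves complete positivity but not cb norms: the trace $M_n\to\C$ has cb norm $n$, while its adjoint $1\mapsto 1_n$ has cb norm $1$. So $\dagger$ does not identify $\mathsf{CMat}^\op$ with $\mathsf{CMat}$ as $\ROBan$-categories, which is precisely why the dual weak direct sum axiom differs from the original one. Your phrase ``contractivity along the opposite direction'' points at this, but without the trace adjoint it is not yet an argument.
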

We then instantiate the abstract categorical results~\ref{item:separation}--\ref{item:extension} in this case.
This yields in particular a representation theorem for operator systems, where the embedding is into a product of corepresentable modules (\cref{cor:op_sys_representation}).
We explain in \cref{rem:op_sys_representation} how this result reproduces the usual Choi--Effros embedding theorem for unital operator systems.
Unfortunately, our abstract extension theorem does not specialize to an extension theorem for operator systems in our sense,
because $\R$ fails to be regularly injective in $\ROBan$ (\cref{rem:op_sys_extension}).
Finally, \cref{rem:os_tensor_closed} gives a glimpse of how the general categorical machinery also induces a tensor product on operator systems and indicates how we expect this to relate to duality theory for operator systems.


We leave further applications of our general categorical results \cref{separation,representation,extension} to future work.

\section{Module categories of enriched categories}
\label{sec:module_categories}

\subsection{Basics}

Suppose that $\cV=(\cV_0,\otimes,I)$ is a monoidal category.
To simplify notation, we will often suppress associators and unitors in $\cV$ from the notation, which makes no difference since we can assume $\cV$ to be strict monoidal without loss of generality.

Then we consider small $\cV$-enriched categories $\cC$ and write $\cC_0$ for the underlying ordinary category,
which is obtained by applying the hom-functor $\cV_0(I, -)$ to the hom-objects of $\cC$.
As per standard conventions, this means that composition in $\cC$ is given by a family of morphisms
\[
	\circ \: : \: \cC(Y, Z) \,\otimes\, \cC(X, Y) \longrightarrow \cC(X, Z),
\]
where the unintuitive order of the hom-objects is chosen so as to match the order of composition in the ordinary category $\cC_0$, which is $(g, f) \mapsto g \circ f$.
The identity morphisms are given by a family of morphisms $j_X : I \to \cC(X, X)$.

\begin{definition}[{cf.~\cite[Section~3]{StreetEnrichedCohomology}}]
	\label{def:module}
	Let $\cC$ be a small $\cV$-category.
	\begin{enumerate}
		\item A \textbf{left $\cC$-module} $\mathcal{F}$ is given by a family of objects $\mathcal{F}(X) \in \cV_0$ for every $X \in \cC$ and a family of morphisms $\mathcal{F}_{X, Y} : \cC(X, Y) \otimes \mathcal{F}(X) \longrightarrow \mathcal{F}(Y)$
			satisfying associativity 
			\[
				\begin{tikzcd}
					\cC(Y, Z) \otimes \cC(X, Y) \otimes \mathcal{F}(X) \ar[r, "\circ \otimes \id"] \ar[d, "\id \otimes \mathcal{F}_{X, Y}"'] & \cC(X, Z) \otimes \mathcal{F}(X) \ar[d, "\mathcal{F}_{X, Z}"] \\
					\cC(Y, Z) \otimes \mathcal{F}(Y) \ar[r, "\mathcal{F}_{Y, Z}"] & \mathcal{F}(Z)
				\end{tikzcd}
			\]
			and unitality conditions
			\[
				\begin{tikzcd}
					\mathcal{F}(X) \ar[r, "\id"] \ar[d, "\cong"'] & \mathcal{F}(X) \\
					I \otimes \mathcal{F}(X) \ar[r, "j_X \otimes \id"] & \cC(X, X) \otimes \mathcal{F}(X) \ar[u, "\mathcal{F}_{X, X}"']
				\end{tikzcd}
			\]
		\item Given left $\cC$-modules $\mathcal{F}$ and $\mathcal{G}$, a \textbf{morphism of left $\cC$-modules} $\eta : \mathcal{F} \to \mathcal{G}$ is given by a family of morphisms $\eta_X : \mathcal{F}(X) \to \mathcal{G}(X)$ such that for every pair of objects $X, Y \in \cC$, the diagram
			\[
				\begin{tikzcd}
					\cC(X, Y) \otimes \mathcal{F}(X) \ar[r, "\mathcal{F}_{X, Y}"] \ar[d, "\id \otimes \eta_X"'] & \mathcal{F}(Y) \ar[d, "\eta_Y"] \\
					\cC(X, Y) \otimes \mathcal{G}(X) \ar[r, "\mathcal{G}_{X, Y}"] & \mathcal{G}(Y)
				\end{tikzcd}
			\]
			commutes.
	\end{enumerate}
	We denote the resulting ordinary category by ${}_\cC\Mod$.
\end{definition}

For every object $W \in \cC$, there is a \textbf{representable left module} $\cC(W, -)$ defined in the obvious way using composition in $\cC$ as the action.

A \textbf{right $\cC$-module} is defined similarly, but with action morphisms $\mathcal{F}_{X, Y} : \mathcal{F}(Y) \otimes \cC(X, Y) \longrightarrow \mathcal{F}(X)$,
and with the evident associativity and unitality axioms and the analogous naturality axiom for morphisms of modules.
We denote the resulting category of right $\cC$-modules by $\Mod_\cC$.
For every object $W \in \cC$, there is also a representable right $\cC$-module $\cC(-, W)$.

If the enriching category $\cV$ is symmetric monoidal, then every $\cV$-category $\cC$ has an opposite $\cV$-category $\cC^\op$ with the same objects and hom-objects $\cC^\op(X,Y) \coloneqq \cC(Y,X)$.
In this case, left $\cC$-modules are equivalently right $\cC^\op$-modules, since the symmetry identifies an action
\[
	\cC(X, Y) \otimes \mathcal{F}(X) \longrightarrow \mathcal{F}(Y)
\]
with an action
\[
	\mathcal{F}(X) \otimes \cC(X, Y) = \mathcal{F}(X) \otimes \cC^\op(Y, X) \longrightarrow \mathcal{F}(Y),
\]
which is precisely the structure of a right $\cC^\op$-module.
Thus we get an equivalence of categories ${}_\cC\Mod \cong \Mod_{\cC^\op}$.

\begin{example}
	\label{ex:ab_modules}
	We take $\cV_0 = \Ab$, equipped with the usual tensor product.
	Then we can consider a ring $R$ as an $\Ab$-category with a single object $\star$.
	A right module in the current sense is then precisely a right $R$-module in the usual sense, and our category of right modules is the usual category of right $R$-modules.
	A left $R$-module is equivalently a right $R^\op$-module.
\end{example}

\subsection{The Yoneda lemma and Yoneda embedding}

We first recall an enriched version of the Yoneda lemma and Yoneda embedding, see e.g.~\cite[\S 1.9]{Kelly}.
The variance matches standard conventions best if we work with right modules.

\begin{lem}
	\label{lem:enriched_yoneda}
	For every object $A \in \cC$ and every right $\cC$-module $\mathcal{F}$, there is a natural bijection
	\[
		\Mod_\cC(\cC(-, A), \mathcal{F}) \cong \cV_0(I, \mathcal{F}(A)).
	\]
	Moreover, the assignment $A \mapsto \cC(-, A)$ extends to a fully faithful embedding $\cC_0 \hookrightarrow \Mod_{\cC}$.
\end{lem}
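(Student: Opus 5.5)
We prove the enriched Yoneda lemma directly from \cref{def:module}, using only the monoidal structure of $\cV$. No closed structure is needed, since the bijection lands in the ordinary hom-set $\cV_0(I, \mathcal{F}(A))$.

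Both directions of the bijection are explicit. Given a module morphism $\eta : \cC(-, A) \to \mathcal{F}$, we send it to the composite
\[
	I \xrightarrow{j_A} \cC(A, A) \xrightarrow{\eta_A} \mathcal{F}(A).
\]
Conversely, given $x : I \to \mathcal{F}(A)$, we define $\hat{x}_X : \cC(X, A) \to \mathcal{F}(X)$ as the composite
\[
	\cC(X, A) \cong \mathcal{F}\text{-free form } I \otimes \cC(X,A) \xrightarrow{x \otimes \id} \mathcal{F}(A) \otimes \cC(X, A) \xrightarrow{\mathcal{F}_{X, A}} \mathcal{F}(X),
\]
where the first step is the inverse of the left unitor $I \otimes \cC(X,A) \cong \cC(X,A)$. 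So $\hat x$ is ``acting on $x$''.

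There are four checks to carry out.
\begin{enumerate}
	\item $\hat{x}$ is a morphism of right modules. This is the naturality square with the action of $\cC(-,A)$ given by composition $\circ : \cC(Y, A) \otimes \cC(X, Y) \to \cC(X, A)$. It follows from the associativity axiom for $\mathcal{F}$, after tensoring on the left with $x$ and using naturality of unitors.
	\item The composite $x \mapsto \hat{x} \mapsto \hat{x}_A \circ j_A$ is the identity. This is exactly the unitality axiom for $\mathcal{F}$ at the object $A$.
	\item The composite $\eta \mapsto \widehat{\eta_A j_A}$ returns $\eta$. This is the step requiring the most care, and it carries the real content. We use the naturality square of $\eta$ with $Y = A$, namely $\eta_X \circ (\circ) = \mathcal{F}_{X, A} \circ (\eta_A \otimes \id)$ on $\cC(A, A) \otimes \cC(X, A)$. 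Precomposing with $j_A \otimes \id$ and invoking the unit law of $\cC$, $\circ \,(j_A \otimes \id) = $ unitor, gives $\eta_X = \widehat{\eta_A j_A}_X$.
	\item Naturality in $\mathcal{F}$ is immediate from the formula $\eta \mapsto \eta_A j_A$. Naturality in $A$, with respect to $\cC_0$, follows by applying the same computation to precomposition with $\cC(-, f)$.
\end{enumerate}

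For the embedding, a morphism $f : I \to \cC(A, B)$ of $\cC_0$ induces $\cC(-, f) : \cC(-, A) \to \cC(-, B)$, with components $\circ (f \otimes \id) : \cC(X,A) \to \cC(X, B)$. That this is a module map follows from associativity of composition in $\cC$. Functoriality follows from associativity and unitality in $\cC$.

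For full faithfulness, apply the bijection with $\mathcal{F} = \cC(-, B)$. It sends $\cC(-, f)$ to $\circ (f \otimes j_A) = f$ by the unit law of $\cC$. Hence the map $f \mapsto \cC(-, f)$ is a section of the Yoneda bijection $\Mod_\cC(\cC(-,A), \cC(-,B)) \cong \cV_0(I, \cC(A, B))$, and therefore is itself a bijection.

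The argument involves no genuine obstacle beyond diagram bookkeeping. The only delicate point is in check (c): one must use the naturality square of $\eta$ with its first tensor factor specialized via $j_A$, which reduces everything to the unit axiom of the enriched category.
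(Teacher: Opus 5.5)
Your proposal is correct and follows essentially the same route as the paper. You use the same mutually inverse maps $\eta \mapsto \eta_A j_A$ and $x \mapsto \mathcal{F}_{-,A}(x \otimes \id)$ (precomposed with the unitor), obtain the module-morphism property from associativity of $\mathcal{F}$, get one round trip from unitality of $\mathcal{F}$ (together with the interchange law), and get the other from the naturality square of $\eta$ precomposed with $j_A \otimes \id$. Your full-faithfulness step is slightly more careful than the paper's, since you check that the bijection at $\mathcal{F} = \cC(-,B)$ is actually inverse to the functor's action $f \mapsto \cC(-,f)$, rather than only exhibiting an abstract bijection of hom-sets.
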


We include the proof as a warm-up exercise in enriched category theory.

\begin{proof}
	Given a morphism of modules $\eta \colon \cC(-, A) \to \mathcal{F}$, we obtain a morphism in $\cV_0$ defined as the composite
	\[
		\Phi(\eta) \:\coloneqq\:
		\begin{tikzcd}[column sep=large, baseline=(current bounding box.center)]
			I \ar[r, "j_A"] & \cC(A, A) \ar[r, "\eta_A"] & \mathcal{F}(A).
		\end{tikzcd}
	\]
	Conversely, given $x \colon I \to \mathcal{F}(A)$, define for every $X \in \cC$ a morphism
	\[
		\Psi(x)_X \colon
		\begin{tikzcd}[column sep=large]
			\cC(X, A) \ar[r, "\cong"] & I \otimes \cC(X, A) \ar[r, "x \otimes \id"] & \mathcal{F}(A) \otimes \cC(X, A) \ar[r, "\mathcal{F}_{X, A}"] & \mathcal{F}(X).
		\end{tikzcd}
	\]
	We claim that these are the components of a morphism of $\cC$-modules $\Psi(x)$.
	This amounts to showing the commutativity of
	\[
		\begin{tikzcd}
			\cC(Y, A) \otimes \cC(X, Y) \ar[r, "\circ"] \ar[d, "\cong"] & \cC(X, A) \ar[d, "\cong"] \\
			I \otimes \cC(Y, A) \otimes \cC(X, Y) \ar[r, "\id \otimes \circ"] \ar[d, "x \otimes \id"]  & I \otimes \cC(X, A) \ar[d, "x \otimes \id"] \\
			\mathcal{F}(A) \otimes \cC(Y, A) \otimes \cC(X, Y) \ar[r, "\id \otimes \circ"] \ar[d, "\mathcal{F}_{Y,A} \otimes \id"] & \mathcal{F}(A) \otimes \cC(X, A) \ar[d, "\mathcal{F}_{X, A}"] \\
			\mathcal{F}(Y) \otimes \cC(X, Y) \ar[r, "\mathcal{F}_{X, Y}"] & \mathcal{F}(X)
		\end{tikzcd}
	\]
	which holds because each of the three squares commutes.

	We next show that $\Phi$ and $\Psi$ are inverse to each other.
	For $x \colon I \to \mathcal{F}(A)$, we have
	\[
		\begin{tikzcd}
			I \ar[r, "j_A"] \ar[d, "x"] & \cC(A, A) \ar[r, "\cong"] & I \otimes \cC(A, A) \ar[d, "x \otimes \id"] \\
			\mathcal{F}(A) \ar[dr, "\id"'] \ar[r, "\cong"]	     & \mathcal{F}(A) \otimes I \ar[r, "\id \otimes j_A"] & \mathcal{F}(A) \otimes \cC(A, A) \ar[dl, "\mathcal{F}_{A, A}"] \\
									     & \mathcal{F}(A) 
		\end{tikzcd}
	\]
	where the upper square commutes by the interchange law and the lower part by the unitality condition for the module $\mathcal{F}$.
	This shows that $\Phi(\Psi(x)) = x$, and hence $\Phi \circ \Psi = \id$.

	Conversely, let $\eta \colon \cC(-, A) \to \mathcal{F}$ be a morphism of modules.
	Then for each $X \in \cC$, the diagram
	\[
		\begin{tikzcd}
			& I \otimes \cC(X, A) \ar[dr, "\cong"] \ar[dl, "j_A \otimes \id"'] & \\
			\cC(A, A) \otimes \cC(X, A) \ar[rr, "\circ"] \ar[d, "\eta_A \otimes \id"'] & & \cC(X, A) \ar[d, "\eta_X"] \\
			\mathcal{F}(A) \otimes \cC(X, A) \ar[rr, "\mathcal{F}_{X, A}"] & & \mathcal{F}(X)
		\end{tikzcd}
	\]
	commutes, where the lower part is the morphism of modules square for $\eta$.
	This shows that $\Psi(\Phi(\eta))_X = \eta_X$.
	Thus $\Phi$ and $\Psi$ are inverse bijections, natural in both $A$ and $\mathcal{F}$.

	For the second statement,
	taking $\mathcal{F} = \cC(-, B)$ yields
	\[
		\Mod_\cC(\cC(-, A), \cC(-, B)) \cong \cV_0(I, \cC(A, B)) = \cC_0(A, B),
	\]
	which is exactly the required full faithfulness.
\end{proof}

\subsection{Base change}
\label{sec:base_change}

Let $\cW$ be another monoidal category, with underlying ordinary category $\cW_0$, and let $U : \cV_0 \to \cW_0$ be a lax monoidal functor.
Then $U$ induces a base change operation turning every $\cV$-enriched category $\cC$ into a $\cW$-enriched category $U_*(\cC)$ with the same objects and $U_*(\cC)(X, Y) \coloneqq U(\cC(X, Y))$.
For a given right $\cC$-module $\mathcal{F}$, we can also apply $U$ to the components $\mathcal{F}_{X,Y} : \mathcal{F}(Y) \otimes \cC(X, Y) \longrightarrow \mathcal{F}(X)$,
which in combination with the lax structure of $U$ induce a family of morphisms
\[
	U(\mathcal{F}(Y)) \otimes U(\cC(X, Y)) \longrightarrow U(\mathcal{F}(Y) \otimes \cC(X, Y)) \longrightarrow U(\mathcal{F}(X)).
\]
These define a right $U_*(\cC)$-module which we denote by $U_*(\mathcal{F})$.
In this way, we obtain a base change functor $U_* : \Mod_\cC \longrightarrow \Mod_{U_*(\cC)}$.

\begin{lem}
	\label{faithful}
	If $U$ is faithful, then so is $U_*$.
\end{lem}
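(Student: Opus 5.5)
We show directly that $U_*$ is injective on hom-sets, which is how \cref{faithful} should be read. First we spell out what $U_*$ does on morphisms, since the text above only describes it on objects. Given a morphism of right $\cC$-modules $\eta : \mathcal{F} \to \mathcal{G}$ with components $\eta_X : \mathcal{F}(X) \to \mathcal{G}(X)$, we set $U_*(\eta)_X \coloneqq U(\eta_X)$.

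Let us check that this is a morphism of right $U_*(\cC)$-modules. The naturality square for $U_*(\eta)$ splits into two squares:
\begin{itemize}
	\item the first is the naturality square of the lax structure map $U(-) \otimes U(-) \to U(- \otimes -)$, applied to $\eta_Y$ and $\id_{\cC(X,Y)}$;
	\item the second is $U$ applied to the module-morphism square for $\eta$.
\end{itemize}
Both commute, so the whole square commutes. Functoriality of $U_*$ then follows from functoriality of $U$, componentwise.

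Now for faithfulness. Let $\eta, \theta : \mathcal{F} \to \mathcal{G}$ be morphisms of right $\cC$-modules with $U_*(\eta) = U_*(\theta)$. By definition of equality of module morphisms, this means $U(\eta_X) = U(\theta_X)$ for every object $X \in \cC$. Each pair $\eta_X, \theta_X$ consists of parallel morphisms $\mathcal{F}(X) \to \mathcal{G}(X)$ in $\cV_0$, so faithfulness of $U$ gives $\eta_X = \theta_X$ for all $X$. Morphisms of modules are determined by their components, so $\eta = \theta$.

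There is no real obstacle here. The only points needing care are the well-definedness of $U_*$ on morphisms, which uses naturality of the lax monoidal structure, and the observation that module morphisms are nothing more than families of components, so equality can be checked objectwise. The same argument applies verbatim to left modules.
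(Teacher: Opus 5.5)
Your proposal is correct and follows the paper's own argument: equality $U_*(\eta)=U_*(\theta)$ means $U(\eta_X)=U(\theta_X)$ for all $X$, faithfulness of $U$ gives $\eta_X=\theta_X$, and module morphisms are determined by their components. Your extra check that $U_*$ is well defined on morphisms, via naturality of the lax structure map, is a sound addition that the paper leaves implicit.
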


\begin{proof}
	A morphism of right modules is given by its family of components.
	So if $\eta, \theta \colon \mathcal{F} \to \mathcal{G}$ satisfy $U_*(\eta) = U_*(\theta)$, then this means $U(\eta_X) = U(\theta_X)$
	for every object $X \in \cC$.
	By faithfulness of $U$, this implies $\eta_X = \theta_X$ for all $X$, and hence $\eta = \theta$.
\end{proof}

In terms of the paradigm of \emph{property, structure, stuff}~\cite[Section~2.4]{BS},
\cref{faithful} means that a $\cC$-module can be viewed as a $U_*(\cC)$-module equipped with extra structure.

\section{Module categories with enrichment in a B\'enabou cosmos}
\label{sec:cosmos_modules}

\subsection{Basics}
\label{rem:enriched_functor_category}

Suppose now that $\cV$ is symmetric monoidal closed, with tensor-hom adjunction
\begin{equation}
	\label{internalhom}
	\cV_0(A \otimes B, C) \cong \cV_0(A, B \Rightarrow C).
\end{equation}
Throughout the rest of the paper, we will write
\[
	\ev_{B, C} : (B \Rightarrow C) \otimes B \longrightarrow C
\]
for the associated evaluation morphism, which is the counit of the adjunction.
By the standard theory of adjunctions,
the evaluation morphism has the property that the bijections~\eqref{internalhom} are implemented by composing with the evaluation morphisms,
\begin{align}
	\label{internalhom_explicit}
	\begin{split}
		\cV_0(A, B \Rightarrow C) & \longrightarrow \cV_0(A \otimes B, C) \\
		f & \longmapsto \ev_{B,C} (f \otimes \id_B).
	\end{split}
\end{align}
Monoidal closedness also equips the objects of $\cV_0$ with a canonical $\cV$-category whose hom-objects are $A \Rightarrow B$.
Following~\cite[\S 1.6]{Kelly}, we denote this self-enriched category again by $\cV$ and identify its underlying ordinary category with $\cV_0$.

For example, applying the tensor-hom adjunction to \cref{def:module} shows that a left $\cC$-module $\mathcal{F}$ is equivalently given by a family of morphisms
\[
	\cC(X, Y) \longrightarrow \left( \mathcal{F}(X) \Rightarrow \mathcal{F}(Y) \right)
\]
satisfying evident compatibility conditions with identities and composition.
In other words, left $\cC$-modules are the same thing as $\cV$-enriched functors $\cC \to \cV$,
and morphisms of left $\cC$-modules are exactly enriched natural transformations.
Similarly, a right $\cC$-module $\mathcal{F}$ is equivalently given by morphisms
\[
	\cC(X, Y) \longrightarrow \left( \mathcal{F}(Y) \Rightarrow \mathcal{F}(X) \right)
\]
satisfying suitable conditions,
which amount to the data of a $\cV$-enriched functor $\cC^\op \longrightarrow \cV$.

\begin{definition}[{e.g.~\cite{StreetCosmoi}}]
	\label{def:benabou_cosmos}
	A \textbf{B\'enabou cosmos} is a symmetric monoidal closed category $\cV$ whose underlying ordinary category $\cV_0$ is complete and cocomplete.
\end{definition}

Under this additional assumption, we can also consider our categories of modules as the underlying ordinary categories of enriched functor categories.
Indeed thanks to the completeness assumption, for enriched functors $\mathcal{F},\mathcal{G}\colon \cC \to \cV$, there is a hom-object in $\cV_0$ given by the enriched end\footnote{We refer to \cref{app:limits_colimits} for the definition of enriched end and to \cite{Kelly} for a more in-depth development.}
\[
	[\cC, \cV](\mathcal{F}, \mathcal{G}) \coloneqq \int_{X \in \cC} \bigl(\mathcal{F}(X) \Rightarrow \mathcal{G}(X)\bigr).
\]
Likewise, for enriched functors $\mathcal{F},\mathcal{G}\colon \cC^\op \to \cV$, the hom-object in $[\cC^\op,\cV]$ is
\[
	[\cC^\op, \cV](\mathcal{F}, \mathcal{G}) \coloneqq \int_{X \in \cC^\op} \bigl(\mathcal{F}(X) \Rightarrow \mathcal{G}(X)\bigr).
\]
The ordinary hom-sets are then obtained by taking the underlying ordinary categories.
Thus when $\cV$ is a B\'enabou cosmos, we have equivalences of ordinary categories
\[
	{}_\cC\Mod \cong [\cC,\cV]_0, \qquad \Mod_\cC \cong [\cC^\op,\cV]_0.
\]

In addition to the Yoneda \cref{lem:enriched_yoneda},
we will also use the following form of the \textbf{co-Yoneda lemma}, also known as \textbf{Yoneda reduction}.

\begin{lemma}[{e.g.~\cite[Corollary~2.3]{DundasRondigsOstvaerEnrichedFunctors}}]
	\label{lem:enriched_co_yoneda}
	Suppose that $\cV$ is a B\'enabou cosmos and $\cC$ a small $\cV$-category.
	Then for every right $\cC$-module $\mathcal{F}$ and every $W \in \cC$, we have
	\begin{equation}
		\label{co_yoneda}
		\int^{X \in \cC}\mathcal{F}(X)\otimes\cC(W, X) \,\cong\, \mathcal{F}(W).
	\end{equation}
\end{lemma}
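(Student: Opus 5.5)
The plan is to prove \eqref{co_yoneda} by showing that both sides corepresent the same functor on $\cV_0$, and then to conclude by the ordinary Yoneda lemma in $\cV_0$. Fix $W \in \cC$ and an arbitrary test object $B \in \cV_0$. By the universal property of the coend (see \cref{app:limits_colimits}), morphisms
\[
	\int^{X \in \cC}\mathcal{F}(X)\otimes\cC(W, X) \longrightarrow B
\]
correspond to families $\phi_X \colon \mathcal{F}(X)\otimes\cC(W,X) \to B$ that are extranatural in $X$. Concretely, for all $X, Y$ the two composites
\[
	\mathcal{F}(Y) \otimes \cC(X,Y) \otimes \cC(W,X) \rightrightarrows B
\]
must agree. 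The first composite goes through $\mathcal{F}_{X,Y} \otimes \id$ followed by $\phi_X$. The second goes through $\id \otimes \circ$ followed by $\phi_Y$.

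The first main step is to transpose such a family through the tensor-hom adjunction \eqref{internalhom}. This gives morphisms
\[
	\tilde\phi_X \colon \cC(W,X) \longrightarrow \bigl(\mathcal{F}(X) \Rightarrow B\bigr).
\]
Now $X \mapsto (\mathcal{F}(X) \Rightarrow B)$ is a left $\cC$-module, since contravariance in the right module $\mathcal{F}$ flips the variance. I will check that extranaturality of $\phi$ translates exactly into the condition that $\tilde\phi$ is a morphism of left $\cC$-modules $\cC(W,-) \to (\mathcal{F}(-) \Rightarrow B)$. This is a routine diagram chase using \eqref{internalhom_explicit} and naturality of $\ev$.

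The second main step applies the left-module version of the enriched Yoneda \cref{lem:enriched_yoneda}. That version holds by the symmetric argument, or via ${}_\cC\Mod \cong \Mod_{\cC^\op}$. It identifies module morphisms $\cC(W,-) \to (\mathcal{F}(-)\Rightarrow B)$ with morphisms $I \to (\mathcal{F}(W) \Rightarrow B)$. By closedness, these are the same as morphisms $\mathcal{F}(W) \to B$.

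Composing these bijections gives a bijection
\[
	\cV_0\Bigl(\int^{X}\mathcal{F}(X)\otimes\cC(W,X),\, B\Bigr) \;\cong\; \cV_0\bigl(\mathcal{F}(W), B\bigr).
\]
It remains to check that this bijection is natural in $B$. That is clear, since every step is given by postcomposition-compatible operations. The ordinary Yoneda lemma in $\cV_0$ then yields the isomorphism \eqref{co_yoneda}. Unwinding the construction, the isomorphism is induced by the action maps $\mathcal{F}_{W,X} \colon \mathcal{F}(X)\otimes\cC(W,X) \to \mathcal{F}(W)$; these form a cowedge by associativity of the action.

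I expect the main obstacle to be the first step. There one must verify carefully that extranaturality matches the module-morphism condition, keeping track of the orders of tensor factors and the symmetry. A subtlety is that the coend universal property used here is the ordinary one in $\cV_0$, whereas the enriched coend is characterized by an enriched universal property. To handle this, I would note that in a cosmos the enriched coend is the ordinary coequalizer of
\[
	\coprod_{X,Y} \mathcal{F}(Y)\otimes\cC(X,Y)\otimes\cC(W,X) \rightrightarrows \coprod_X \mathcal{F}(X)\otimes\cC(W,X).
\]
This matches the extranaturality description above, and the enriched universal property follows from closedness.
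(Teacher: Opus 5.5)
Your proof is correct, and it also correctly identifies the isomorphism as the map $a$ induced by the action cowedge. It is a different route from anything in the paper, because the paper gives no proof of this lemma. It only cites the literature and, in the proof of \cref{prop:dual_yoneda_enriched}, records the inverse $b\colon\mathcal{F}(W)\cong\mathcal{F}(W)\otimes I\to\mathcal{F}(W)\otimes\cC(W,W)\xrightarrow{\iota_W}\int^{X}\mathcal{F}(X)\otimes\cC(W,X)$, where $\iota$ is the universal cowedge. The direct proof matching that description is a two-line check. First, $ab=\id$ is the unit axiom of $\mathcal{F}$. Second, $ba\iota_X=\iota_X$ follows by precomposing the cowedge condition at the pair $(W,X)$ with $\id\otimes\id\otimes j_W$ and using $f\circ\id_W=f$ in $\cC$.

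Your representability argument replaces this check by a reduction to \cref{lem:enriched_yoneda}, applied to the left module $\mathcal{F}(-)\Rightarrow B$, plus the ordinary Yoneda lemma in $\cV_0$. It costs some bookkeeping: the chase matching extranaturality with the module-morphism condition, and naturality in $B$. In return, the explicit form of the isomorphism falls out of the Yoneda inverse $\Psi$ with no separate verification.

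Your route also gives something the paper does not use. After the first step, your chain only concerns cowedges into $B$, never the coend itself. For $B=\bot$, transposing once more identifies such cowedges with right-module morphisms $\mathcal{F}\to\neg\cC(W,-)$. So your argument yields the set-level dual Yoneda bijection \eqref{dual_yoneda_set} directly from \cref{lem:enriched_yoneda}, bypassing co-Yoneda and \cref{lem:neg_coends_to_ends}. That bijection is all that \cref{separation,extension} use.

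The subtlety you flag about enriched versus ordinary universal properties does not arise here. \cref{def:enriched_coend} defines the coend as a conical colimit in $\cV_0$.
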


More precisely, the isomorphism~\eqref{co_yoneda} is implemented from left to right by the cowedge consisting of the action morphisms
\[
	\mathcal{F}_{W, X} \: : \:
	\mathcal{F}(X)\otimes\cC(W, X)\longrightarrow \mathcal{F}(W),
\]
and this concrete form is how we generally use the co-Yoneda lemma.

\begin{remark}
Throughout the remainder of this section,
we often phrase our main results without explicitly mentioning the morphism(s) which implement the statements that we prove.
However, what these morphisms are is often important both for later proofs and for the applications.
In this sense, our results display \emph{proof relevance}~\cite{nLabProofRelevance}.
\end{remark}

\begin{example}
	\label{ex:co_yoneda_ab}
	Continuing \cref{ex:ab_modules}, it is well-known that $\cV_0=\Ab$ underlies a B\'enabou cosmos,
	where a hom-object $A \Rightarrow B$ is given by the set of homomorphisms $\Ab(A, B)$ equipped with the pointwise group structure.
	As before, we take $\cC$ to be a ring $R$ considered as an $\Ab$-category with a single object $\star$.
	Then the resulting enrichment of $\Mod_R$ over $\Ab$ is given by the usual abelian group structure on the hom-sets of $R$-modules.

	For a right $R$-module $M$, the co-Yoneda lemma says that the action map induces the usual isomorphism
	\[
		M \otimes_R R \cong M,
		\qquad
		m \otimes r \longmapsto mr.
	\]
	Indeed, the coend
	\[
		\int^{\star \in \cC} M \otimes \cC(\star,\star)
	\]
	is precisely the quotient of the abelian group $M \otimes_{\Z} R$ by the balancing relations
	\[
		(mr)\otimes s = m \otimes rs,
	\]
	and this is exactly the construction of $M \otimes_R R$.
\end{example}

\begin{remark}
	\label{rem:day_convolution}
	Suppose that $\cC$ is a small \emph{symmetric monoidal} $\cV$-category, and write $\boxtimes$ and $E$ for its tensor product and unit.
	Then Day convolution equips the $\cV$-category of right modules $[\cC^\op,\cV]$ with a closed symmetric monoidal structure~\cite[Thm.~3.3 and \S 4]{DayClosedFunctorCategories}.
	Its tensor product is given by
	\[
		(\mathcal{F}\star\mathcal{G})(X)
		\coloneqq
		\int^{Y,Z\in\cC}
		\mathcal{F}(Y)\otimes\mathcal{G}(Z)\otimes\cC(X,Y\boxtimes Z),
	\]
	and its unit is the representable module $\cC(-,E)$.
	This coend exists thanks to the assumptions that $\cC$ is small and $\cV_0$ is cocomplete, and the resulting tensor product enjoys a nice universal property~\cite[Lem.~9.8.2]{RichterCategoriesHomotopy}.
	Completeness of $\cV_0$ similarly gives the ends defining the internal homs,
	\begin{align}
		\label{eq:day_convolution_internal_hom}
		\begin{split}
			(\mathcal{F}\Rightarrow\mathcal{G})(X)
			& =
			\int_{Y\in\cC} \bigl(\mathcal{F}(Y)\Rightarrow\mathcal{G}(X\boxtimes Y)\bigr) \\[3pt]
			& \cong
			[\cC^\op,\cV](\mathcal{F},\mathcal{G}(X\boxtimes -)).
		\end{split}
	\end{align}
	Naturally, all the same statements hold for left modules.
\end{remark}

\subsection{Morita equivalence and examples of enriched module categories}

This subsection is not essential for our main results.
It provides some relevant background material for the applications to operator systems in \cref{operator_systems} as well as some first examples for the general theory.

\begin{definition}[{e.g.~\cite[\S 5.5]{Kelly} and~\cite{LindnerMorita}}]
	Two small $\cV$-categories $\cC$ and $\cD$ are called \textbf{Morita equivalent} if their $\cV$-categories of right modules are equivalent:
	\[
		[\cC^\op,\cV] \simeq [\cD^\op,\cV].
	\]
\end{definition}

This implies an equivalence of the underlying ordinary categories $\Mod_\cC \simeq \Mod_\cD$, but is strictly stronger in general.
Under the usual smallness hypotheses, Morita equivalence admits a characterization in terms of the \textbf{Cauchy completion} $\widehat{\cC}$,
which can be defined, up to equivalence, as the full $\cV$-subcategory of $[\cC^\op,\cV]$ spanned by the closure of the representables under absolute weighted colimits~\cite[Thm.~2.13 and Cor.~2.14]{StewartCauchyCompletion}.\footnote{If this closure is not essentially small, then the same construction can be used in a larger universe, and the Cauchy-completion criterion below is then a statement in that larger setting.}
Then Morita equivalence is equivalent to the equivalence of Cauchy completions~\cite[Cor.~2.7]{StewartCauchyCompletion}:
\[
	[\cC^\op,\cV] \simeq [\cD^\op,\cV]
	\qquad\Longleftrightarrow\qquad
	\widehat{\cC} \simeq \widehat{\cD}.
\]
Since $\widehat{\cC}$ is its own Cauchy completion, it is Morita equivalent to $\cC$.
More precisely, restricting modules along the inclusion $\cC \hookrightarrow \widehat{\cC}$ induces an equivalence of $\cV$-categories
\[
	[\widehat{\cC}^\op,\cV] \simeq [\cC^\op,\cV].
\]

\begin{example}[See~\cite{Lawvere}]
	Let $\cV_0$ be Lawvere's category of extended non-negative reals, where objects are the numbers $[0, \infty]$, there is a unique morphism $r \to s$ if and only if $r \geq s$, and the monoidal structure is given by addition with $0$ as the monoidal unit.
	The closed structure is given by $r \Rightarrow s = |s - r|_+ \coloneqq \max\{0, s - r\}$ with the extra convention $\infty - \infty = 0$.
	This makes $\cV$ a B\'enabou cosmos.
	Then a $\cV$-category is a Lawvere metric space, i.e.~a set $X$ together with a function $d : X \times X \to [0, \infty]$
	satisfying the triangle inequality and $d(x, x) = 0$ for all $x \in X$.
	Its underlying ordinary category is the preordered set of points of $X$ ordered by $x \leq y$ if and only if $d(x, y) = 0$.

	A right $X$-module is then a function $F : X \to [0, \infty]$ satisfying the Lipschitz condition
	\[
		F(x) \leq F(y) + d(x, y) \qquad \forall x, y \in X.
	\]
	The category of right $X$-modules $\Mod_X$ is then the poset of such functions where an arrow $F \to G$ exists if and only if $F(x) \geq G(x)$ for all $x \in X$.
	Since $\cV$ is a B\'enabou cosmos, $\Mod_X$ can also be upgraded to the Lawvere metric space $[X^\op, \cV]$,
	whose hom-objects are given by the distances
	\[
		d(F, G) = \sup_{x \in X} |G(x) - F(x)|_+.
	\]
	If $X$ is a metric space in the usual sense, then the Cauchy completion $\widehat{X}$ of $X$ as a $\cV$-category is precisely the Cauchy completion of $X$ in the usual metric sense.
\end{example}

The following class of examples will be useful for our considerations on operator systems in \cref{operator_systems}.

\begin{remark}
\label{ex:mor}
	For general $\cV$ and $\cC$,
	we write $\cC_E$ for the full $\cV$-subcategory of $\cC$ on a single object $E$.
	Suppose that every other object of $\cC$ is an absolute weighted colimit of a functor landing in $\cC_E$.
	Then $\cC$ is Morita equivalent to $\cC_E$,
	with the equivalence given at the level of modules by
	\begin{align}
		\label{eq:morita_equivalence}
		\begin{split}
			[\cC^\op, \cV] & \stackrel{\simeq}{\longrightarrow} [\cC_E^\op, \cV] \\[2pt]
			\mathcal{F} & \longmapsto \mathcal{F}(E),
		\end{split}
	\end{align}
	where we slightly abuse notation by writing $\mathcal{F}(E)$ for the restriction of $\mathcal{F}$ to the single-object subcategory $\cC_E$.

	If, in addition, the unit morphism $j_E : I \to \cC(E,E)$ is an isomorphism, then we have
	\begin{equation}
		\label{eq:morita_equivalence_EE_is_I}
		[\cC^\op, \cV] \simeq
		[\cC_E^\op, \cV] \simeq \cV.
	\end{equation}
	Indeed a $\cC_E$-module is given by an object $V \in \cV_0$ together with a morphism
	\begin{equation}
		\label{eq:action}
		V \otimes \cC(E, E) \longrightarrow V
	\end{equation}
	satisfying the usual associativity and unitality conditions.
	But the unitality condition alone forces this morphism to be the one which makes the diagram
	\[
		\begin{tikzcd}[column sep=small]
			V \otimes \cC(E, E) \ar[rr] & & V \\
					& V \otimes I \ar[ul, "\id \otimes j_E"] \ar[ur, "\cong"'] &
		\end{tikzcd}
	\]
	commute.
	Thus under the identification of $\cC(E, E)$ with $I$ via $j_E$, the action morphism~\eqref{eq:action} is forced to be the coherence isomorphism $V \otimes I \cong V$.
	In this way, we obtain an equivalence of $\cV$-categories $[\cC_E^\op, \cV] \simeq \cV$.

	Still under the hypothesis that $j_E$ is an isomorphism, we can also describe the inverse equivalence explicitly.
	While the direction $[\cC^\op, \cV] \simeq [\cC_E^\op, \cV]$ is restriction as above,
	its inverse sends an object $V \in \cV_0$ to the right $\cC$-module
	\[
		V \otimes \cC(-, E),
	\]
	with action induced by composition in $\cC$ in the obvious way.
	To see that this is the inverse equivalence, it is enough to evaluate at $E$, where we indeed get $V \otimes \cC(E, E) \cong V$.

	Analogous statements apply to left modules, but with $\cC(-, E)$ replaced by $\cC(E, -)$.
\end{remark}

The following two examples prepare us for the applications to operator systems in \cref{operator_systems}, where similar phenomena will occur.

\begin{example}
	\label{ex:vect_modules}
	We continue \cref{ex:ab_modules,ex:co_yoneda_ab} with $\cV_0 = \Ab$.
	Then on single-object $\Ab$-categories, Morita equivalence reduces to the basic notion of Morita equivalence of rings.
	The Cauchy completion of $R$ as a single-object category is the $\Ab$-category of finitely generated projective right $R$-modules.
	Indeed it is a standard fact that two rings are Morita equivalent if and only if their categories of finitely generated projective modules are equivalent.


	Consider in particular the case that $R = K$ is a field.
	Then as an instance of \eqref{eq:morita_equivalence}, we obtain that the forgetful functor
	\begin{align*}
		\Mod_{\fdVect_K} & \longrightarrow \Vect_K \\[2pt]
		\mathcal{F} & \longmapsto \mathcal{F}(K)
	\end{align*}
	is an equivalence, where $\fdVect_K$ is the category of finite-dimensional vector spaces over $K$.\footnote{Technically, we need to assume a small version of the usual category of finite-dimensional vector spaces, as our $\cV$-category $\cC$ is assumed to be small throughout.}
	More generally, if $\cC \subseteq \fdVect_K$ is any full subcategory containing $K$, then its Cauchy completion is $\fdVect_K$ again,
	and hence the forgetful functor
	\begin{align*}
		\Mod_\cC & \longrightarrow \Vect_K \\[2pt]
		\mathcal{F} & \longmapsto \mathcal{F}(K)
	\end{align*}
	is an equivalence.
\end{example}

\begin{example}
	Let $\OrdVect$ be the category of ordered vector spaces, that is vector spaces $V$ over $\R$ equipped with a salient generating convex cone $V_+ \subseteq V$,
	where salient and generating mean, respectively,
	\[
		V_+ \cap (-V_+) = \{0\}, \qquad V = V_+ - V_+.
	\]
	Morphisms between ordered vector spaces are positive linear maps.
	Given ordered vector spaces $V$ and $W$, we equip the vector space $V \otimes W$ with the projective positive cone~\cite[1.12]{WittstockOrderedTensor},
	\[
		(V \otimes W)_+ \coloneqq \left\{ \sum_i v_i \otimes w_i : v_i \in V_+, \, w_i \in W_+ \right\},
	\]
	This projective cone is salient~\cite[Theorem~4.8]{WortelLexicographicCones}, and it is generating because $V_+$ and $W_+$ are generating.
	Equipped with this cone, $V \otimes W$ enjoys the usual universal property with respect to positive bilinear maps.
	Thus the obvious coherence isomorphisms indeed make $\OrdVect$ into a symmetric monoidal category.\footnote{Abstractly, the reason is that positive multilinear maps are the morphisms of a symmetric multicategory, and this multicategory is representable by the universal property of $V \otimes W$; see~\cite{HermidaRepresentableMulticategories} for representable multicategories.}
	The monoidal unit object is $\R$ with the usual positive cone $\R_+$.

	We next argue that $\OrdVect$ is monoidal closed.
	To this end, call a linear map $f \colon V \to W$ \textbf{regular} if it can be written as a difference of positive linear maps~\cite[\S 1.4]{AliprantisTourky}.
	Then we define $V \Rightarrow W$ to be the vector space of regular linear maps $V \to W$, equipped with the positive cone of positive linear maps.
	This cone is salient: if $f : V \to W$ is positive and such that $-f$ is also positive, then we must have $f(v) = 0$ for all $v \in V_+$ because $W_+$ is salient,
	and hence $f = 0$ since $V_+$ generates $V$.
	The tensor-hom adjunction
	\[
		\OrdVect(U \otimes V, W) \,\cong\, \OrdVect(U, V \Rightarrow W)
	\]
	follows from the universal property of the projective cone: positive maps $U \otimes V \to W$ are equivalently positive bilinear maps $U \times V \to W$, and these are equivalently positive maps $U \to (V \Rightarrow W)$, using that $U_+$ generates $U$.

	For completeness and cocompleteness, the required basic constructions are as follows.
	Products are computed on the underlying vector spaces with the componentwise cone.
	For equalizers, if $f,g \colon V \to W$ are positive maps, define
	\[
		E_+ \coloneqq \{v \in V_+ : f(v)=g(v)\}, \qquad E \coloneqq E_+ - E_+.
	\]
	Then $E$, equipped with positive cone $E_+$ and the evident inclusion $E \hookrightarrow V$, is the equalizer of $f$ and $g$.
	Indeed, every positive map into $V$ equalizing $f$ and $g$ sends positive elements into $E_+$, and hence factors uniquely through $E$ because the cone in its domain is generating.
	Coproducts are direct sums with the cone generated by the summand cones.
	For the coequalizer of positive maps $f,g \colon V \to W$, let
	\[
		q_0 \colon W \longrightarrow Q_0 \coloneqq W / \mathrm{span}\{f(v)-g(v):v\in V\}
	\]
	be the ordinary vector space quotient, and equip $Q_0$ with the image wedge $q_0(W_+)$.
	This wedge is generating but need not be salient, so one quotients further by its lineality space
	\[
		L \coloneqq q_0(W_+) \cap (-q_0(W_+)).
	\]
	Then $Q \coloneqq Q_0/L$, equipped with the image of $q_0(W_+)$, is an ordered vector space, and the composite $W \to Q_0/L$ is the coequalizer of $f$ and $g$.
	Indeed, any positive map $h \colon W \to Z$ with $hf = hg$ factors through $Q_0$; since $Z_+$ is salient, this factorization kills $L$,
	and therefore factors uniquely through $Q_0/L$.
	Since small limits and colimits can be constructed from products and equalizers, respectively from coproducts and coequalizers, $\OrdVect$ is complete and cocomplete.
	Thus $\OrdVect$ is a B\'enabou cosmos.

	For $\cC$, consider for example the full subcategory of $\OrdVect$ on the objects $\R^n$ with the usual positive cone $\R^n_+$,
	or equivalently the full subcategory consisting of all finite-dimensional ordered vector spaces with simplex cones.
	Then every object of $\cC$ is a finite biproduct of the monoidal unit $\R$, and biproducts are absolute colimits.
	Thus by~\eqref{eq:morita_equivalence_EE_is_I}, $\Mod_\cC$ is equivalent to $\OrdVect$ again.
\end{example}

\subsection{Dual modules} 

From now on, we assume that $\cV$ is a B\'enabou cosmos together with a chosen designated object, which we denote by $\bot \in \cV_0$.
Using the internal hom, this determines a contravariant functor
\begin{align*}
	\neg \: \colon \cV_0^\op & \longrightarrow \cV_0, \\
	A & \longmapsto A \Rightarrow \bot.
\end{align*}

\begin{remark}
	\label{rem:negation}
	\begin{enumerate}
		\item\label{rem:negation_heyting}
			Our notation of $\bot$ and $\neg$ is inspired by the following class of examples,
			which shows that the relation to intuitionistic logic goes beyond mere analogy.
			If $\cV_0$ is a thin category, that is a category where there is at most one morphism between any two objects,
			then $\cV_0$ is equivalently a preordered set, and the requirement for $\cV$ to be a B\'enabou cosmos with respect to meets $\land$ as the symmetric monoidal structure
			is equivalent to it being a complete Heyting algebra.
			Then the monoidal unit element is the top element $\top$.
			If we choose the bottom element for $\bot$, then $\neg A$ is the usual negation of $A$ in the Heyting algebra.
			Just like in our framework, double negations play a central role in the theory of Heyting algebras.
		\item\label{rem:negation_functor}
			The functor $\neg$ is in fact canonically a $\cV$-functor
				\[
					\begin{tikzcd}
						\cV^\op \ar[r, "\neg"] & \cV.
					\end{tikzcd}
				\]
				Indeed, the internal hom extends to a $\cV$-functor $\cV^\op \otimes \cV \to \cV$ which sends $(A,B)$ to $A\Rightarrow B$, and fixing the second variable at $\bot$ gives $\neg$; see~\cite[\S 1.6]{Kelly}.
				More concretely, the $\cV$ functor structure is given by the morphisms
				\[
					\begin{tikzcd}
						\left( B\Rightarrow A \right) \ar[r] & \left( \neg A\Rightarrow\neg B \right)
					\end{tikzcd}
				\]
				obtained by transposing the composition morphism
				\[
					\begin{tikzcd}[column sep=large]
						(B\Rightarrow A)\otimes\neg A
						\ar[r, "\cong"]
						&
						\neg A\otimes(B\Rightarrow A)
						\ar[r, "\circ"]
						&
						\neg B.
					\end{tikzcd}
				\]
				Thus, on underlying ordinary morphisms, this $\cV$-functor sends $f\colon B\to A$ to the precomposition morphism $\neg f\colon\neg A\to\neg B$ described above.
				Its compatibility with composition and identities follows from the corresponding properties of composition in the self-enrichment $\cV$.
			\item The $\cV$-functor $\neg$ is part of an enriched adjunction
				\begin{equation}
					\label{neg_enriched_adjunction}
					\neg^\op \dashv \neg.
				\end{equation}
				Indeed, the tensor-hom adjunction and the symmetry give isomorphisms
				\[
					\cV^\op(\neg A,B)
					=
					\bigl(B\Rightarrow\neg A\bigr)
					\cong
					\bigl(A\otimes B\Rightarrow\bot\bigr)
					\cong
					\bigl(A\Rightarrow\neg B\bigr)
					=
					\cV(A,\neg B)
				\]
				which are $\cV$-natural in $A$ and $B$, and hence exhibit the stated enriched adjunction~\cite[\S 1.11]{Kelly}.
				Upon restricting to the underlying ordinary categories, we in particular obtain that $\cV_0$ is a \emph{dialogue category} in the sense of Melli\`es~\cite[Section~4]{MelliesDialogue}.

			For every object $A \in \cV_0$, the identity morphism $\id_{\neg A} \in \cV_0(\neg A,\neg A)$ corresponds under the tensor-hom adjunction to the morphism
			\[
				\ev_{A,\bot} \colon \neg A \otimes A \to \bot,
			\]
			which we call the \textbf{pairing morphism}.
			Specializing~\eqref{internalhom_explicit} to this case means that the tensor-hom correspondence is implemented by
			\begin{align}
				\label{pairing_explicit}
				\begin{split}
					\cV_0(B,\neg A) & \longrightarrow \cV_0(B \otimes A,\bot), \\
					f & \longmapsto \ev_{A,\bot} (f \otimes \id_A).
				\end{split}
			\end{align}
			For any $\varphi : B \otimes A \to \bot$,
			we call its preimage on the left the \textbf{name} of $\varphi$ and denote it by\footnote{This is the terminology and notation of categorical logic~\cite[p.~165]{MacLaneMoerdijkSheaves}.}
			\[
				\name{\varphi} \: : \: B \longrightarrow \neg A.
			\]
			For example, the name of the pairing morphism $\ev_{A,\bot} : \neg A \otimes A \to \bot$ is by definition $\id_{\neg A}$.
			For $A = I$, we can choose $\neg I = \bot$ and assume without loss of generality that
			\begin{equation}
				\label{evI}
				\ev_{I,\bot} \: : \: \neg I \otimes I \longrightarrow \bot
			\end{equation}
			is the unitor $\bot \otimes I \cong \bot$, and we will make this assumption from now on.

			A first use case of pairing morphisms and names is that they let us describe the action of $\neg : \cV_0^\op \to \cV_0$ on morphisms.
			For $f : A \to B$, its dual $\neg f$ is by definition the unique morphism $\neg B \to \neg A$ such that the diagram
			\begin{equation}
				\label{negf}
				\begin{tikzcd}[column sep=large,row sep=large]
					\neg B \otimes A \ar[d, "\id_{\neg B} \otimes f"'] \ar[r, "\neg f \otimes \id_A"] & \neg A \otimes A \ar[d, "\ev_{A,\bot}"] \\
					\neg B \otimes B \ar[r, "\ev_{B,\bot}"] & \bot
				\end{tikzcd}
			\end{equation}
			commutes, or equivalently
			\[
				\neg f = \name{\ev_{B,\bot} (\id_{\neg B} \otimes f)}.
			\]

			On the other hand, if we first apply a braiding to get $A \otimes \neg A \to \bot$, 
			then the name of this morphism is a canonical morphism
				\begin{equation}
					\label{eta_double_dual}
					\eta_A \: : \: A \longrightarrow \neg\neg A,
				\end{equation}
				which is precisely the component at $A$ of the unit of the enriched adjunction~\eqref{neg_enriched_adjunction}, since both correspond to $\id_{\neg A}$ under the adjunction.
				In particular, $\eta$ is $\cV$-natural.
	\end{enumerate}
\end{remark}

\begin{example}
	Continuing from \cref{ex:ab_modules}, in $\Ab$ it is natural to work with $\bot = \Q/\Z$.
	Then $\neg A \coloneqq \Ab(A, \Q/\Z)$ is the usual dual group of any abelian group $A$.
\end{example}

\begin{lem}
	\label{lem:neg_colimits_to_limits}
	The contravariant functor $\neg \colon \cV_0^\op \to \cV_0$
	sends colimits in $\cV_0$ to limits in $\cV_0$.
\end{lem}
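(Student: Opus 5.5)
The plan is to deduce this from the adjunction $\neg^\op \dashv \neg$ of \cref{rem:negation}, or more directly from the tensor-hom adjunction, because right adjoints preserve limits. I will work with the underlying ordinary adjunction. For $A, B \in \cV_0$, the tensor-hom adjunction~\eqref{internalhom} together with the symmetry gives bijections
\[
	\cV_0(B, \neg A) \cong \cV_0(B \otimes A, \bot) \cong \cV_0(A \otimes B, \bot) \cong \cV_0(A, \neg B),
\]
and these are natural in both variables. Read as $\cV_0^\op(\neg B, A) \cong \cV_0(A, \neg B)$, this says that the functor $\neg^\op \colon \cV_0 \to \cV_0^\op$ is left adjoint to $\neg \colon \cV_0^\op \to \cV_0$. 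Being a right adjoint, $\neg \colon \cV_0^\op \to \cV_0$ preserves all limits that exist in $\cV_0^\op$. A limit in $\cV_0^\op$ is the same as a colimit in $\cV_0$, so $\neg$ carries colimits in $\cV_0$ to limits in $\cV_0$, which is the claim.

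To be self-contained, I will also verify the statement directly. Let $D \colon \cJ \to \cV_0$ be a diagram with colimit cocone $\lambda_j \colon D(j) \to L$. I need to show that the cone $\neg\lambda_j \colon \neg L \to \neg D(j)$ is a limit cone. For each test object $B$, the Yoneda argument reduces this to checking that the map
\[
	\cV_0(B, \neg L) \longrightarrow \lim_j \cV_0(B, \neg D(j))
\]
is a bijection. Under the natural bijections above, this map becomes
\[
	\cV_0(L, \neg B) \longrightarrow \lim_j \cV_0(D(j), \neg B),
\]
given by precomposition with the $\lambda_j$. That map is a bijection by the universal property of the colimit $L$.

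The only point needing care is naturality. I must make sure that, under the identifications, postcomposition with $\neg\lambda_j$ corresponds to precomposition with $\lambda_j$. This follows from the defining square~\eqref{negf} for $\neg f$ together with naturality of the tensor-hom bijection~\eqref{pairing_explicit} in $A$: precomposing $\ev_{L,\bot}(g \otimes \id)$ with $\id \otimes \lambda_j$ equals $\ev_{D(j),\bot}((\neg\lambda_j \circ g) \otimes \id)$. I expect this bookkeeping to be the only mild obstacle; the remainder is formal.
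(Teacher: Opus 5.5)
Your proposal is correct and is essentially the paper's argument: both obtain $\neg^\op \dashv \neg$ from the tensor-hom adjunction and the symmetry. The paper says the left adjoint $\neg^\op$ preserves colimits, while you say the right adjoint $\neg$ preserves limits of $\cV_0^\op$, which is the same statement. One small slip: the adjunction should be read as $\cV_0^\op(\neg A, B) \cong \cV_0(A, \neg B)$, because the identity $\cV_0^\op(\neg B, A) = \cV_0(A, \neg B)$ you wrote holds by definition of the opposite category and expresses no adjunction.
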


\begin{proof}
	The enriched adjunction~\eqref{neg_enriched_adjunction} restricts to an adjunction $\neg^\op \dashv \neg$ between the underlying ordinary categories.
	It follows that $\neg^\op$ preserves colimits.
	Reinterpreting this statement in $\cV_0$ rather than in $\cV_0^\op$, this means exactly that $\neg$ sends colimits in $\cV_0$ to limits in $\cV_0$.
\end{proof}

\begin{lem}
	\label{lem:neg_op_module}
	Let $\mathcal{F}$ be a left $\cC$-module.
	Then the pointwise dualization $(\neg \mathcal{F})(X) \coloneqq \neg\mathcal{F}(X)$
	carries a canonical right $\cC$-module structure.
\end{lem}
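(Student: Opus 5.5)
The cleanest route is to use the reformulation from \cref{rem:enriched_functor_category}: a left $\cC$-module $\mathcal{F}$ is the same thing as a $\cV$-functor $\mathcal{F} \colon \cC \to \cV$, and a right $\cC$-module is the same thing as a $\cV$-functor $\cC^\op \to \cV$. By \cref{rem:negation}\ref{rem:negation_functor}, $\neg$ is canonically a $\cV$-functor $\cV^\op \to \cV$. The $\cV$-functor $\mathcal{F}$ induces a $\cV$-functor $\mathcal{F}^\op \colon \cC^\op \to \cV^\op$, with the same hom-morphisms $\cC(X,Y) = \cC^\op(Y,X) \to (\mathcal{F}(X) \Rightarrow \mathcal{F}(Y)) = \cV^\op(\mathcal{F}(Y), \mathcal{F}(X))$. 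The composite $\neg \circ \mathcal{F}^\op \colon \cC^\op \to \cV$ is then a $\cV$-functor sending $X \mapsto \neg\mathcal{F}(X)$. Composites of $\cV$-functors are $\cV$-functors, so this gives the desired right module structure.

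To make the action explicit, which matters later for proof relevance, I would describe it directly. The morphism
\[
	(\neg\mathcal{F})_{X,Y} \colon \neg\mathcal{F}(Y) \otimes \cC(X,Y) \longrightarrow \neg\mathcal{F}(X)
\]
is defined as the name of the composite
\[
	\neg\mathcal{F}(Y) \otimes \cC(X,Y) \otimes \mathcal{F}(X) \xrightarrow{\id \otimes \mathcal{F}_{X,Y}} \neg\mathcal{F}(Y) \otimes \mathcal{F}(Y) \xrightarrow{\ev} \bot.
\]
That is, $(\neg\mathcal{F})_{X,Y}$ is the unique morphism whose pairing with $\mathcal{F}(X)$, in the sense of~\eqref{pairing_explicit}, equals "act, then evaluate". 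I would then check that this agrees with transposing $\cC(X,Y) \to (\mathcal{F}(X) \Rightarrow \mathcal{F}(Y))$ followed by the structure map $(\mathcal{F}(X)\Rightarrow\mathcal{F}(Y)) \to (\neg\mathcal{F}(Y) \Rightarrow \neg\mathcal{F}(X))$ of $\neg$, so that the two descriptions coincide.

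If one prefers to verify the axioms by hand rather than cite functoriality of composites, the plan is as follows. By the uniqueness part of the tensor-hom correspondence, two morphisms into $\neg\mathcal{F}(X)$ are equal as soon as their pairings with $\mathcal{F}(X)$ agree. For associativity, both sides of the square, after tensoring with $\mathcal{F}(X)$ and evaluating, reduce (using the defining property of names twice) to
\[
	\neg\mathcal{F}(Z) \otimes \cC(Y,Z) \otimes \cC(X,Y) \otimes \mathcal{F}(X) \to \bot,
\]
obtained either by acting twice or by composing first and then acting. These agree by the associativity axiom of $\mathcal{F}$. Unitality reduces in the same way to the unit axiom of $\mathcal{F}$.

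The main obstacle is bookkeeping rather than ideas: keeping track of the symmetry isomorphisms that enter when $\cC(X,Y)$ is moved past $\neg\mathcal{F}(Y)$. This is needed because the left action has $\cC(X,Y)$ on the left while the right action has it on the right. One also has to confirm that naming commutes with precomposition by morphisms of the form $g \otimes \id_{\mathcal{F}(X)}$, which is the naturality of~\eqref{pairing_explicit} in $B$. Once these two facts are in place, each axiom is a one-line diagram chase.
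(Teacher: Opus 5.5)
Your proposal is correct and takes essentially the same route as the paper: view $\mathcal{F}$ as a $\cV$-functor $\cC \to \cV$, compose $\mathcal{F}^\op$ with the $\cV$-functor $\neg \colon \cV^\op \to \cV$, and describe the resulting action explicitly as the name of ``act, then evaluate''. Your optional hand-verification of associativity and unitality, via uniqueness of names, is a valid supplement to the paper's appeal to the fact that a composite of $\cV$-functors is a $\cV$-functor.
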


\begin{proof}
	By the discussion in \cref{rem:enriched_functor_category}, the left $\cC$-module $\mathcal{F}$ is equivalently a $\cV$-functor $\mathcal{F}\colon\cC\to\cV$.
	Passing to opposites and then composing with the $\cV$-functor $\neg$ from \cref{rem:negation}\ref{rem:negation_functor} gives a $\cV$-functor
	\[
		\begin{tikzcd}
			\cC^\op \ar[r, "\mathcal{F}^\op"] & \cV^\op \ar[r, "\neg"] & \cV.
		\end{tikzcd}
	\]
	Such a $\cV$-functor is precisely a right $\cC$-module, and its value at $X$ is $\neg\mathcal{F}(X)$.

	To describe its action concretely, write
	$\mathcal{F}_{X,Y}\colon\cC(X,Y)\otimes\mathcal{F}(X)\to\mathcal{F}(Y)$
	for the action of $\mathcal{F}$.
	Unwinding the enriched functor $\neg$, the action of $\neg\mathcal{F}$ is the morphism
	\[
		(\neg\mathcal{F})_{X,Y}\colon
		\neg\mathcal{F}(Y)\otimes\cC(X,Y)
		\longrightarrow
		\neg\mathcal{F}(X)
	\]
	characterized under the tensor-hom adjunction by the composite
	\[
		\begin{tikzcd}[column sep=large]
			\neg\mathcal{F}(Y)\otimes\cC(X,Y)\otimes\mathcal{F}(X)
			\ar[r, "\id\otimes\mathcal{F}_{X,Y}"]
			&
			\neg\mathcal{F}(Y)\otimes\mathcal{F}(Y)
			\ar[r, "\ev_{\mathcal{F}(Y),\bot}"]
			&
			\bot.
		\end{tikzcd}
	\]
	This is simply precomposition with the action of $\mathcal{F}$.
	Associativity and unitality now follow from the fact that the displayed composite of $\cV$-functors is again a $\cV$-functor.
\end{proof}

\begin{example}
	For $\cV_0 = \Ab$ and $\cC$ a ring $R$ considered as a one-object $\Ab$-category,
	the construction of \Cref{lem:neg_op_module} sends a left $R$-module $M$ to the right $R$-module
	\[
		\neg M \coloneqq \Ab(M, \Q/\Z),
	\]
	where the right action is defined by precomposition with the left action of $R$ on $M$,
	which we can write out as
	\begin{equation}
		\label{Ab_dual_module}
		(\varphi \cdot r)(m) \coloneqq \varphi(r \cdot m)
		\qquad \forall m \in M, r \in R, \varphi \in \neg M.
	\end{equation}
	This is a standard definition.
\end{example}

\begin{corollary}
	\label{cor:corepresentable_module}
	For every object $W \in \cC$, the pointwise dual $\neg \cC(W, -)$ is a right $\cC$-module, with action morphisms obtained by taking the composition morphisms
	\[
		\cC(Y,Z) \otimes \cC(W,Y)
		\longrightarrow
		\cC(W,Z)
	\]
	and dualizing $\cC(W,Z)$ to the left and then $\cC(W,Y)$ to the right.
\end{corollary}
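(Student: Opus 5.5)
This is an instance of \cref{lem:neg_op_module} applied to the representable left module $\mathcal{F} = \cC(W,-)$. So the plan is to check that this is a left $\cC$-module, apply the lemma, and then unwind its explicit description of the action.

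First, I record that $\cC(W,-)$, with $\mathcal{F}(X) = \cC(W,X)$ and action $\mathcal{F}_{Y,Z} \colon \cC(Y,Z)\otimes\cC(W,Y)\to\cC(W,Z)$ given by composition, is a left $\cC$-module. The associativity square of \cref{def:module} is exactly associativity of composition in $\cC$. The unitality triangle is exactly the left unit law $\circ\,(j_Y\otimes\id) \cong \id$. This is the representable left module already mentioned after \cref{def:module}, so nothing new is needed.

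Second, I invoke \cref{lem:neg_op_module}. It gives a canonical right $\cC$-module $\neg\cC(W,-)$. Its action $\neg\cC(W,Z)\otimes\cC(Y,Z)\to\neg\cC(W,Y)$ is characterized, under the tensor-hom adjunction~\eqref{pairing_explicit}, as the name of the composite
\[
	\neg\cC(W,Z)\otimes\cC(Y,Z)\otimes\cC(W,Y)\xrightarrow{\id\otimes\circ}\neg\cC(W,Z)\otimes\cC(W,Z)\xrightarrow{\ev}\bot .
\]

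Finally, I read this description as the one in the statement. Starting from the composition morphism $\cC(Y,Z)\otimes\cC(W,Y)\to\cC(W,Z)$, one first moves $\cC(W,Z)$ to the left as $\neg\cC(W,Z)$, pairing it off against the output via $\ev$, which produces the displayed map to $\bot$. One then transposes $\cC(W,Y)$ to the right as $\neg\cC(W,Y)$ by taking the name.

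The only point needing care is bookkeeping of tensor factor order and symmetry. The domain in \cref{lem:neg_op_module} is written $\neg\mathcal{F}(Z)\otimes\cC(Y,Z)$, matching the right-module convention $\mathcal{F}(Z)\otimes\cC(Y,Z)\to\mathcal{F}(Y)$, and names are taken with respect to the last factor $\cC(W,Y)$. So no braiding beyond what is already in the lemma is required.

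I do not expect any genuine obstacle. Associativity and unitality of the resulting action are inherited from the lemma, and ultimately from $\neg$ being a $\cV$-functor.
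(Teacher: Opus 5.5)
Your proposal is correct and follows the paper's own proof: apply \cref{lem:neg_op_module} to the representable left module $\cC(W,-)$, whose action is composition, and read the action of $\neg\cC(W,-)$ off the explicit description given in that lemma's proof. Your extra checks of the module axioms for $\cC(W,-)$ and your remarks on the order of tensor factors are fine, though the paper takes these for granted.
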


We will call a module of this form \textbf{corepresentable}, where $W$ is the corepresenting object.

\begin{proof}
	The covariant representable $\cC(W,-)$ is a left $\cC$-module, with action morphisms given by composition in $\cC$.
	Hence the claim follows from \Cref{lem:neg_op_module} and the form of the action morphisms constructed in its proof.
\end{proof}

\begin{example}
	\label{ab_corep}
	Consider again $\Ab$ with $\bot = \Q/\Z$ and the one-object $\Ab$-category corresponding to a ring $R$.
	Then up to isomorphism, the only corepresentable right $R$-module is
	\[
		R^\vee \coloneqq \neg R = \Ab(R, \Q/\Z),
	\]
	where the right $R$-action is given by precomposition with the left action of $R$ on itself as in~\eqref{Ab_dual_module}.
\end{example}

\subsection{Regular bigenerators}
\label{sec:regular_bigenerators}

The following notion will play a key role in our main results.

\begin{definition}
	\label{def:regular_bigenerator}
	An object $\bot \in \cV_0$ is a \textbf{regular bigenerator} if the following two conditions hold for every $A \in \cV_0$:
	\begin{enumerate}
		\item the canonical morphism
			\begin{equation}
				\label{sharp_cond}
				\sharp_A \: \colon A \longrightarrow \prod_{\varphi \in \cV_0(A,\bot)} \bot,
			\end{equation}
			whose component at $\varphi$ is $\varphi \colon A \to \bot$ itself, is a regular monomorphism;
		\item the canonical morphism
			\begin{equation}
				\label{flat_cond}
				\flat_A \: \colon \coprod_{\varphi \in \cV_0(A,\bot)} I \longrightarrow \neg A,
			\end{equation}
			whose cocomponent at $\varphi$ is the name $\name{\varphi} \colon I \to \neg A$ of $\varphi$, is a regular epimorphism.
	\end{enumerate}
\end{definition}

\begin{example}
	\label{ab_cogen}
	In the case $\cV_0 = \Ab$, all monomorphisms and epimorphisms are regular because $\Ab$ is an abelian category.
	Then it is straightforward to see that the first condition for a regular bigenerator holds if and only if $\bot$ is a cogenerator~\cite[Section~3.3]{FreydAbelian}.
	The second condition holds for every abelian group as $\bot$: with $\neg A$ being the group of homomorphisms $A \to \bot$, we obtain that
	\[
		\flat_A \: \colon \coprod_{\varphi \in \Ab(A,\bot)} \Z \longrightarrow \neg A
	\]
	is the canonical surjection from the free abelian group generated by $\Ab(A,\bot)$ onto $\Ab(A,\bot)$ itself.
	Overall, we conclude that the regular bigenerators in $\Ab$ are precisely the cogenerators in the usual sense of an abelian category.
	The standard example is $\bot = \Q / \Z$.
\end{example}

Throughout the rest of this section, we will take $\cV$ to be a B\'enabou cosmos together with a distinguished object $\bot \in \cV_0$, which we often assume to be a regular bigenerator.

\begin{lem}
	\label{lem:neg_monos}
	For any object $A \in \cV_0$, consider the canonical morphism $\eta_A \colon A \to \neg\neg A$.
	\begin{enumerate}
		\item If $\bot$ is a coseparator, then $\eta_A$ is a monomorphism.
		\item If $\bot$ is a regular bigenerator, then $\eta_A$ is a regular monomorphism.
	\end{enumerate}
\end{lem}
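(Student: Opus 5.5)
The idea is to factor the canonical map $\sharp_A$ of \eqref{sharp_cond} through $\eta_A$. Concretely, we aim for a morphism $\rho \colon \neg\neg A \to \prod_{\varphi \in \cV_0(A,\bot)} \bot$ with $\sharp_A = \rho \circ \eta_A$. Once this is available, both parts follow from standard cancellation facts. If $\sharp_A$ is a monomorphism, then so is $\eta_A$. If $\sharp_A$ is a regular monomorphism, then $\eta_A$ is a regular monomorphism as well, provided we argue a bit more carefully as explained below. Here we read ``$\bot$ is a coseparator'' as the statement that the morphisms $A \to \bot$ are jointly monic, that is, that $\sharp_A$ is a monomorphism for every $A$.

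To construct $\rho$, we use $\neg\neg A = \neg A \Rightarrow \bot$. For each $\varphi \in \cV_0(A,\bot)$, precompose with the name $\name{\varphi} \colon I \to \neg A$. This gives $\neg\name{\varphi} \colon \neg\neg A \to \neg I = \bot$, using the convention $\neg I = \bot$ with \eqref{evI}. These components assemble into $\rho$. Equivalently, $\rho = \neg \flat_A$ after identifying $\neg \coprod_\varphi I \cong \prod_\varphi \neg I = \prod_\varphi \bot$ via \cref{lem:neg_colimits_to_limits}. The key check is $\neg\name{\varphi} \circ \eta_A = \varphi$. Since $\eta_A$ is the name of the braided pairing, this amounts to a diagram chase with \eqref{negf} and \eqref{pairing_explicit}. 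Evaluating the pairing of $\eta_A(a)$ against $\name{\varphi}$ should return $\ev(\name{\varphi}\otimes \id_A) = \varphi$, up to the unitor in \eqref{evI}. This is routine but fiddly with symmetries, and the whole first part then follows.

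For part (b) the main obstacle is that a regular mono $\rho\circ\eta_A$ does not in general make $\eta_A$ regular. The correct general fact is that if $g\circ f$ is a regular monomorphism and $g$ is a monomorphism, then $f$ is regular, and we do not know that $\rho$ is monic. We therefore plan to use the second condition of \cref{def:regular_bigenerator}. Since $\flat_A$ is a regular epimorphism and $\neg^\op$ is a left adjoint, the functor $\neg$ sends it to a regular monomorphism, at least a monomorphism, because $\neg$ turns colimits into limits by \cref{lem:neg_colimits_to_limits}. A coequalizer $\flat_A = \mathrm{coeq}(u,v)$ goes to an equalizer $\neg\flat_A = \mathrm{eq}(\neg u,\neg v)$, so $\rho = \neg\flat_A$ is a regular, in particular monic, morphism. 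The cancellation lemma then finishes the proof.

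To make that lemma explicit, suppose $\sharp_A = \rho\eta_A$ is the equalizer of $p,q$ and $\rho$ is monic. We claim $\eta_A$ is the equalizer of $p\rho$ and $q\rho$. Given $x$ with $p\rho x = q\rho x$, we get $\rho x = \sharp_A y$ for a unique $y$, so $\rho x = \rho\eta_A y$. Since $\rho$ is monic, $x = \eta_A y$, and uniqueness of $y$ follows because $\eta_A$ is monic by part (a). Thus the only nontrivial verifications are the identity $\neg\name{\varphi}\circ\eta_A=\varphi$ and the identification $\rho\cong\neg\flat_A$.
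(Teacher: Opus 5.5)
Your proof is correct and follows essentially the same route as the paper: you factor $\sharp_A = \neg\flat_A \circ \eta_A$ through the components $\neg\name{\varphi}$, and you use that $\neg\flat_A$ is a (regular) monomorphism because $\flat_A$ is a regular epimorphism and $\neg$ turns colimits into limits. Your explicit equalizer-cancellation argument is simply the unpacked form of the paper's observation that the square formed by $\eta_A$, $\sharp_A$ and $\neg\flat_A$ is a pullback and that regular monomorphisms are stable under pullback.
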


\begin{proof}
	\begin{enumerate}
		\item To show that $\eta_A$ is monic, let
			$f,g \colon X \to A$
			be morphisms in $\cV_0$ such that $\eta_A f = \eta_A g$. We must prove that $f=g$.
			Since $\bot$ is a coseparator, it is enough to show that $\varphi f = \varphi g$ for every morphism
			$\varphi \colon A \to \bot$.
			So fix such a $\varphi$.

			Then we first note that the composite
			\[
				\begin{tikzcd}[column sep=large]
					A \ar[r, "\cong"] & A \otimes I \ar[r, "\id_A \otimes \name{\varphi}"] & A \otimes \neg A \ar[r, "\eta_A \otimes \id_{\neg A}"] & \neg\neg A \otimes \neg A \ar[r, "\ev_{\neg A,\bot}"] & \bot.
				\end{tikzcd}
			\]
			is $\varphi$ again.
			Indeed, by definition of $\eta_A$, the final two morphisms compose to the braiding followed by $\ev_{A,\bot}$,
			and hence the claim follows by definition of $\varphi$.

			Now precompose this expression for $\varphi$ with $f$.
			By naturality of the unitor, this gives
			\[
				\varphi f
				\;=\;
				\begin{tikzcd}[column sep=large]
					X \ar[r, "\cong"] & X \otimes I \ar[r, "f \otimes \name{\varphi}"] & A \otimes \neg A \ar[r, "\eta_A \otimes \id_{\neg A}"] & \neg\neg A \otimes \neg A \ar[r, "\ev_{\neg A,\bot}"] & \bot.
				\end{tikzcd}
			\]
			Similarly,
			\[
				\varphi g
				\;=\;
				\begin{tikzcd}[column sep=large]
					X \ar[r, "\cong"] & X \otimes I \ar[r, "g \otimes \name{\varphi}"] & A \otimes \neg A \ar[r, "\eta_A \otimes \id_{\neg A}"] & \neg\neg A \otimes \neg A \ar[r, "\ev_{\neg A,\bot}"] & \bot.
				\end{tikzcd}
			\]
			By the assumption $\eta_A f = \eta_A g$, these two composites are equal.
			Hence indeed $\varphi f = \varphi g$, as was to be shown.

		\item We assume now that $\bot$ is a regular bigenerator.
			For every $\varphi \colon A \to \bot$, let $\overline{\varphi} \colon \neg\neg A \to \bot$ be the composite
			\[
				\begin{tikzcd}[column sep=4pc]
					\neg\neg A \ar[r, "\cong"] & \neg\neg A \otimes I \ar[r, "\id_{\neg\neg A} \otimes \name{\varphi}"]
					& \neg\neg A \otimes \neg A \ar[r, "\ev_{\neg A,\bot}"] & \bot
				\end{tikzcd}
			\]
			By the computation in the previous part, we have $\varphi = \overline{\varphi}\eta_A$.
			To identify this morphism more explicitly, specialize~\eqref{negf} to
			$\name{\varphi} \colon I \to \neg A$:
			\[
				\begin{tikzcd}[column sep=large,row sep=large]
					\neg\neg A \otimes I
					\ar[r, "\neg\name{\varphi} \otimes \id_I"]
					\ar[d, "\id_{\neg\neg A} \otimes \name{\varphi}"']
					& \neg I \otimes I \ar[d, "\ev_{I,\bot}"] \\
					\neg\neg A \otimes \neg A
					\ar[r, "\ev_{\neg A,\bot}"']
					& \bot.
				\end{tikzcd}
			\]
			Upon precomposing both paths with the unitor
			$\neg\neg A \cong \neg\neg A \otimes I$, the lower path is
			$\overline{\varphi}$ by definition, while the upper path is
			$\neg\name{\varphi}$ because $\neg I = \bot$ and
			$\ev_{I,\bot}$ is the right unitor by~\eqref{evI}, using
			naturality of the right unitor.
			Hence
			\[
				\overline{\varphi} = \neg\name{\varphi}.
			\]
			Note also that the family $(\overline{\varphi})_{\varphi \in \cV_0(A,\bot)}$ induces a canonical morphism
			$\overline{\sharp}_A \colon \neg\neg A \to \prod_{\varphi \in \cV_0(A,\bot)} \bot$
			satisfying
			$\sharp_A = \overline{\sharp}_A \eta_A$.

			Let now
			$\flat_A \colon \coprod_{\varphi \in \cV_0(A,\bot)} I \to \neg A$
			be the canonical morphism from the definition of regular bigenerator.
			Since $\flat_A$ is a regular epimorphism and $\neg$ sends colimits to limits by \Cref{lem:neg_colimits_to_limits}, the morphism
			\[
				\neg \flat_A \: \colon \neg\neg A \longrightarrow \neg\!\left(\coprod_{\varphi \in \cV_0(A,\bot)} I\right)
			\]
			is a regular monomorphism.
			Under the canonical isomorphism
			\[
				\neg\!\left(\coprod_{\varphi \in \cV_0(A,\bot)} I\right) \cong \prod_{\varphi \in \cV_0(A,\bot)} \neg I \,\cong \prod_{\varphi \in \cV_0(A,\bot)} \bot,
			\]
			this morphism is exactly $\overline{\sharp}_A$, since its $\varphi$-component is $\overline{\varphi}$.
			In particular, $\overline{\sharp}_A$ is monic.

			By definition of regular bigenerator, $\sharp_A$ is a regular monomorphism.
			Consider the commutative square
			\[
				\begin{tikzcd}
					A \ar[r, "\eta_A"] \ar[d, equal] &
					\neg\neg A \ar[d, "\overline{\sharp}_A"] \\
					A \ar[r, "\sharp_A"] &
				\prod_{\varphi \in \cV_0(A,\bot)} \bot.
				\end{tikzcd}
			\]
			Since $\overline{\sharp}_A$ is monic and $\sharp_A = \overline{\sharp}_A \eta_A$, this square is a pullback.
			Regular monomorphisms are stable under pullback, so $\eta_A$ is a regular monomorphism.
			\qedhere
	\end{enumerate}
\end{proof}

The following is another manifestation of the observation that $\neg$ sends colimits to limits (\cref{lem:neg_colimits_to_limits}).

\begin{lem}
	\label{lem:neg_coends_to_ends}
	Let $\cJ$ be a small $\cV$-category and $H \colon \cJ^\op \otimes \cJ \to \cV$ a $\cV$-functor.
	Then there is a canonical isomorphism involving end and coend of $H$, namely
	\[
		\neg \int^{X \in \cJ} H(X,X)
		\cong
		\int_{X \in \cJ} \neg H(X,X).
	\]
\end{lem}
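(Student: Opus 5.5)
We will obtain the isomorphism from the enriched adjunction $\neg^\op \dashv \neg$ of \eqref{neg_enriched_adjunction}. The point is that $\neg$ turns weighted colimits in $\cV$ into the corresponding weighted limits; this is the enriched counterpart of \cref{lem:neg_colimits_to_limits}. We argue directly via representability.

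The first step is to recall from \cref{app:limits_colimits} the defining universal properties. The coend $C \coloneqq \int^{X} H(X,X)$ is characterized by $\cV$-natural isomorphisms
\[
	(C \Rightarrow B) \cong \int_{X \in \cJ} \bigl(H(X,X) \Rightarrow B\bigr)
\]
for all $B \in \cV_0$, implemented by precomposition with the coprojections $H(X,X) \to C$. Dually, an end $\int_X G(X,X)$ of a $\cV$-functor $G \colon \cJ^\op \otimes \cJ \to \cV$ is characterized by
\[
	\Bigl(B \Rightarrow \int_X G(X,X)\Bigr) \cong \int_X \bigl(B \Rightarrow G(X,X)\bigr).
\]
Specializing the first isomorphism to $B = \bot$ immediately gives $\neg C \cong \int_X \bigl(H(X,X) \Rightarrow \bot\bigr) = \int_X \neg H(X,X)$. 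For this to be meaningful, we must check that the $\cV$-functor whose end is taken on the right is exactly $\neg \circ H^\op \colon \cJ \otimes \cJ^\op \to \cV$, viewed as a functor of mixed variance after swapping the factors. Its action on morphisms is the one from \cref{rem:negation}\ref{rem:negation_functor}, namely precomposition, which matches how $(-\Rightarrow \bot)$ acts in the coend formula. We also need that the end exists, which holds by completeness of $\cV_0$.

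If $\cV$-naturality of the coend's defining property is only available on underlying sets, we will argue via Yoneda instead. For every $A \in \cV_0$ we compute
\begin{align*}
	\cV_0(A, \neg C) &\cong \cV_0(C, \neg A) \cong \{\text{extraordinary } \cV\text{-natural families } H(X,X) \to \neg A\} \\
	&\cong \{\text{families } A \to \neg H(X,X) \text{ wedge-compatible}\} \cong \cV_0\Bigl(A, \int_X \neg H(X,X)\Bigr).
\end{align*}
The first isomorphism is the ordinary restriction of $\neg^\op \dashv \neg$, and the third applies the same adjunction componentwise. Ordinary Yoneda then concludes the argument.

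We expect the main obstacle to be the third isomorphism in this chain. There we must verify that the enriched wedge/cowedge conditions correspond under the tensor-hom bijection \eqref{pairing_explicit}. Concretely, a family $\alpha_X \colon H(X,X) \to \neg A$ satisfies the cowedge condition, which expresses the equality of two maps $\cJ(X,Y) \otimes H(Y,X) \to \neg A$, if and only if its transposes $A \to \neg H(X,X)$ satisfy the wedge condition for $\neg H$. We plan to check this by transposing both sides of the condition to maps $A \otimes \cJ(X,Y) \otimes H(Y,X) \to \bot$, using the symmetry and the description of $\neg$ on morphisms via \eqref{negf}. Both conditions then reduce to the same equation, and this is where the $\cV$-functoriality of $\neg$ gets used. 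The resulting isomorphism is canonical: its components are $\neg$ applied to the coprojections.
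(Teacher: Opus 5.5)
Your proposal is correct, and your Yoneda route is essentially the paper's proof unwound to hom-sets: the paper applies \cref{lem:neg_colimits_to_limits} (itself just the adjunction $\neg^\op \dashv \neg$) to the conical colimit presentation of the coend, and then identifies the dualized diagram with the end diagram of $K(Y,Z)\coloneqq\neg H(Z,Y)$ via $\neg\bigl(\cJ(Y,Z)\otimes H(Z,Y)\bigr)\cong \cJ(Y,Z)\Rightarrow\neg H(Z,Y)$, which is exactly the cowedge/wedge matching you flag as the main obstacle. Note that your first route presupposes the enriched universal property $(C\Rightarrow B)\cong\int_X\bigl(H(X,X)\Rightarrow B\bigr)$, which is not how the appendix defines coends (it uses a conical colimit in $\cV_0$) and which amounts to this very lemma with $\bot$ replaced by $B$; so it is the Yoneda argument that actually carries the proof.
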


\begin{proof}
	Recall from \cref{def:enriched_coend} that the coend $\int^{X \in \cJ} H(X,X)$ is defined as the conical colimit in $\cV_0$ of
	the diagram containing each $H(X,X)$ and each $\cJ(Y,Z)\otimes H(Z,Y)$, with the latter mapping to the former by the two morphisms
	\[
		\cJ(Y,Z)\otimes H(Z,Y) \longrightarrow H(Z,Z),
		\qquad
		\cJ(Y,Z)\otimes H(Z,Y) \longrightarrow H(Y,Y)
	\]
	induced by the two actions of $\cJ(Y,Z)$ on the bifunctor $H$.
	Applying $\neg$ to this diagram yields a diagram on objects $\neg H(X,X)$ and
	\[
		\neg\bigl(\cJ(Y,Z)\otimes H(Z,Y)\bigr)
		\,\cong\,
		\cJ(Y,Z)\Rightarrow \neg H(Z,Y).
	\]
	Now let $K \colon \cJ^\op \otimes \cJ \to \cV$ be the bifunctor given by $K(Y,Z) \coloneqq \neg H(Z,Y)$.
	Then the right-hand side is exactly $\cJ(Y,Z)\Rightarrow K(Y,Z)$, so the objects of the pointwise dualized diagram agree with those in
	the corresponding diagram for enriched ends~\eqref{wedge_condition}.
	Moreover, the two morphisms
	\[
		\neg H(Z,Z) \longrightarrow \neg\bigl(\cJ(Y,Z)\otimes H(Z,Y)\bigr),
		\qquad
		\neg H(Y,Y) \longrightarrow \neg\bigl(\cJ(Y,Z)\otimes H(Z,Y)\bigr)
	\]
	obtained by dualizing the two displayed structure morphisms correspond under the same tensor-hom adjunction to
	\[
		K(Z,Z) \longrightarrow \cJ(Y,Z)\Rightarrow K(Y,Z),
		\qquad
		K(Y,Y) \longrightarrow \cJ(Y,Z)\Rightarrow K(Y,Z).
	\]
	The first is the arrow in~\eqref{wedge_condition} induced by the action of $K$ in its first variable, and the second is the one induced by the action of $K$ in its second variable.
	Hence the pointwise dualized diagram is canonically isomorphic to the conical limit diagram~\eqref{wedge_condition} computing the end of $K$.
	Applying \Cref{lem:neg_colimits_to_limits} to the colimit presentation of the coend, we thus obtain
	\[
		\neg \int^{X \in \cJ} H(X,X)
		\cong
		\int_{X \in \cJ} K(X,X)
		=
		\int_{X \in \cJ} \neg H(X,X),
	\]
	which is the desired isomorphism.
\end{proof}

\begin{lem}
	\label{lem:module_neg_monos}
	A morphism $\alpha \colon \mathcal{F} \to \mathcal{G}$ in $\Mod_\cC$ is a regular monomorphism if and only if every component $\alpha_X \colon \mathcal{F}(X) \to \mathcal{G}(X)$ is a regular monomorphism in $\cV_0$.
\end{lem}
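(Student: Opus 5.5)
The plan is to use the fact that limits and colimits in $\Mod_\cC \cong [\cC^\op,\cV]_0$ are computed pointwise, since $\cV_0$ is complete and cocomplete. Once that is available, a regular monomorphism is exactly an equalizer of some parallel pair, and both directions reduce to componentwise statements.

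\emph{Pointwise (co)limits.} Given a small diagram $D$ in $\Mod_\cC$, I will form $L(X) \coloneqq \lim_j D_j(X)$ in $\cV_0$. The action $L(Y)\otimes\cC(X,Y)\to L(X)$ is the transpose of the morphism
\[
	L(Y) \longrightarrow \lim_j D_j(Y) \longrightarrow \lim_j \bigl(\cC(X,Y)\Rightarrow D_j(X)\bigr) \cong \cC(X,Y)\Rightarrow \lim_j D_j(X),
\]
which uses that $\cC(X,Y)\Rightarrow(-)$ is a right adjoint and so preserves limits. Associativity, unitality and the universal property all follow componentwise from the uniqueness of maps into limits. Dually, for colimits I use that $-\otimes\cC(X,Y)$ is a left adjoint (by closedness and symmetry) and so preserves colimits. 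Hence the evaluation functors $\mathrm{ev}_X\colon \Mod_\cC\to\cV_0$ preserve all small limits and colimits.

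\emph{($\Rightarrow$)} If $\alpha$ is the equalizer of $\beta,\gamma\colon\mathcal{G}\rightrightarrows\mathcal{H}$ in $\Mod_\cC$, then because equalizers are pointwise, each $\alpha_X$ is the equalizer of $\beta_X,\gamma_X$. So each $\alpha_X$ is a regular monomorphism.

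\emph{($\Leftarrow$)} Suppose every $\alpha_X$ is regular monic. I form the cokernel pair of $\alpha$ in $\Mod_\cC$, namely the pushout $\mathcal{G}\sqcup_{\mathcal{F}}\mathcal{G}$ with its two coprojections $i_1,i_2$, which is computed pointwise. Let $\epsilon\colon\mathcal{E}\to\mathcal{G}$ be the equalizer of $i_1,i_2$, also pointwise, and let $\alpha = \epsilon\,\kappa$ be the induced factorization. In any category with pushouts, a regular monomorphism is the equalizer of its own cokernel pair. Applying this in $\cV_0$ to each $\alpha_X$, and using that the cokernel pair and equalizer are computed componentwise, each $\kappa_X$ is an isomorphism. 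A morphism of modules whose components are all isomorphisms is an isomorphism of modules, because the componentwise inverses automatically satisfy the naturality square. Thus $\kappa$ is an isomorphism, and $\alpha$ is an equalizer, hence a regular monomorphism.

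\emph{Main obstacle.} The only nontrivial point is the pointwise construction of (co)limits, in particular checking that the induced action on a colimit is well defined and associative. This relies on $-\otimes\cC(X,Y)$ preserving colimits, and in the coend-free setting here that is just the closedness of $\cV$. Alternatively, I can cite the standard fact that for a Bénabou cosmos the enriched functor category $[\cC^\op,\cV]$ has pointwise (co)limits~\cite[\S 3.3]{Kelly}.
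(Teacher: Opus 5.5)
Your proposal is correct and follows essentially the same route as the paper. Both rely on the pointwise computation of (co)limits in $\Mod_\cC$ (the paper's appendix lemma): equalizers give the forward direction, and the pointwise cokernel pair $\mathcal{G} \amalg_{\mathcal{F}} \mathcal{G}$ with its pointwise equalizer gives the converse. Your comparison map $\kappa$ with invertible components simply spells out the step the paper summarizes as ``$\alpha$ is the equalizer of the two induced morphisms.'' Deriving pointwise limits from $\cC(X,Y)\Rightarrow(-)$ preserving limits is a harmless variant of the paper's direct construction of products and equalizers.
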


\begin{proof}
	If $\alpha$ is a regular monomorphism in $\Mod_\cC$, then it is the equalizer of some parallel pair.
	Since equalizers in $\Mod_\cC$ are computed pointwise by \Cref{lem:module_pointwise_conical}, every component $\alpha_X$ is a regular monomorphism in $\cV_0$.
	Conversely, suppose that every $\alpha_X$ is a regular monomorphism.
	Then $\alpha_X$ is the equalizer of its cokernel pair, i.e.~of the two morphisms
	\[
		\mathcal{G}(X)
		\rightrightarrows
		\mathcal{G}(X) \amalg_{\mathcal{F}(X)} \mathcal{G}(X).
	\]
	By the pointwise computation of colimits, the self-pushout $\mathcal{G} \amalg_{\mathcal{F}} \mathcal{G}$ in $\Mod_\cC$ has precisely these components.
	Thus, again by the pointwise computation of equalizers, $\alpha$ is the equalizer of the two induced morphisms
	\[
		\mathcal{G}
		\rightrightarrows
		\mathcal{G} \amalg_{\mathcal{F}} \mathcal{G}.
	\]
	Hence $\alpha$ is a regular monomorphism in $\Mod_\cC$.
\end{proof}

\subsection{Main results}
\label{sec:main_results}

We continue assuming that $\cV$ is a B\'enabou cosmos together with designated object $\bot \in \cV_0$.
The following result is the core technical observation that will allow us to deduce general separation, representation and extension theorems for modules over $\cV$-categories.

\begin{prop}[Enriched dual Yoneda lemma]
	\label{prop:dual_yoneda_enriched}
	For every right $\cC$-module $\mathcal{F}$ and every $W \in \cC$, there is an isomorphism
	\begin{equation}
		\label{dual_yoneda}
		[\cC^\op,\cV](\mathcal{F}, \neg \cC(W,-))
		\,\cong\,
		\neg \mathcal{F}(W)
	\end{equation}
	natural in $\mathcal{F}$ and $W$.
\end{prop}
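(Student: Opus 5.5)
The plan is to compute the left-hand side of \eqref{dual_yoneda} as an end and reduce it to the dual of a coend, to which the co-Yoneda \cref{lem:enriched_co_yoneda} applies. By definition of the enriched hom in $[\cC^\op,\cV]$,
\[
	[\cC^\op,\cV](\mathcal{F}, \neg \cC(W,-))
	= \int_{X \in \cC} \bigl(\mathcal{F}(X) \Rightarrow (\cC(W,X) \Rightarrow \bot)\bigr).
\]
Using the tensor-hom adjunction in its internal form, $A \Rightarrow (B \Rightarrow \bot) \cong (A \otimes B) \Rightarrow \bot$, natural in $A$ and $B$. Each integrand therefore becomes $\neg\bigl(\mathcal{F}(X) \otimes \cC(W,X)\bigr)$. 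This isomorphism is $\cV$-natural in both variables, so it identifies the wedge diagrams and hence the ends. We obtain
\[
	[\cC^\op,\cV](\mathcal{F}, \neg \cC(W,-)) \cong \int_{X\in\cC} \neg H(X,X),
	\qquad H(Y,X) \coloneqq \mathcal{F}(Y)\otimes\cC(W,X),
\]
where $H\colon \cC^\op\otimes\cC\to\cV$ is a $\cV$-functor, with the right action of $\mathcal{F}$ in the first variable and composition in the second.

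Next, \cref{lem:neg_coends_to_ends} gives $\int_{X} \neg H(X,X) \cong \neg \int^{X} H(X,X)$. By \cref{lem:enriched_co_yoneda}, $\int^{X}\mathcal{F}(X)\otimes\cC(W,X)\cong \mathcal{F}(W)$, implemented by the action maps $\mathcal{F}_{W,X}$. Applying $\neg$ to this isomorphism and composing everything yields \eqref{dual_yoneda}. Concretely, an element of $\neg\mathcal{F}(W)$, i.e.\ a map $\psi\colon \mathcal{F}(W)\to\bot$ at the level of underlying morphisms, corresponds to the module morphism with components the names of $\psi\circ\mathcal{F}_{W,X}\colon \mathcal{F}(X)\otimes\cC(W,X)\to\bot$. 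I would record this explicit description for later use, in the spirit of the proof-relevance remark.

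The main obstacle is the bookkeeping in the first step. One must check that the variance of $H$ matches the convention of \cref{lem:neg_coends_to_ends}: there $K(Y,Z)=\neg H(Z,Y)$, so the roles of the variables are swapped. One must also check that, after the currying isomorphism, the two wedge maps of the end defining $[\cC^\op,\cV](\mathcal{F},\neg\cC(W,-))$ correspond exactly to the duals of the two coend maps (the action of $\mathcal{F}$, and postcomposition in $\cC(W,-)$). The second of these uses the explicit form of the action of the corepresentable module from \cref{cor:corepresentable_module}, via dualizing composition. I would verify this directly on the transposes to $\bot$, where both sides reduce to the composite $\mathcal{F}(Z)\otimes\cC(Y,Z)\otimes\cC(W,Y)\to\bot$ obtained by acting or composing first. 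These agree by associativity of the module action.

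Finally, naturality. Naturality in $\mathcal{F}$ follows because every step (currying, \cref{lem:neg_coends_to_ends}, co-Yoneda) is natural in $\mathcal{F}$. Naturality in $W$ follows because $H$ depends functorially on $W$ through $\cC(W,-)$, and the co-Yoneda isomorphism is natural in $W$, being implemented by the action maps. Both can be checked on the explicit formula above.
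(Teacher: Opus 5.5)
Your proposal is correct and follows the paper's proof essentially verbatim. It uses the same chain of isomorphisms: currying the integrand, then \cref{lem:neg_coends_to_ends}, then the co-Yoneda \cref{lem:enriched_co_yoneda}. Your explicit formula $\psi \mapsto \bigl(\name{\psi\circ\mathcal{F}_{W,X}}\bigr)_X$ describes the inverse direction of the paper's formula, which is the name of $\neg j_W \circ \ev \circ (\pi_W\otimes\id)$, i.e.\ evaluation of $\eta_W$ at the identity; the two are consistent by the module unit axiom. One small correction to your wedge-matching check: the two transposes agree by the interchange law and the defining property of the dual action in \cref{lem:neg_op_module}, not by associativity of the module action.
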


In particular, taking underlying hom-sets of~\eqref{dual_yoneda} gives a natural bijection
\begin{equation}
	\label{dual_yoneda_set}
	\Mod_\cC(\mathcal{F}, \neg \cC(W, -)) \cong \cV_0(\mathcal{F}(W), \bot).
\end{equation}

\begin{proof}
	We have
	\begin{align}
		\label{eq:dual_yoneda_proof}
		\begin{split}
			[\cC^\op,\cV](\mathcal{F}, \neg \cC(W,-))
			& =
			\int_{X \in \cC} \bigl(\mathcal{F}(X) \Rightarrow \neg \cC(W,X)\bigr) \\
			& \cong
			\int_{X \in \cC} \neg\bigl(\mathcal{F}(X)\otimes \cC(W,X)\bigr) \\
			& \cong
			\neg \int^{X \in \cC} \mathcal{F}(X)\otimes \cC(W,X) \\
			& \cong
			\neg \mathcal{F}(W),
		\end{split}
	\end{align}
	where the first equality is the definition of the hom-objects in $[\cC^\op,\cV]$,
	the second follows from the tensor-hom adjunction together with $\neg \cC(W,X) = \cC(W,X) \Rightarrow \bot$,
	the third is an instance of \Cref{lem:neg_coends_to_ends},
	and the fourth is by the enriched co-Yoneda \cref{lem:enriched_co_yoneda}.

	Finally, we identify the isomorphism more concretely by showing that it is the name of the composite
	\begin{equation}
		\label{dual_yoneda_explicit}
		\begin{tikzcd}[column sep=large]
			\left( \int_{X \in \cC} \mathcal{F}(X) \Rightarrow \neg \cC(W,X) \right) \otimes \mathcal{F}(W)
				\ar[d, "\pi_W\otimes\id"]
			\\
				\bigl(\mathcal{F}(W)\Rightarrow\neg\cC(W,W)\bigr)\otimes\mathcal{F}(W)
					\ar[r, "\ev_{\mathcal{F}(W),\neg\cC(W,W)}"]
			&
			\neg\cC(W,W)
				\ar[r, "\neg j_W"]
			&
			\bot
		\end{tikzcd}
	\end{equation}
	where $\pi$ is the universal wedge of the end, so that $\pi_W$ is one of its components.
	Indeed the co-Yoneda isomorphism
	\[
		\int^{X \in \cC}\mathcal{F}(X)\otimes \cC(W,X)
		\cong
		\mathcal{F}(W)
	\]
	is induced by the action maps $\mathcal{F}_{W,X}$, and its inverse is the composite
	\[
		\begin{tikzcd}[column sep=large]
			\mathcal{F}(W)
				\ar[r, "\cong"]
			&
			\mathcal{F}(W)\otimes I
				\ar[r, "\id\otimes j_W"]
			&
			\mathcal{F}(W)\otimes\cC(W,W)
				\ar[r]
			&
			\int^{X \in \cC} \mathcal{F}(X)\otimes \cC(W,X),
		\end{tikzcd}
	\]
	where the last arrow is the $W$-component of the universal cowedge.
	This is the inverse because composing with the action morphisms $\mathcal{F}_{W,X}$ gives the identity on $\mathcal{F}(W)$.
\end{proof}

\begin{lem}
	\label{lem:factor_corepresentable_embedding}
	Suppose that $\bot$ is a regular bigenerator.
	Then for every $A \in \cV_0$ and $X \in \cC$, there is a regular monomorphism
	\begin{equation}
		\label{corep_embedding}
		\left( \neg A \Rightarrow \neg \cC(X,-) \right)
		\:\longrightarrow
		\prod_{\varphi \in \cV_0(A,\bot)} \neg \cC(X,-)
	\end{equation}
	in $\Mod_\cC$.
\end{lem}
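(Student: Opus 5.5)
The plan is to obtain \eqref{corep_embedding} by precomposing with the regular epimorphism $\flat_A$ from \cref{def:regular_bigenerator}, and then to verify everything pointwise, using \cref{lem:module_neg_monos} to pass from components to $\Mod_\cC$.

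First I will set up the module structure. For a fixed object $B \in \cV_0$, the internal hom $B \Rightarrow -$ is a $\cV$-functor $\cV \to \cV$. Composing the right module $\neg\cC(X,-)$ (a $\cV$-functor $\cC^\op \to \cV$) with it therefore gives a right $\cC$-module $Y \mapsto (B \Rightarrow \neg\cC(X,Y))$. I take this as the meaning of the left-hand side, with $B = \neg A$. The construction is natural in $B$: a morphism $g \colon B' \to B$ induces a morphism of right $\cC$-modules $(B \Rightarrow \neg\cC(X,-)) \to (B' \Rightarrow \neg\cC(X,-))$ by precomposition. The compatibility with the actions is the usual fact that the internal hom is a $\cV$-bifunctor, so that pre- and postcomposition commute.

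Next I apply this naturality to $g = \flat_A \colon \coprod_{\varphi} I \to \neg A$. This gives a module morphism
\[
	\left( \neg A \Rightarrow \neg\cC(X,-) \right) \longrightarrow \left( \coprod_{\varphi \in \cV_0(A,\bot)} I \Rightarrow \neg\cC(X,-) \right) \cong \prod_{\varphi \in \cV_0(A,\bot)} \left( I \Rightarrow \neg\cC(X,-) \right) \cong \prod_{\varphi \in \cV_0(A,\bot)} \neg\cC(X,-).
\]
Here the middle isomorphism holds because $- \Rightarrow M$ turns colimits into limits. This is the same argument as in \cref{lem:neg_colimits_to_limits}, with $\bot$ replaced by an arbitrary object $M$, since $(-\Rightarrow M)^\op \dashv (-\Rightarrow M)$ is again an adjunction. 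I must check that these isomorphisms are module isomorphisms. They are natural in $M$, and limits of modules are computed pointwise by \Cref{lem:module_pointwise_conical}, so they hold at the module level. Concretely, the $\varphi$-component of the resulting map is precomposition with $\name{\varphi} \colon I \to \neg A$.

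Finally, I will show that this morphism is a regular monomorphism. By \cref{lem:module_neg_monos}, it suffices to check this in each component $Y$. The component at $Y$ is $\flat_A \Rightarrow \neg\cC(X,Y)$. Since $\bot$ is a regular bigenerator, $\flat_A$ is a regular epimorphism, so it is the coequalizer of some pair. The functor $- \Rightarrow \neg\cC(X,Y)$ sends colimits to limits, so it sends this coequalizer to an equalizer. Hence each component is a regular monomorphism, exactly as in the proof of \cref{lem:neg_monos}(b).

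The main obstacle is the bookkeeping in the first step. One has to confirm that the pointwise internal hom really defines a right module, and that precomposition with $\flat_A$ and the product identification are compatible with the $\cC$-actions. The regular-mono argument itself is immediate once this is in place.
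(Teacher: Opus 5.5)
Your proposal is correct and follows essentially the same route as the paper: precompose with the regular epimorphism $\flat_A$, use that $(-)\Rightarrow\neg\cC(X,W)$ sends colimits to limits (so it turns $\flat_A$ into a regular monomorphism and $\coprod_\varphi I$ into $\prod_\varphi \neg\cC(X,W)$), and conclude in $\Mod_\cC$ via \cref{lem:module_neg_monos}. Your extra bookkeeping makes explicit what the paper compresses into the word ``naturality'', namely why $\neg A \Rightarrow \neg\cC(X,-)$ is a right module and why precomposition with $\flat_A$ and the product identification are module morphisms.
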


\begin{proof}
	We show that the morphism of type~\eqref{corep_embedding} whose $\varphi$-component at any $W \in \cC$ is evaluation at $\varphi$,
	i.e.~precomposition with the name $\name{\varphi} \colon I \to \neg A$, has regular monomorphism components.
	Its naturality together with~\cref{lem:module_neg_monos} then implies that we have a regular monomorphism in $\Mod_\cC$.

	Since $\bot$ is a regular bigenerator, the canonical morphism
	\[
		\flat_A \colon \coprod_{\varphi \in \cV_0(A,\bot)} I \longrightarrow \neg A
	\]
	is a regular epimorphism in $\cV_0$.
	For fixed $W$, applying the functor $(-)\Rightarrow\neg\cC(X, W)\colon\cV_0^\op\to\cV_0$ to $\flat_A$ yields a morphism
	\[
		\left( \neg A \Rightarrow \neg \cC(X,W) \right)
		\:\longrightarrow
		\left( \coprod_{\varphi \in \cV_0(A,\bot)} I \right) \Rightarrow \neg \cC(X,W),
	\]
	which is given by precomposition with $\flat_A$.
	Thanks to~\cref{lem:neg_colimits_to_limits} applied with $\neg \cC(X,W)$ as the distinguished object, 
	this morphism is a regular monomorphism in $\cV_0$.
	Another application of the same lemma lets us pull out the coproduct in its codomain, so that we obtain a regular monomorphism of type
	\begin{equation}
		\label{corep_embedding_pointwise}
		\left( \neg A \Rightarrow \neg \cC(X,W) \right)
		\:\longrightarrow
		\prod_{\varphi \in \cV_0(A,\bot)} \neg \cC(X,W).
	\end{equation}
	Its $\varphi$-component is precomposition with $\name{\varphi}$ by construction.
\end{proof}

We are now in a position to state and prove our main results.

\begin{theorem}[Separation theorem]
	\label{separation}
	Suppose that $\bot$ is a coseparator in $\cV_0$.
	Then the corepresentable modules are a coseparating family of objects in $\Mod_\cC$.
\end{theorem}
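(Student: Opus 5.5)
The plan is to reduce coseparation in $\Mod_\cC$ to coseparation in $\cV_0$ by means of the dual Yoneda bijection~\eqref{dual_yoneda_set}. Coseparation means the following: given two distinct morphisms $\alpha, \beta \colon \mathcal{G} \to \mathcal{F}$ of right $\cC$-modules, we must find $W \in \cC$ and a module morphism $\theta \colon \mathcal{F} \to \neg\cC(W,-)$ with $\theta\alpha \neq \theta\beta$.

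First, since $\alpha \neq \beta$ and morphisms of modules are determined by their components, there is an object $W \in \cC$ with $\alpha_W \neq \beta_W \colon \mathcal{G}(W) \to \mathcal{F}(W)$. Because $\bot$ is a coseparator in $\cV_0$, there is then some $\varphi \colon \mathcal{F}(W) \to \bot$ with $\varphi\alpha_W \neq \varphi\beta_W$.

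Second, let $\theta \colon \mathcal{F} \to \neg\cC(W,-)$ be the module morphism corresponding to $\varphi$ under~\eqref{dual_yoneda_set}. By naturality of this bijection in $\mathcal{F}$, the composite $\theta\alpha$ corresponds to $\varphi\alpha_W$, and likewise $\theta\beta$ corresponds to $\varphi\beta_W$. Naturality in $\mathcal{F}$ here means compatibility with precomposition by module morphisms $\mathcal{G} \to \mathcal{F}$ on the left and by the component at $W$ on the right. Since the bijection is injective, $\varphi\alpha_W \neq \varphi\beta_W$ forces $\theta\alpha \neq \theta\beta$, which completes the argument.

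The only step needing care is this naturality claim: that the bijection really sends $\theta \circ \alpha$ to $\Phi(\theta)\circ\alpha_W$, where $\Phi(\theta)$ is the morphism $\mathcal{F}(W) \to \bot$ assigned to $\theta$. I would verify it using the explicit description~\eqref{dual_yoneda_explicit}. On underlying morphisms, that description gives
\[
	\Phi(\theta) = \neg j_W \circ \theta_W \colon \mathcal{F}(W) \to \neg\cC(W,W) \to \bot,
\]
that is, the $W$-component of $\theta$ followed by evaluation at the identity $j_W$. With this formula the desired identity is immediate:
\[
	\Phi(\theta\alpha) = \neg j_W \circ \theta_W \circ \alpha_W = \Phi(\theta)\circ\alpha_W.
\]
Alternatively, the naturality in $\mathcal{F}$ asserted in \cref{prop:dual_yoneda_enriched} can simply be invoked directly.
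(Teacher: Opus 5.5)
Your proof is correct and follows the paper's argument step for step. You pick a component where $\alpha$ and $\beta$ differ, separate there using the coseparator $\bot$, transport the resulting morphism to a module morphism into $\neg\cC(W,-)$ via the dual Yoneda bijection~\eqref{dual_yoneda_set}, and conclude by naturality. Your formula $\Phi(\theta)=\neg j_W\circ\theta_W$ read off from~\eqref{dual_yoneda_explicit} correctly makes the naturality step concrete, and the final step only needs $\Phi$ to be a well-defined function, not injective.
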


More explicitly, this means the following:
for any $\mathcal{F}, \mathcal{G} \in \Mod_\cC$ and $\alpha, \beta \colon \mathcal{F} \to \mathcal{G}$ with $\alpha \neq \beta$, there exist $A \in \cC$ and $\eta \colon \mathcal{G} \to \neg \cC(A, -)$ such that $\eta \alpha \neq \eta \beta$.

\begin{proof}
	Since $\alpha \neq \beta$, there is some $X \in \cC$ such that $\alpha_X \neq \beta_X$ as morphisms $\mathcal{F}(X) \to \mathcal{G}(X)$ in $\cV_0$.
	Because $\bot$ is a coseparator, there exists a morphism $\varphi \colon \mathcal{G}(X) \to \bot$
	with $\varphi \alpha_X \neq \varphi \beta_X$.
	By \Cref{prop:dual_yoneda_enriched}, this in turn corresponds to a morphism $\eta \colon \mathcal{G} \longrightarrow \neg \cC(X, -)$.
	Naturality of the dual Yoneda bijection in the module variable implies that $\eta\alpha$ corresponds to $\varphi\alpha_X$ and $\eta\beta$ corresponds to $\varphi\beta_X$.
	Since $\varphi \alpha_X \neq \varphi \beta_X$, it follows that $\eta \alpha \neq \eta \beta$.
\end{proof}

Let us say that a morphism of $\cC$-modules $\alpha \colon \mathcal{F} \to \mathcal{G}$ is an \textbf{embedding} if it is a composite of regular monomorphisms in $\Mod_\cC$.
Unfortunately, an embedding is not necessarily itself a regular monomorphism.\footnote{Following~\cite[p.~127]{KellyMonomorphismsPullbacks}, let $\Ab_{\setminus 4}$ be the category of abelian groups with no elements of order $4$, which is complete and cocomplete (and inherits the closed symmetric monoidal structure from $\Ab$). Then a monomorphism $f \colon A \to B$ in $\Ab_{\setminus 4}$ is regular if and only if $B/f(A)$ has no elements of order $4$. Thus the monomorphism $2 \colon \Z \to \Z$ is regular, while its composite with itself, which is $4 \colon \Z \to \Z$, is not regular. Now take $\cV_0 = \Ab_{\setminus 4}$ and let $\cC$ be the one-object $\cV$-category whose unique hom-object is $I$. Then a right $\cC$-module is just an object of $\cV$, since its only action morphism is necessarily the right unitor, and module morphisms are precisely morphisms in $\cV_0$. Hence $\Mod_\cC \cong \cV_0$, which gives the desired example.}
But since strong monomorphisms are closed under composition and every regular monomorphism is strong, we can at least say that every embedding is a strong monomorphism.

\begin{theorem}[Representation theorem]
	\label{representation}
	Let $\mathcal{F}$ be a right $\cC$-module.
	\begin{enumerate}
		\item\label{item:representation}
			Without any assumptions on $\bot$, there is a natural isomorphism
			\begin{equation}
				\label{eq:representation}
				\neg\neg \mathcal{F}
				\cong
				\int_{X \in \cC} \bigl(\neg \mathcal{F}(X) \Rightarrow \neg \cC(X,-)\bigr).
			\end{equation}

		\item Suppose that $\bot$ is a regular bigenerator.
			Then there is an embedding of $\mathcal{F}$ into a product of corepresentable modules of the form
			\begin{equation}
				\mathcal{F} \longrightarrow \prod_{X \in \cC} \prod_{\varphi \in \cV_0(\mathcal{F}(X),\bot)} \neg \cC(X,-).
			\end{equation}
	\end{enumerate}
\end{theorem}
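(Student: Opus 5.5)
For part~\ref{item:representation}, the plan is to evaluate both sides at an arbitrary $W \in \cC$ and compare them through \cref{prop:dual_yoneda_enriched}. The module $\neg\neg\mathcal{F}$ is the pointwise double dual. It is a right $\cC$-module by applying \cref{lem:neg_op_module} twice, once in its right-module variant and once in its left-module variant. Since $\neg\mathcal{F}$ is a left $\cC$-module, the left-module version of the dual Yoneda lemma (obtained via ${}_\cC\Mod \cong \Mod_{\cC^\op}$) gives
\[
	[\cC,\cV]\bigl(\neg\mathcal{F}, \neg\cC(-,W)\bigr) \cong \neg\neg\mathcal{F}(W),
\]
natural in $W$. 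Writing the left-hand hom-object as the end $\int_{X} \bigl(\neg\mathcal{F}(X)\Rightarrow \neg\cC(X,W)\bigr)$ and letting $W$ vary, we recover the right-hand side of~\eqref{eq:representation}. The remaining work is to check that the right $\cC$-module structures agree. The structure on the end is induced pointwise by the action on the corepresentable $\neg\cC(X,-)$, using that ends commute with the $W$-variable. The structure on $\neg\neg\mathcal{F}$ is dualized twice. I expect this compatibility to follow from the naturality in $W$ of \cref{prop:dual_yoneda_enriched}, made enriched, together with the explicit description~\eqref{dual_yoneda_explicit}. This bookkeeping is the main technical obstacle, though it is routine.

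For part~(b), the embedding will be the composite of two morphisms. First comes the unit $\eta_{\mathcal{F}} \colon \mathcal{F} \to \neg\neg\mathcal{F}$, with components $\eta_{\mathcal{F}(X)}$. This is a morphism of right modules because $\eta$ is $\cV$-natural (\cref{rem:negation}). By \cref{lem:neg_monos}, each component is a regular monomorphism, so by \cref{lem:module_neg_monos} $\eta_{\mathcal{F}}$ is a regular monomorphism in $\Mod_\cC$.

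Next, we use part~\ref{item:representation} and the universal wedge of the end. The end is a limit computed pointwise (\cref{lem:module_pointwise_conical}), so its canonical morphism
\[
	\int_X \bigl(\neg\mathcal{F}(X)\Rightarrow\neg\cC(X,-)\bigr) \longrightarrow \prod_X \bigl(\neg\mathcal{F}(X)\Rightarrow\neg\cC(X,-)\bigr)
\]
is the equalizer part of the end's limit presentation, hence a regular monomorphism. Finally, we take the product over $X$ of the regular monomorphisms from \cref{lem:factor_corepresentable_embedding}, applied with $A = \mathcal{F}(X)$. A product of regular monomorphisms is a regular monomorphism: products of equalizer diagrams are equalizer diagrams, and this can also be checked pointwise via \cref{lem:module_neg_monos}.

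Composing these three regular monomorphisms gives an embedding
\[
	\mathcal{F} \longrightarrow \prod_{X\in\cC}\prod_{\varphi\in\cV_0(\mathcal{F}(X),\bot)}\neg\cC(X,-),
\]
as required. We do not need the composite itself to be regular, since an embedding is by definition a composite of regular monomorphisms. Tracing the construction, the $(X,\varphi)$-component should be the module morphism that corresponds to $\varphi$ under~\eqref{dual_yoneda_set}. I would verify this as a sanity check, but it is not needed for the statement.
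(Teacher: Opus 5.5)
Your proposal follows the paper's proof. Part (a) is the dual Yoneda lemma applied over $\cC^\op$ to the left module $\neg\mathcal{F}$, and part (b) is the composite of $\eta_{\mathcal{F}}$ (a regular monomorphism by \cref{lem:neg_monos,lem:module_neg_monos}), the isomorphism from (a), the equalizer $e$ presenting the end, and $\prod_X m_X$ built from \cref{lem:factor_corepresentable_embedding}.
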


\begin{proof}
	\begin{enumerate}
		\item
			$\neg \mathcal{F}$ is a left $\cC$-module by \Cref{lem:neg_op_module}.
			Now apply \Cref{prop:dual_yoneda_enriched} with $\cC^\op$ in place of $\cC$.
			For every object $W \in \cC$, this yields the natural isomorphism
			\[
				\int_{X \in \cC} \bigl(\neg \mathcal{F}(X) \Rightarrow \neg \cC(X,W)\bigr)
				\,=\,
				[\cC,\cV](\neg \mathcal{F}, \neg \cC(-,W))
				\,\cong\,
				\neg\neg \mathcal{F}(W),
			\]
			where the first step is just the definition of the hom-objects in $[\cC,\cV]$.

			Concretely, and described from right to left, the component of the isomorphism at any $W \in \cC$ is the name of the composite
			\[
				\begin{tikzcd}[column sep=large]
					\left(\int_{X \in \cC} \bigl(\neg \mathcal{F}(X) \Rightarrow \neg \cC(X,W)\bigr)\right)\otimes\neg\mathcal{F}(W)
					\ar[d, "\pi_W\otimes\id"]
					\\
					\bigl(\neg\mathcal{F}(W)\Rightarrow\neg\cC(W,W)\bigr)\otimes\neg\mathcal{F}(W)
					\ar[r, "\ev_{\neg\mathcal{F}(W),\neg\cC(W,W)}"]
						&
						\neg\cC(W,W)
						\ar[r, "\neg j_W"]
						&
						\bot
				\end{tikzcd}
			\]
			where $\pi_W$ is the $W$-component of the end.
			This explicit description follows by the one given in \Cref{prop:dual_yoneda_enriched}.
		\item
			Since $\eta$ is the enriched unit of~\eqref{neg_enriched_adjunction}, whiskering it with the $\cV$-functor $\mathcal{F}\colon\cC^\op\to\cV$ shows that the morphisms $\eta_{\mathcal{F}(X)}$ are the components of a morphism of modules
			\[
				\eta_{\mathcal{F}}\colon\mathcal{F}\longrightarrow\neg\neg\mathcal{F}.
			\]
			These components are regular monomorphisms by \Cref{lem:neg_monos}, and therefore $\eta_{\mathcal{F}}$ is a regular monomorphism in $\Mod_\cC$ by \Cref{lem:module_neg_monos}.
			It is therefore enough to prove the statement with $\neg\neg \mathcal{F}$ in place of $\mathcal{F}$.
			By~\eqref{eq:end_equalizer}, the end has a standard presentation as an equalizer of a parallel pair of morphisms out of the product
			\[
				\prod_{X \in \cC} \bigl(\neg \mathcal{F}(X) \Rightarrow \neg \cC(X,-)\bigr).
			\]
			Let
			\[
				e \: \colon \int_{X \in \cC} \bigl(\neg \mathcal{F}(X) \Rightarrow \neg \cC(X,-)\bigr)
				\longrightarrow
				\prod_{X \in \cC} \bigl(\neg \mathcal{F}(X) \Rightarrow \neg \cC(X,-)\bigr)
			\]
			denote this equalizer morphism, which is a regular monomorphism by definition of the latter.
			For each $X \in \cC$, let
			\[
				m_X \colon \bigl(\neg \mathcal{F}(X) \Rightarrow \neg \cC(X,-)\bigr)
				\longrightarrow
				\prod_{\varphi \in \cV_0(\mathcal{F}(X),\bot)} \neg \cC(X,-)
			\]
			be the regular monomorphism from \Cref{lem:factor_corepresentable_embedding}.
			Then
			\[
				\prod_X m_X \: \colon \prod_{X \in \cC} \bigl(\neg \mathcal{F}(X) \Rightarrow \neg \cC(X,-)\bigr)
				\longrightarrow
				\prod_{X \in \cC} \prod_{\varphi \in \cV_0(\mathcal{F}(X),\bot)} \neg \cC(X,-)
			\]
			is again a regular monomorphism by the commutation of equalizers with products.
			Overall, we thus obtain the desired embedding as the composite
			\[
				\begin{tikzcd}
					\mathcal{F}
					\ar[r, "\eta_{\mathcal{F}}"]
					&
					\neg\neg\mathcal{F}
					\ar[r, "\cong"]
					&
					\displaystyle \int_{X \in \cC} \bigl(\neg \mathcal{F}(X) \Rightarrow \neg \cC(X,-)\bigr)
					\ar[d, "e"] \\
					& & 
					\displaystyle \prod_{X \in \cC} \bigl(\neg \mathcal{F}(X) \Rightarrow \neg \cC(X,-)\bigr)
					\ar[r, "\prod_X m_X"]
					&
					\displaystyle \prod_{X \in \cC} \prod_{\varphi \in \cV_0(\mathcal{F}(X),\bot)} \neg \cC(X,-).
				\end{tikzcd}
			\]

			Finally, we show that the $(X,\varphi)$-component of our embedding at any $W \in \cC$ is the name of the composite
			\begin{equation}
				\label{eq:representation_explicit}
				\begin{tikzcd}
					\mathcal{F}(W) \otimes \cC(X, W) \ar[r, "\mathcal{F}_{X,W}"] & \mathcal{F}(X) \ar[r, "\varphi"] & \bot.
				\end{tikzcd}
			\end{equation}
			By~\eqref{dual_yoneda_set}, it is enough to identify the morphism $\mathcal{F}(X)\to\bot$ obtained by taking $W = X$ and precomposing with $\id_{\mathcal{F}(X)} \otimes j_X$.
			By the construction above and the description of $m_X$ as precomposition with $\name{\varphi}$ and naturality of dual Yoneda, this morphism is
			\[
				\neg\name{\varphi}\,\eta_{\mathcal{F}(X)}
				=
				\varphi,
			\]
			where the equality is the computation in the proof of \Cref{lem:neg_monos}.
			The concrete description~\eqref{dual_yoneda_explicit} of dual Yoneda together with the module unit axiom now shows that the $(X,\varphi)$-component at general $W$ is indeed precisely the name of~\eqref{eq:representation_explicit}.
			\qedhere
	\end{enumerate}
\end{proof}

Let us call an object $A$ in $\cV_0$ \textbf{regularly injective} if for every regular monomorphism $m : B \to C$ in $\cV_0$ and every morphism $f : B \to A$,
there is a morphism $g : C \to A$ which extends $f$ in the sense that the diagram
\[
	\begin{tikzcd}
		B \ar[rr, hook, "m"] \ar[dr, "f"'] & & C \ar[dl, dashed, "g"] \\
						   & A 
	\end{tikzcd}
\]
commutes.

\begin{theorem}[Extension theorem]
	\label{extension}
	Suppose that $\bot$ is regularly injective in $\cV_0$.
	Then products of corepresentable modules are regularly injective in $\Mod_{\cC}$.
\end{theorem}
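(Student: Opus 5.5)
We first reduce to a single corepresentable module. A product of regularly injective objects is again regularly injective: given a regular monomorphism $\mu \colon \mathcal{F} \to \mathcal{G}$ in $\Mod_\cC$ and a morphism $\alpha \colon \mathcal{F} \to \prod_i \neg\cC(X_i,-)$, we extend each component $\pi_i \alpha$ separately along $\mu$. The universal property of the product then assembles these extensions into one morphism $\mathcal{G} \to \prod_i \neg\cC(X_i,-)$ extending $\alpha$. So it suffices to show that a single corepresentable module $\neg\cC(W,-)$ is regularly injective in $\Mod_\cC$.

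For this we use the dual Yoneda bijection~\eqref{dual_yoneda_set}, which translates the extension problem in $\Mod_\cC$ into one in $\cV_0$. Let $\mu \colon \mathcal{F} \to \mathcal{G}$ be a regular monomorphism of right $\cC$-modules and $\alpha \colon \mathcal{F} \to \neg\cC(W,-)$ a module morphism.
\begin{enumerate}
	\item Under~\eqref{dual_yoneda_set}, $\alpha$ corresponds to some $\varphi \colon \mathcal{F}(W) \to \bot$ in $\cV_0$.
	\item By \cref{lem:module_neg_monos}, the component $\mu_W \colon \mathcal{F}(W) \to \mathcal{G}(W)$ is a regular monomorphism in $\cV_0$.
	\item Since $\bot$ is regularly injective, there is $\psi \colon \mathcal{G}(W) \to \bot$ with $\psi \mu_W = \varphi$.
	\item Let $\beta \colon \mathcal{G} \to \neg\cC(W,-)$ be the module morphism corresponding to $\psi$ under~\eqref{dual_yoneda_set}.
\end{enumerate}

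It remains to check that $\beta\mu = \alpha$, and this is the one point that needs care. The bijection~\eqref{dual_yoneda_set} is natural in the module variable, so precomposition with $\mu$ on the module side corresponds to precomposition with $\mu_W$ on the $\cV_0$ side. Hence $\beta\mu$ corresponds to $\psi\mu_W = \varphi$, which is also the image of $\alpha$, and injectivity of the bijection gives $\beta\mu = \alpha$. This naturality can be confirmed from the explicit form~\eqref{dual_yoneda_explicit}: the morphism $\mathcal{F}(W)\to\bot$ associated to a module morphism $\gamma$ is $\neg j_W \circ \gamma_W$, suitably transposed. This expression is visibly compatible with precomposition by $\mu_W$, because $(\beta\mu)_W = \beta_W \mu_W$. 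So the main obstacle is only this naturality bookkeeping, and the explicit description from \cref{prop:dual_yoneda_enriched} settles it directly.
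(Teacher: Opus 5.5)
Your proposal is correct and follows the paper's proof essentially step by step: reduce to a single corepresentable module via the product argument, then transport the extension problem along the dual Yoneda bijection~\eqref{dual_yoneda_set}. From there, use \cref{lem:module_neg_monos} to see that $\mu_W$ is a regular monomorphism, extend using regular injectivity of $\bot$, and conclude $\beta\mu = \alpha$ by naturality, where your extra check via the explicit form $\neg j_W \circ \gamma_W$ is a correct elaboration of a point the paper leaves implicit.
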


\begin{proof}
	Since products of regularly injective objects are clearly regularly injective, it is enough to show that the corepresentable right module $\neg \cC(A,-)$ is regularly injective in $\Mod_{\cC}$, for $A \in \cC$.

	Let $m : \mathcal{F} \to \mathcal{G}$ be a regular monomorphism in $\Mod_{\cC}$ and $\alpha : \mathcal{F} \longrightarrow \neg \cC(A,-)$ a morphism of right $\cC$-modules.
	Under \eqref{dual_yoneda_set}, the morphism $\alpha$ corresponds to a morphism
	\[
		\widehat\alpha \: : \: \mathcal{F}(A) \longrightarrow \bot
	\]
	in $\cV_0$.
	Since the component $m_A : \mathcal{F}(A) \to \mathcal{G}(A)$ is a regular monomorphism in $\cV_0$ by \Cref{lem:module_neg_monos},
	we can use the regular injectivity of $\bot$ to get $\widehat\beta : \mathcal{G}(A) \to \bot$ which extends $\widehat\alpha$ along $m_A$.
	Applying the dual Yoneda isomorphism of \eqref{dual_yoneda_set} again, $\widehat\beta$ corresponds to a morphism of right modules
	\[
		\beta \: : \: \mathcal{G} \longrightarrow \neg \cC(A,-).
	\]
	Naturality of the dual Yoneda bijection with respect to $m$ implies that $\beta m$ corresponds to $\widehat\beta m_A = \widehat\alpha$,
	and therefore $\beta m = \alpha$.
	This proves that $\neg \cC(A,-)$ is regularly injective.
\end{proof}

\begin{example}
	Continuing on from~\cref{ab_corep}, we have the following:
	\begin{enumerate}
		\item The separation theorem states that every two distinct morphisms of right $R$-modules $\alpha, \beta \colon M \to N$ can be separated by an $R$-module homomorphism $N \to R^\vee$.
		\item The representation theorem states that every right $R$-module $M$ admits an embedding into a power of $R^\vee$, namely
			\begin{align*}
				M & \longrightarrow \prod_{\varphi \in \Ab(M,\Q/\Z)} R^\vee \\
				m & \longmapsto \left( a \mapsto \varphi(ma)) \right)_{\varphi \in \Ab(M,\Q/\Z)}.
			\end{align*}
			This concrete form is obtained from the general description in the proof at~\eqref{eq:representation_explicit}.
		\item The extension theorem states that $R^\vee$ is an injective right $R$-module:
			for every submodule inclusion $M \subseteq N$, every $R$-module homomorphism $M \to R^\vee$ can be extended to an $R$-module homomorphism $N \to R^\vee$.
			Under the adjunction $\Mod_R(-,R^\vee) \cong \Ab(-,\Q/\Z)$, this is precisely the usual injectivity of $\Q/\Z$ as an abelian group.
	\end{enumerate}
\end{example}

\section{Example: Banach modules}
\label{sec:banach_modules}

Let $\K \in \{\R,\C\}$.
In this section, we instantiate the preceding theory in the context of Banach modules over unital Banach algebras.
This is intended primarily as a warm-up example for the more involved case of operator systems considered in the next section.
We make no claims to originality of these results.
For background on Banach modules from a categorical perspective, we refer to~\cite{CiglerLosertMichor}.

\subsection{The category of Banach spaces}

\begin{definition}
	$\Ban$ denotes the category of Banach spaces over $\K$ and linear contractions, equipped with the projective tensor product as monoidal structure.
\end{definition}

The following observation is then well-known; see for example~\cite{CiglerLosertMichor}.

\begin{lem}
	$\Ban$ is a B\'enabou cosmos.
\end{lem}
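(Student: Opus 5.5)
We have to verify three things: that $\Ban$ with the projective tensor product is symmetric monoidal, that it is closed, and that the underlying category is complete and cocomplete.

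\textbf{Monoidal structure.} For Banach spaces $A$ and $B$, take $A \widehat\otimes_\pi B$ to be the completion of the algebraic tensor product under the projective norm
\[
	\|u\|_\pi = \inf\Bigl\{\textstyle\sum_i \|a_i\|\,\|b_i\| : u = \sum_i a_i \otimes b_i\Bigr\}.
\]
Its defining universal property is that linear contractions $A \widehat\otimes_\pi B \to C$ correspond bijectively to bilinear maps $A \times B \to C$ of norm at most $1$. The monoidal unit is $\K$.

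The associator, unitors and symmetry are the evident maps on elementary tensors. To see that they are isometric isomorphisms, I would avoid computing norms directly. Instead I would check that both sides corepresent the same functor, namely contractive multilinear maps. Yoneda then gives isomorphisms in $\Ban$, and the coherence axioms hold because they already hold on the algebraic tensor products, which are dense.

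\textbf{Closedness.} Let $B \Rightarrow C$ be the Banach space $\mathcal{B}(B,C)$ of bounded operators with the operator norm. This is complete because $C$ is. A bilinear map $\beta\colon A \times B \to C$ with $\|\beta\|\le 1$ corresponds to $a \mapsto \beta(a,-)$, which is a linear contraction $A \to \mathcal{B}(B,C)$. The correspondence is bijective and natural in all three variables. Combined with the universal property above, this gives the tensor-hom adjunction~\eqref{internalhom}.

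\textbf{Completeness and cocompleteness.} This is the step needing the most care, since $\Ban$ lacks infinite products in the naive sense and one has to use the $\ell^\infty$/$\ell^1$ constructions.
\begin{itemize}
	\item Products of a family $(A_i)$ are the $\ell^\infty$-sum $\{(a_i) : \sup_i \|a_i\| < \infty\}$ with the sup norm. A family of contractions $f_i\colon X \to A_i$ has uniformly bounded image, so it factors uniquely.
	\item Equalizers of $f,g\colon A \to B$ are the closed subspace $\ker(f-g)$ with the restricted norm.
	\item Coproducts are the $\ell^1$-sum $\{(a_i) : \sum_i \|a_i\| < \infty\}$. A family of contractions $g_i\colon A_i \to Y$ induces $(a_i)\mapsto \sum_i g_i(a_i)$, which converges absolutely and is a contraction.
	\item Coequalizers of $f,g$ are the quotient $B/\overline{\mathrm{im}(f-g)}$ with the quotient norm. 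Taking the closure is what keeps the quotient Hausdorff and complete. A contraction killing $f-g$ kills this closure by continuity and then factors contractively through the quotient.
\end{itemize}
Since every small limit is built from products and equalizers, and every small colimit from coproducts and coequalizers, $\Ban$ is complete and cocomplete. Together with the symmetric monoidal closed structure, this makes $\Ban$ a Bénabou cosmos.
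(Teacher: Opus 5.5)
Your proposal is correct and follows the same route as the paper's proof sketch: the internal hom is $\mathcal{B}(B,C)$ with the operator norm, and the adjunction comes from the universal property of the projective tensor product via contractive bilinear maps. Limits and colimits are built from $\ell^\infty$-products, closed-subspace equalizers, $\ell^1$-coproducts and closed quotients; the only addition is your Yoneda sketch of the symmetric monoidal coherence, which the paper takes as standard.
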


\begin{proof}[Proof sketch]
	The tensor unit is $\K$, and the internal hom $V \Rightarrow W$ is the Banach space of bounded linear maps $V \to W$ equipped with the operator norm.
	Indeed, a contraction $U \to (V \Rightarrow W)$ is equivalently a contractive bilinear map $U \times V \to W$, and by the universal property of the projective tensor product this is equivalently a contraction $U \otimes_\pi V \to W$.

	Moreover, $\Ban$ is complete and cocomplete: products are given by $\ell^\infty$-sums, coproducts by $\ell^1$-sums,
	equalizers by closed subspaces, and coequalizers by the obvious Banach quotients.
\end{proof}

A $\Ban$-category with a single object $\star$ is precisely a monoid object in this symmetric monoidal category.
Equivalently, it is a unital Banach algebra $\cC(\star, \star) = \mathcal{A}$, with multiplication
corresponding to the composition map 
\begin{equation}
	\label{CstarA}
	\cC(\star, \star) \otimes_\pi \cC(\star, \star) \longrightarrow \cC(\star, \star),
\end{equation}
and unit corresponding to the identity morphism.
By the universal property of the projective tensor product, this is equivalently a contractive bilinear map $\mathcal{A} \times \mathcal{A} \to \mathcal{A}$ satisfying the same axioms, i.e.~it makes $\mathcal{A}$ into a unital Banach algebra.
A right $\cC$-module is then a Banach space $\mathcal{M}$ together with a contraction
\[
	\mathcal{M} \otimes_\pi \mathcal{A} \longrightarrow \mathcal{M},
\]
or in other words a contractive right Banach $\mathcal{A}$-module in the usual sense.
Thus in the one-object case, our notion of module recovers Banach modules~\cite[1.6]{CiglerLosertMichor}.\footnote{One caveat is that Banach modules in our sense automatically satisfy the condition that the unit acts trivially, which is not imposed in~\cite[1.6]{CiglerLosertMichor}.}

\begin{lem}[{e.g.~\cite[\S 4.3.10(e)]{Borceux}}]
	\begin{enumerate}
		\item The regular monomorphisms in $\Ban$ are precisely the isometric embeddings. 
		\item The regular epimorphisms in $\Ban$ are precisely the quotient morphisms, i.e.~the contractive surjections $q \colon X \to Y$ such that for every $y \in Y$ one has
			\[
				\|y\| = \inf \, \{\|x\| \mid q(x)=y\}.
			\]
			Equivalently, $q(X_{<1})=Y_{<1}$, where the subscript $1$ denotes the open unit ball.
	\end{enumerate}
\end{lem}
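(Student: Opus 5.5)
I would prove both parts directly from the definition of regular monomorphism and regular epimorphism, using the explicit limits and colimits in $\Ban$ from the proof sketch above. In each part I show that every regular morphism has the stated metric property, and conversely that every morphism with that property is regular.

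For part (a), first let $m \colon X \to Y$ be an equalizer of $f,g \colon Y \to Z$. Equalizers in $\Ban$ are the closed subspaces $\{y : f(y)=g(y)\}$ with the restricted norm, so $m$ is isomorphic to the inclusion of this subspace and hence is an isometric embedding. Conversely, let $m \colon X \to Y$ be an isometric embedding. Its image $m(X)$ is closed because $X$ is complete. I would exhibit $m$ as the equalizer of the quotient map $q \colon Y \to Y/m(X)$ and the zero map $0 \colon Y \to Y/m(X)$. This equalizer is $\ker q = m(X)$ with the norm inherited from $Y$. Since $m$ is isometric, $m$ identifies $X$ isometrically with that subspace. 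Therefore $m$ is the equalizer.

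For part (b), first let $q \colon X \to Y$ be a coequalizer of $f,g \colon Z \to X$. The coequalizer in $\Ban$ is $X/\overline{\mathrm{im}(f-g)}$ with the quotient norm. This norm satisfies exactly $\|y\| = \inf\{\|x\| : q(x)=y\}$, so $q$ is a quotient morphism in the stated sense. Conversely, let $q$ be a contractive surjection with the infimum property, and put $K = \ker q$, which is closed. Then $q$ induces a bijective contraction $X/K \to Y$. The infimum formula says that this map preserves norms, so it is an isometric isomorphism. Moreover, $X/K$ is the coequalizer of the inclusion $K \hookrightarrow X$ and the zero map $K \to X$. Therefore $q$ is a coequalizer.

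The equivalence of the infimum formula with $q(X_{<1}) = Y_{<1}$ is routine. Assume the infimum formula. If $\|y\|<1$, there is a preimage of norm $<1$, which gives $Y_{<1} \subseteq q(X_{<1})$. Contractivity gives the reverse inclusion. Now assume $q(X_{<1}) = Y_{<1}$ and rescale. For $y \neq 0$ and every $t>\|y\|$, the element $y/t$ lies in $Y_{<1}$, so it has a preimage of norm $<1$, and multiplying by $t$ gives a preimage of $y$ of norm $<t$. Together with contractivity, this yields the infimum formula, and surjectivity is automatic.

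The main obstacle is making sure that the descriptions of equalizers and coequalizers in $\Ban$ (closed subspace with the induced norm, and quotient by the closure of the image of $f-g$) really satisfy the universal property in the category of contractions. The delicate point is the coequalizer case, where one has to check that every contraction $h \colon X \to W$ with $hf = hg$ factors through the quotient with a map that is again a contraction. This holds because $h$ kills $\overline{\mathrm{im}(f-g)}$, and for any preimage $x$ of $y$ we get $\|\bar h(y)\| = \|h(x)\| \le \|x\|$; taking the infimum over $x$ gives $\|\bar h(y)\| \le \|y\|$.
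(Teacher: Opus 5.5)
Your proposal is correct and follows the paper's proof essentially verbatim. For regular monomorphisms, you identify equalizers with closed-subspace inclusions and conversely realize an isometric embedding as the kernel of the quotient map onto $Y/m(X)$. For regular epimorphisms, you identify coequalizers with quotients by the closed span of the differences $f(a)-g(a)$, and you exhibit a quotient morphism as the cokernel of its kernel inclusion; you additionally verify the coequalizer's universal property and the equivalence with $q(X_{<1})=Y_{<1}$, which the paper leaves implicit.
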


\begin{proof}
	\begin{enumerate}
		\item Every equalizer is the inclusion of a closed subspace with the induced norm.
			Conversely, every isometric embedding is the kernel of the quotient morphism onto the corresponding quotient space.
		\item Let $q \colon X \to Y$ be the coequalizer of two contractions $f,g \colon A \to X$.
			Then $q$ is obtained by quotienting $X$ by the closed linear subspace generated by the differences $f(a)-g(a)$.
			Hence $q$ is a contractive surjection and the norm on $Y$ is exactly the quotient norm, which gives the displayed formula.

			Conversely, suppose that $q \colon X \to Y$ is a quotient morphism in the stated sense.
			Let $K \coloneqq \ker(q)$, with inclusion $i \colon K \to X$.
			Then $q$ is the cokernel of $i$.
			\qedhere
	\end{enumerate}
\end{proof}

In particular, regular monomorphisms in $\Ban$ are closed under composition.

\begin{lem}
	Taking $\bot = \K$ gives a regular bigenerator in $\Ban$, and it is regularly injective.
\end{lem}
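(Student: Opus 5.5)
The plan is to verify the three properties separately, each reducing to a form of the Hahn--Banach theorem combined with the description of regular monomorphisms and epimorphisms in $\Ban$ from the preceding lemma. Throughout, $\neg A = A \Rightarrow \K$ is the dual space $A^*$ with the operator norm. A morphism $\varphi \in \Ban(A,\K)$ is a functional of norm at most $1$, and its name $\name{\varphi} \colon \K \to A^*$ is $\lambda \mapsto \lambda\varphi$.

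For condition (a) of \cref{def:regular_bigenerator}, the product $\prod_{\varphi} \K$ is the $\ell^\infty$-sum. Hence $\sharp_A(a) = (\varphi(a))_{\varphi}$ has norm $\sup_{\|\varphi\|\le 1}|\varphi(a)|$. By the Hahn--Banach theorem this supremum equals $\|a\|$ and is attained. So $\sharp_A$ is a linear isometry, in particular with closed range, and is therefore an isometric embedding. By the preceding lemma it is a regular monomorphism.

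For condition (b), the coproduct $\coprod_\varphi \K$ is $\ell^1(\Ban(A,\K))$. The map $\flat_A$ sends $(\lambda_\varphi)$ to $\sum_\varphi \lambda_\varphi \varphi$; this sum converges absolutely in $A^*$ because $\|\varphi\|\le 1$, so $\flat_A$ is a contraction. To show it is a quotient morphism, I will check the open-ball criterion $\flat_A(\ell^1_{<1}) = (A^*)_{<1}$. Given $\psi \in A^*$ with $\|\psi\|<1$, take $\psi \ne 0$ (the case $\psi=0$ is trivial) and set $\varphi = \psi/\|\psi\|$. This $\varphi$ lies in the closed unit ball, and the element $\|\psi\|\,\delta_\varphi$ has $\ell^1$-norm $\|\psi\|<1$ and maps to $\psi$. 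The reverse inclusion follows from contractivity. This step needs no Hahn--Banach and is essentially trivial, because the index set is the entire closed unit ball of $A^*$.

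For regular injectivity, let $m \colon B \to C$ be a regular monomorphism, i.e.\ an isometric embedding, and let $f \colon B \to \K$ be a contraction. Transport $f$ to the subspace $m(B)$ via the isometry; this gives a functional of norm at most $1$. The Hahn--Banach extension theorem (its real or complex version according to $\K$) then extends it to $g \colon C \to \K$ with $\|g\| = \|f\| \le 1$, so $g$ is a morphism of $\Ban$ with $gm = f$. I expect no real obstacle. The only point requiring care is bookkeeping: checking that the $\ell^\infty$ and $\ell^1$ descriptions of products and coproducts match the canonical maps in \cref{def:regular_bigenerator}, and that for $\K = \C$ one uses the complex Hahn--Banach theorem. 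Both norm-attainment in (a) and the extension property in the last step rely on Hahn--Banach.
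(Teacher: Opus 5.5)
Your proposal is correct and follows the same route as the paper. Both arguments identify products and coproducts with $\ell^\infty$- and $\ell^1$-sums, use Hahn--Banach to see that $\sharp_V$ is isometric, exhibit the preimage $\|\psi\|\,e_{\psi/\|\psi\|}$ to show that $\flat_V$ is a quotient map, and invoke the Hahn--Banach extension theorem for regular injectivity; the only cosmetic difference is that you verify the open-ball form of the quotient criterion instead of the equivalent ``preimage of equal norm'' formulation.
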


This makes $\neg V = (V \Rightarrow \K)$ the Banach space dual $V^*$.
As in \cref{ab_cogen}, the second property of a regular bigenerator is quite trivially true, while the first one is substantial and involves the Hahn--Banach separation theorem.

\begin{proof}
	Using the fact that products and coproducts in $\Ban$ are given by $\ell^\infty$-sums and $\ell^1$-sums, respectively,
	one finds that the canonical morphisms from the definition of regular bigenerator take the form
	\begin{align*}
		\sharp_V \: \colon V & \longrightarrow \ell^\infty(V^*_1) & \flat_V \: \colon \ell^1(V^*_1) & \longrightarrow V^* \\
		v & \longmapsto (\varphi(v))_{\varphi \in V^*_1} & (x_\varphi)_{\varphi \in V^*_1} & \longmapsto \sum_{\varphi \in V^*_1} x_\varphi \varphi.
	\end{align*}
	By Hahn--Banach, we have
	\[
		\|\sharp_V(v)\|_\infty = \sup_{\varphi \in V^*_1} |\varphi(v)| = \|v\|.
	\]
	for every $v \in V$.
	Thus $\sharp_V$ is an isometric embedding and hence a regular monomorphism.
	On the other hand, $\flat_V$ is a quotient morphism for the following reason.
	If $\psi \in V^*$ is nonzero, then $\psi/\|\psi\| \in V^*_1$ and we have
	\[
		\psi = \flat_V\bigl(\|\psi\| e_{\psi/\|\psi\|}\bigr)
	\]
	with $\|\|\psi\| e_{\psi/\|\psi\|}\|_1 = \|\psi\|$, while the case $\psi=0$ is trivial.
	Therefore every element of $V^*$ has a preimage of the same norm, and $\flat_V$ is a regular epimorphism.

	Finally, regular injectivity of $\K$ is precisely the statement of the Hahn--Banach extension theorem.
\end{proof}

With that in mind, \Cref{lem:neg_monos} now specializes to the fact that for every Banach space $V$,
the canonical morphism $V \to V^{**}$ is a regular monomorphism, i.e.~an isometric embedding.

\subsection{Banach modules}

Let now $\mathcal{A}$ be a unital Banach algebra, and $\cC$ the associated $\Ban$-category with a single object $\star$ via~\eqref{CstarA}.
Then there is only one corepresentable right $\mathcal{A}$-module up to isomorphism, namely the dual space $\mathcal{A}^*$ with $\mathcal{A}$-action
\begin{align*}
	\mathcal{A}^* \times \mathcal{A} & \longrightarrow \mathcal{A}^* \\
	(\varphi, a) & \longmapsto (x \mapsto \varphi(ax)).
\end{align*}
Then specializing \cref{separation,representation,extension} to this situation amounts to the following results.

\begin{theorem}
	\label{thm:banach_modules}
	\begin{enumerate}
		\item For any right Banach $\mathcal{A}$-modules $\mathcal{M}$ and $\mathcal{N}$ and any distinct contractive $\mathcal{A}$-module homomorphisms $\alpha,\beta \colon \mathcal{M} \to \mathcal{N}$, there exists a contractive $\mathcal{A}$-module homomorphism $\varphi \colon \mathcal{N} \to \mathcal{A}^*$ such that $\varphi\alpha \neq \varphi\beta$.
		\item For every right Banach $\mathcal{A}$-module $\mathcal{M}$, the canonical $\mathcal{A}$-module morphism
			\begin{equation}
				\label{representation_banach}
				\mathcal{M} \longrightarrow \prod_{\varphi \in \mathcal{M}_1^*} \mathcal{A}^*
			\end{equation}
			which sends $m \in \mathcal{M}$ to the family $(a \mapsto \varphi(ma))_{\varphi \in \mathcal{M}_1^*}$
			is an isometric embedding.
		\item If $\mathcal{N}$ is a right Banach $\mathcal{A}$-module and $\mathcal{M} \subseteq \mathcal{N}$ a closed submodule, then every contractive $\mathcal{A}$-module map $\mathcal{M} \to \mathcal{A}^*$ can be extended to a contractive $\mathcal{A}$-module map $\mathcal{N} \to \mathcal{A}^*$.
	\end{enumerate}
\end{theorem}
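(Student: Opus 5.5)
The plan is to specialize \cref{separation,representation,extension} to $\cV = \Ban$, $\bot = \K$ and the one-object $\Ban$-category $\cC$ with $\cC(\star,\star) = \mathcal{A}$. The preceding lemma shows that $\K$ is a regular bigenerator in $\Ban$, hence also a coseparator, and that it is regularly injective. So all hypotheses of the three abstract theorems hold. We already identified right $\cC$-modules with contractive right Banach $\mathcal{A}$-modules, and module morphisms with contractive $\mathcal{A}$-module homomorphisms. It remains to translate the corepresentable module and the regular monomorphisms into concrete terms.

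The first step is to identify the unique corepresentable module $\neg \cC(\star,-)$ with $\mathcal{A}^*$. Its right action, by \cref{cor:corepresentable_module}, is obtained by dualizing the composition $\mathcal{A}\otimes_\pi\mathcal{A}\to\mathcal{A}$ in its right-hand slot. Unwinding \cref{lem:neg_op_module}, one must check that this gives $(\varphi\cdot a)(x) = \varphi(ax)$ rather than $\varphi(xa)$. This is a bookkeeping check with the conventions for $\mathcal{F}_{X,Y}$, and I expect it to be the only place needing care.

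With that, part (a) is literally \cref{separation}. For part (c), a closed submodule inclusion $\mathcal{M}\subseteq\mathcal{N}$ with the restricted norm is an isometric embedding, hence a regular monomorphism in $\Ban$. By \cref{lem:module_neg_monos} it is then a regular monomorphism in $\Mod_\cC$, and \cref{extension} applied to the single corepresentable $\mathcal{A}^*$ gives the extension.

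For part (b), \cref{representation} gives an embedding $\mathcal{M}\to\prod_{\varphi\in\Ban(\mathcal{M},\K)}\mathcal{A}^*$. The hom-set $\Ban(\mathcal{M},\K)$ is exactly the unit ball $\mathcal{M}^*_1$. Formula \eqref{eq:representation_explicit} identifies the $\varphi$-component as the name of $m\otimes a\mapsto\varphi(ma)$, that is, $m\mapsto(a\mapsto\varphi(ma))$, which is the stated map. The embedding is a composite of regular monomorphisms in $\Mod_\cC$. Its components are regular monomorphisms in $\Ban$ by \cref{lem:module_neg_monos}, and these are isometric embeddings, which are closed under composition. Since products in $\Mod_\cC$ are computed pointwise by $\ell^\infty$-products, the composite is an isometric embedding.

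As a sanity check, isometry can also be verified directly. The estimate $\sup_\varphi\sup_{\|a\|\le1}|\varphi(ma)|\le\|m\|$ follows from contractivity of the action. Taking $a=1$ and using Hahn--Banach gives equality, assuming $\|1\|\le 1$, which holds since the unit $j_\star\colon\K\to\mathcal{A}$ is a contraction.
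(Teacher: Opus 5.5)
Your proposal is correct and takes essentially the paper's own route: the paper obtains this theorem by specializing \cref{separation,representation,extension} to $\cV = \Ban$, $\bot = \K$ and the one-object category with hom-object $\mathcal{A}$. Your identifications are exactly the details the paper leaves implicit: the action $(\varphi\cdot a)(x)=\varphi(ax)$ on $\mathcal{A}^*$, $\Ban(\mathcal{M},\K)=\mathcal{M}^*_1$, regular monomorphisms in $\Ban$ as isometric embeddings, and componentwise reduction via \cref{lem:module_neg_monos}; your final direct Hahn--Banach check of isometry matches the paper's remark that the result can also be proven directly.
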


Note that the product in~\eqref{representation_banach} is an $\ell^\infty$-sum, meaning that it consists of uniformly bounded families $(\varphi_\psi)_{\psi \in \mathcal{M}_1^*}$ of elements of $\mathcal{A}^*$, with the norm given by $\|(\varphi_\psi)_\psi\| = \sup_\psi \|\varphi_\psi\|$.
\Cref{thm:banach_modules} is also not difficult to prove directly, essentially by instantiating our general proofs in the concrete setting of Banach modules.
In this sense, we regard it as a warm-up example for the more involved case of operator systems considered in the next section.

\section{Example: Operator systems}
\label{operator_systems}

Our goal here is to show how operator systems can be viewed as modules in the sense of the previous sections,
and how our \cref{separation,representation} apply in this case and recover standard theorems about operator systems.
We also explain why the hypothesis of our extension theorem \cref{extension} fails here.
In order to make operator systems fit the framework of modules, we need to go beyond the usual unital case and develop a theory of non-unital operator systems.
This will be based on regularly ordered Banach spaces, and has the appealing feature that the category of operator systems is closed under the formation of kernels and quotients,
and that duals are modules again, but over a slightly different enriched category.

Throughout this section, our ground field is $\R$ unless otherwise specified.
We will see that this is sufficient to even capture the usual operator systems over $\C$.

\subsection{The category of regularly ordered Banach spaces}
\label{sec:regularly_ordered_banach_spaces}

We start by introducing the category of regularly ordered Banach spaces studied by Min~\cite{MinExponentialLaw}.

\begin{definition}[{e.g.~\cite{MinExponentialLaw}}]
	A \textbf{regularly ordered Banach space} is a Banach space $V$ equipped with a closed convex cone $V_+ \subseteq V$ such that the following hold:
	\begin{enumerate}
		\item\label{item:regularly_ordered_1}
			If $-y \le x \le y$, then $\|x\| \le \|y\|$.
		\item\label{item:regularly_ordered_2}
			For every $x \in V$ with $\|x\| < 1$, there exists $y \in V_+$ with $\|y\| < 1$ and $-y \le x \le y$.
	\end{enumerate}
\end{definition}

Both conditions can also be combined into the single equation
\[
	\|x\| = \inf \, \{\|y\| \mid -y \le x \le y,\ y \in V_+\} \qquad \forall x \in V.
\]

\begin{example}
\label{ex:aous}
	We record that every Archimedean order unit space is a regularly ordered Banach space, as is every base norm space.\footnote{We refer to~\cite{Nagel} for the definitions. In our context, all spaces are additionally assumed to be complete and to have closed positive cone.}
	\begin{enumerate}
		\item Let $V$ be an order unit space with Archimedean order unit $u$ and order unit norm\footnote{The fact that this is a norm is how the Archimedeanicity is relevant.}
			\begin{equation}
				\label{eq:order_unit_norm}
				\|x\| \coloneqq \inf \, \{r > 0 \mid -ru \le x \le ru\}.
			\end{equation}
			Then it is easy to see that $-y \le x \le y$ implies $\|x\| \le \|y\|$.
			If $\|x\| < 1$, then by definition there exists $r < 1$ with $-ru \le x \le ru$, so taking $y = ru$ verifies the second condition.
			The Archimedeanicity is used to ensure that the order unit norm is indeed a norm.

			Conversely, if $V$ is a regularly ordered Banach space and $u \in V_+$ is any element satisfying~\eqref{eq:order_unit_norm}, then $u$ is an Archimedean order unit for $V$.
			Indeed the finiteness of the norm makes $u$ an order unit, and Archimedeanicity follows from closedness of $V_+$, since $x + \varepsilon u \in V_+$ for all $\varepsilon > 0$ implies $x = \lim_{\varepsilon \to 0} (x + \varepsilon u) \in V_+$.

			Moreover, it is not hard to see that the positive contractions between two Archimedean order unit spaces correspond precisely to the positive subunital linear maps.
		\item Let $V$ be a base norm space, so that $V_+$ admits a base defined by a strictly positive functional $\tau$ and the norm is given by
			\[
				\|x\| = \inf \, \{\tau(a) + \tau(b) \mid x = a - b,\ a,b \in V_+\}.
			\]
			On $V_+$ one has $\|z\| = \tau(z)$, hence the norm is additive on positive elements.
			If $-y \le x \le y$, then $\frac{y+x}{2},\frac{y-x}{2} \in V_+$, so
			\[
				\|x\| \le  \frac{1}{2}(\tau(y+x) + \tau(y-x)) = \tau(y) = \|y\|.
			\]
			If $\|x\| < 1$, choose $a,b \in V_+$ with $x = a - b$ and $\tau(a) + \tau(b) < 1$; then $y = a+b$ satisfies $-y \le x \le y$ and $\|y\| = \tau(a) + \tau(b) < 1$.
	\end{enumerate}
\end{example}

The following definition will provide the relevant monoidal structure on the category of regularly ordered Banach spaces.

\begin{definition}[{Wittstock~\cite{WittstockOrderedTensor}, Min~\cite[Section~3.1]{MinExponentialLaw}}]
	Let $V$ and $W$ be regularly ordered Banach spaces.
	Then their \textbf{projective tensor product} $V \otimes_\pi^+ W$ is defined as follows.
	\begin{enumerate}
		\item We let $(V \otimes_{\mathrm{alg}} W)_+$ be the cone generated by the simple tensors $v \otimes w$ with $v \in V_+$ and $w \in W_+$.
		\item For $x \in (V \otimes_{\mathrm{alg}} W)_+$, define the positive projective norm
			\[
				\|x\|_+ \coloneqq
				\inf
				\left\{
					\sum_i \|v_i\| \|w_i\|
					\,\middle|\,
					x = \sum_i v_i \otimes w_i,\ v_i \in V_+,\ w_i \in W_+
				\right\}.
			\]
		\item For arbitrary $x \in V \otimes_{\mathrm{alg}} W$, define
			\[
				\|x\| \coloneqq \inf \, \{ \|y\|_+ \mid -y \le x \le y,\ y \in (V \otimes_{\mathrm{alg}} W)_+ \}.
			\]
	\end{enumerate}
	The tensor product $V \otimes_\pi^+ W$ is then the Banach completion of $V \otimes_{\mathrm{alg}} W$ with respect to this norm and positive cone the closure of $(V \otimes_{\mathrm{alg}} W)_+$.
\end{definition}

This norm is (generally strictly) upper bounded by the usual Banach space projective tensor norm.\footnote{Given a decomposition $x=\sum_i v_i\otimes w_i$, regularity lets us choose $a_i\in V_+$ and $b_i\in W_+$ such that $-a_i\leq v_i\leq a_i$ and $-b_i\leq w_i\leq b_i$ and with $\|a_i\|$ and $\|b_i\|$ arbitrarily close to $\|v_i\|$ and $\|w_i\|$, respectively.
By positivity of the right-hand side, the identities
\[
	2(a_i\otimes b_i\pm v_i\otimes w_i)
	=
	(a_i+v_i)\otimes(b_i\pm w_i)
	+
	(a_i-v_i)\otimes(b_i\mp w_i)
\]
show that $-\sum_i a_i\otimes b_i\leq x\leq\sum_i a_i\otimes b_i$.
Consequently,
\[
	\|x\|
	\leq
	\sum_i\|a_i\|\|b_i\|,
\]
which is the relevant bound.}

\begin{definition}
	\label{def:roban}
	$\ROBan$ denotes the category of regularly ordered Banach spaces and positive linear contractions, equipped with $\otimes_\pi^+$ as symmetric monoidal structure.
\end{definition}

\begin{remark}
	\label{roban_misc}
	\begin{enumerate}
		\item We leave it understood that the associators, unitors, and symmetry are the obvious ones.
			The monoidal unit is $\R$ with its usual positive cone.
		\item\label{roban_morphisms} By~\cite[Proposition~1.1]{MinExponentialLaw}, a positive linear map $f \colon V \to W$ is already contractive as soon as it is contractive on positive elements.
			This implies that a linear map $f \colon V \to W$ is a morphism in $\ROBan$ if and only if it preserves positive subnormalized elements,
			meaning that
			\[
				x \ge 0, \quad \|x\| \le 1 \qquad \Longrightarrow \qquad f(x) \ge 0, \quad \|f(x)\| \le 1
			\]
			for all $x \in V$.
		\item\label{roban_universal_property}
			Essentially by definition, the projective tensor product $V \otimes_\pi^+ W$ enjoys the following universal property~\cite[Theorem~3.2]{MinExponentialLaw}:
			morphisms $V \otimes_\pi^+ W \to U$ to another regularly ordered Banach space $U$ are in natural bijection with those bilinear maps $f : V \times W \to U$
			which satisfy
			\begin{equation}
				\label{eq:roban_bilinear_morphism}
				\begin{aligned}
					x &\ge 0, & \|x\| &\le 1, \\
					y &\ge 0, & \|y\| &\le 1
				\end{aligned}
				\qquad \Longrightarrow \qquad f(x,y) \ge 0, \quad \|f(x,y)\| \le 1
			\end{equation}
			for all $x \in V$ and $y \in W$.
	\end{enumerate}
\end{remark}

\begin{remark}
	\label{roban_internal_hom}
	It was also shown by Min~\cite[Sections~4~and~5]{MinExponentialLaw} that $\ROBan$ is a B\'enabou cosmos.
	This is a consequence of the following facts.
	\begin{enumerate}
		\item Products are given by the usual (uniformly bounded) products of Banach spaces, equipped with the componentwise ordering.
		\item Coproducts are similarly given by $\ell^1$-sums.
		\item The description of equalizers, which is already more subtle, is given in \Cref{lem:roban_equalizer} below.
		\item Min only proves the existence of coequalizers by an abstract categorical argument without giving an explicit construction,
			and all that we will need in this direction is \cref{lem:roban_special_coequalizer},
			which gives a sufficient condition for a morphism to be a regular epimorphism.
		\item\label{roban_dual}
			Min discusses the construction of hom-objects in $\ROBan$~\cite[Section~2]{MinExponentialLaw}.
			In particular, for every $V \in \ROBan$ the object $V \Rightarrow \R$ has as underlying vector space the usual Banach dual $V^*$.
			Its positive cone consists of the positive linear functionals $V^*_+$, and its norm is
			\begin{equation}
				\label{eq:roban_dual_norm}
				\|\varphi\| = \inf \, \{ \|\psi\|_{\mathrm{op}} \mid \psi \in V^*_+,\, -\psi \le \varphi \le \psi \},
			\end{equation}
			where $\|\cdot\|_{\mathrm{op}}$ denotes the usual operator norm on $V^*$.
			The existence of some such $\psi$ follows by~\cite[Theorem~3.6.7]{JamesonOrderedLinearSpaces}.
			For positive $\varphi$, this norm coincides with the operator norm, but in general it is strictly larger.
	\end{enumerate}
\end{remark}

\begin{lem}[{Min~\cite[Proposition~5.2]{MinExponentialLaw}}]
	\label{lem:roban_equalizer}
	Let $f,g \colon V \to W$ be morphisms in $\ROBan$, and let
	\[
		D \coloneqq \{x \in V \mid f(x) = g(x)\},
		\qquad
		E \coloneqq (D \cap V_+) - (D \cap V_+).
	\]
	For $x \in E$, define
	\[
		\|x\|_E \coloneqq \inf \, \{ \|y\| \mid y \in D \cap V_+,\, -y \le x \le y \}.
	\]
	Then $E$, equipped with positive cone $D \cap V_+$ and this norm, is a regularly ordered Banach space, and the canonical injection
	\[
		e \colon E \longrightarrow V
	\]
	is an equalizer of $f$ and $g$ in $\ROBan$.
\end{lem}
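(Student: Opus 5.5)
The argument has three parts: first show that $E$ is a regularly ordered Banach space, then check that $e$ is a morphism of $\ROBan$, and finally verify the universal property of the equalizer.

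\textbf{$E$ is a regularly ordered Banach space.} Set $P \coloneqq D \cap V_+$. Since $D$ is a closed subspace (as $f,g$ are continuous) and $V_+$ is a closed convex cone, $P$ is a closed convex cone, and $E = P - P$ is a subspace.

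For $x \in E$ the infimum defining $\|x\|_E$ is over a nonempty set: if $x = a - b$ with $a,b \in P$, then $y = a+b \in P$ satisfies $-y \le x \le y$ even in the order of $P$.

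Here one point needs care. The inequalities $-y \le x \le y$ should be read with respect to the cone $P$, i.e.\ $y \pm x \in P$. For $x \in D$ and $y \in P$ we have $y \pm x \in D$, so $y \pm x \in P$ holds if and only if $y \pm x \in V_+$. Hence reading the inequalities in $V$ gives the same norm.

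\textbf{Norm properties.}
\begin{itemize}
\item Seminorm properties are routine. For subadditivity, add the dominating elements.
\item Condition~\ref{item:regularly_ordered_1} of the definition of regularly ordered Banach space, applied in $V$, gives $\|x\| \le \|y\|$ whenever $-y \le x \le y$. Hence $\|x\| \le \|x\|_E$, so $\|\cdot\|_E$ is a norm and $e$ is contractive.
\item On $P$ we have $\|y\|_E = \|y\|$: take $y$ itself as dominator for one inequality, and use the previous bound for the other. So $e$ is a positive contraction.
\item Both regularity conditions for $(E, P, \|\cdot\|_E)$ hold essentially by construction. The first holds because $-y \le x \le y$ and $-z \le y \le z$ imply $-z \le x \le z$. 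The second holds because $\|y\|_E = \|y\|$ on $P$.
\end{itemize}

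\textbf{Completeness (the main obstacle).} Since $\|\cdot\|_E$ is regular, it suffices to show that every absolutely summable series converges. Let $\sum \|x_n\|_E < \infty$ and choose $y_n \in P$ with $-y_n \le x_n \le y_n$ and $\|y_n\| < \|x_n\|_E + 2^{-n}$.
\begin{itemize}
\item The series $\sum y_n$ converges in $V$ because $V$ is complete. Its limit lies in $P$ because $P$ is closed.
\item The series $\sum (y_n + x_n)$ converges as well, since $0 \le y_n + x_n \le 2y_n$. Here I plan to use that in a regularly ordered Banach space the norm is monotone on $V_+$, which is condition~\ref{item:regularly_ordered_1}. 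Its limit is again in $P$.
\item Hence $x \coloneqq \sum x_n$ converges in $V$ and lies in $E$.
\item For the tail estimate, $\pm\bigl(x - \sum_{n\le N} x_n\bigr) \le \sum_{n>N} y_n \in P$. This gives $\bigl\|x - \sum_{n \le N} x_n\bigr\|_E \le \sum_{n>N}\|y_n\| \to 0$.
\end{itemize}

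\textbf{Universal property.} Clearly $fe = ge$. Let $h \colon U \to V$ be a morphism of $\ROBan$ with $fh = gh$.
\begin{itemize}
\item Then $h(U) \subseteq D$ and $h(U_+) \subseteq P$.
\item Since $U = U_+ - U_+$ by regularity, $h$ lands in $E$. This gives a linear factorization $\bar h \colon U \to E$ with $e \bar h = h$, which is unique because $e$ is injective.
\item $\bar h$ is positive because $h(U_+) \subseteq P$.
\item For contractivity, by Remark~\ref{roban_misc}\ref{roban_morphisms} it suffices to check positive elements. For $u \in U_+$ we get $\|\bar h(u)\|_E = \|h(u)\| \le \|u\|$, using that $\|\cdot\|_E$ agrees with $\|\cdot\|$ on $P$.
\end{itemize}
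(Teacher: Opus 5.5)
Your proof is correct and complete. The paper gives no proof of this lemma; it imports it from Min's Proposition~5.2, so there is no argument in the paper to match yours against. What you have written is a self-contained, elementary verification.

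It also isolates the two facts the rest of the paper actually uses as a black box: $\|x\| \le \|x\|_E$ for $x \in E$, and $\|y\|_E = \|y\|$ for $y \in D \cap V_+$. The later lemmas on regular monomorphisms and on $\sharp_V$ rely on exactly these.

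Two small points of presentation:
\begin{itemize}
\item The definition of a regularly ordered Banach space asks for the cone to be closed in $E$ itself, that is, with respect to $\|\cdot\|_E$. This follows from closedness of $D \cap V_+$ in $V$ only via the inequality $\|x\| \le \|x\|_E$, i.e.\ continuity of $e$. You have this ingredient, but you should draw the conclusion explicitly.
\item The criterion that a normed space is complete once every absolutely summable series converges holds in every normed space. The appeal to regularity of $\|\cdot\|_E$ at that step is unnecessary.
\end{itemize}

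The completeness step itself is the right way to do it. Splitting $\sum x_n$ as $\sum (y_n + x_n) - \sum y_n$ shows that the limit lands in $E = P - P$ and not merely in $D$. The monotonicity you invoke does follow from condition~\ref{item:regularly_ordered_1}, since $0 \le a \le b$ gives $-b \le a \le b$.
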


Moreover, Min has shown that in general one may have $E \neq D$, and even when $E = D$, the norm $\|-\|_E$ need not coincide with the norm induced from $V$~\cite[Section~5.3]{MinExponentialLaw}.
Therefore regular monomorphisms in $\ROBan$ are not straightforward to characterize, but we have at least the following two partial observations.

\begin{lem}
	\label{roban_regular_embeddings}
	Every composite of regular monomorphisms in $\ROBan$ is an order embedding.
\end{lem}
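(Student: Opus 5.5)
Define an order embedding as a linear map $m\colon B\to C$ with $m(x)\ge 0$ if and only if $x\ge 0$; such a map is then automatically injective, since $m(x)=0$ gives $x\ge 0$ and $-x\ge 0$, hence $x=0$ by salience of the closed cone, which follows from condition~\ref{item:regularly_ordered_1}. Order embeddings are evidently closed under composition. It therefore suffices to show that every single regular monomorphism in $\ROBan$ is an order embedding.

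So let $m\colon B\to C$ be a regular monomorphism, i.e.\ an equalizer of some pair $f,g\colon C\to W$. By \Cref{lem:roban_equalizer}, the equalizer can be realized concretely as the inclusion $e\colon E\to C$, where $E=(D\cap C_+)-(D\cap C_+)$ with positive cone $D\cap C_+$. Since equalizers are unique up to isomorphism, there is an isomorphism $u\colon B\to E$ in $\ROBan$ with $m=e u$. Isomorphisms in $\ROBan$ are positive with positive inverse, so $u$ is an order isomorphism. It thus remains to check that $e$ is an order embedding. For $x\in E$, positivity of $e(x)=x$ in $C$ means $x\in C_+$; since also $x\in E\subseteq D$, we get $x\in D\cap C_+=E_+$. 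The converse holds because $e$ is positive, so $e$ reflects and preserves positivity. Hence $m=eu$ is an order embedding, and so is any composite of regular monomorphisms.

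The only mildly delicate point is identifying an arbitrary regular monomorphism with Min's explicit equalizer, up to isomorphism. This is routine from the universal property, provided one uses that isomorphisms in $\ROBan$ are order isomorphisms. No norm considerations are needed, which is consistent with the remark that regular monomorphisms need not be isometric.
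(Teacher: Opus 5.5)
Your proof is correct and follows the same route as the paper: reduce to a single regular monomorphism, identify it with Min's explicit equalizer from \Cref{lem:roban_equalizer}, note that this inclusion reflects positivity, and use that order embeddings are closed under composition. Your version additionally spells out two steps the paper leaves implicit: the uniqueness-up-to-isomorphism of equalizers, with isomorphisms in $\ROBan$ being order isomorphisms, and the check that $x \in E \cap C_+$ lies in $D \cap C_+$.
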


\begin{proof}
	Every regular monomorphism in $\ROBan$ is an order embedding.
	Indeed, if $m \colon U \to V$ is a regular monomorphism, then by definition it is an equalizer of a parallel pair of morphisms out of $V$.
	By \Cref{lem:roban_equalizer}, such an equalizer is an order embedding.
	Since order embeddings are clearly closed under composition, the claim follows.
\end{proof}

\begin{lem}
	\label{roban_regular_embeddings2}
	Let $V$ be a regularly ordered Banach space and $U \subseteq V$ a closed subspace.
	If the restriction of the positive cone and norm of $V$ to $U$ makes $U$ into a regularly ordered Banach space,
	then the inclusion $U \hookrightarrow V$ is a regular monomorphism in $\ROBan$.
\end{lem}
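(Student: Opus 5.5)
We have to show that $\iota \colon U \hookrightarrow V$ is an equalizer of some parallel pair in $\ROBan$. The natural candidate is the cokernel pair of $\iota$; but since coequalizers in $\ROBan$ are only known abstractly, I would instead write down an explicit pair. The plan is to take $W \coloneqq V/U$ with a suitable regularly ordered structure and compare the quotient map $q \colon V \to W$ with the zero map $0 \colon V \to W$.

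First I would equip the Banach quotient $V/U$ with the closure of the image cone $q(V_+)$. Then I would define the norm
\[
	\|w\| \coloneqq \inf\{\|q(y)\|_{V/U} \mid -q(y) \le w \le q(y),\ y\in V_+\},
\]
or more simply take the regularization of the quotient norm. The goal is an object of $\ROBan$ into which $q$ is a positive contraction. However, salience of this cone is not required by the definition given; only closedness and the regularity axioms are. So I would check that the regularization of the quotient seminorm is a norm that is complete and satisfies axioms (a) and (b). This construction might be delicate.

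A cleaner alternative, which I would try first, avoids quotients entirely by using the cokernel pair $V \amalg_U V$, which exists because $\ROBan$ is cocomplete (\cref{roban_internal_hom}). Its two coprojections $i_1, i_2$ satisfy $i_1\iota = i_2\iota$, and by \Cref{lem:roban_equalizer} their equalizer is
\[
	E=(D\cap V_+)-(D\cap V_+), \qquad D=\{x\in V \mid i_1x=i_2x\},
\]
with the norm $\|\cdot\|_E$. It then suffices to show $D\cap V_+ = U\cap V_+$ and $\|x\|_E = \|x\|_V$ on $E$. Once we have $D\cap V_+ = U_+$, we get $E=U_+-U_+=U$, because $U$ is regularly ordered with the restricted cone and is therefore generated by its positive cone (by axiom (b)). Furthermore, $\|x\|_E=\inf\{\|y\| \mid y\in U_+,\ -y\le x\le y\}=\|x\|_U$ by the combined formula for the regularly ordered space $U$, and this equals the restricted norm by hypothesis. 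So everything reduces to the inclusion $D\cap V_+\subseteq U$, i.e.\ to separating points of $V\setminus U$ by the cokernel pair.

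For this key step, I would construct a test object $Z$ and two morphisms $g_1,g_2\colon V\to Z$ agreeing on $U$ but not on a given $x\in V_+\setminus U$. These factor through the cokernel pair, so $i_1x\ne i_2x$. The choice is $Z=V\oplus_\infty \R$ (or $\ell^\infty$-products of $\R$) with $g_1=(\mathrm{id}, \varphi)$ and $g_2=(\mathrm{id}, \psi)$, where $\varphi,\psi$ are positive contractive functionals on $V$ agreeing on $U$ but with $\varphi(x)\neq\psi(x)$. To obtain them, I would apply Hahn--Banach to the Banach space $V/U$ to get a bounded functional vanishing on $U$ with nonzero value at $x$. Using \cite[Theorem~3.6.7]{JamesonOrderedLinearSpaces}, as in~\eqref{eq:roban_dual_norm}, I would write it as a difference $\psi-\varphi$ of positive functionals, then rescale both to be contractions (positivity plus contractivity on $V_+$ suffices by \cref{roban_misc}\ref{roban_morphisms}). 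This makes $g_1,g_2$ genuine $\ROBan$-morphisms once scaled. I expect this separation step to be the main obstacle, specifically making sure the Jameson decomposition applies in the regularly ordered setting. A fallback would be to use $\ell^\infty$-valued maps built from all such functionals.
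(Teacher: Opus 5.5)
Your cokernel-pair argument is correct and is essentially the paper's proof. The paper uses the same Hahn--Banach functionals vanishing on $U$, split into positive parts and rescaled to positive contractions, to show that the set-theoretic equalizer is exactly $U$. It settles the decomposition step you flagged simply by noting that $V\Rightarrow\R$ is regularly ordered and hence has generating cone. Then \cref{lem:roban_equalizer} together with regularity of $U$ identifies the $\ROBan$-equalizer with $U\hookrightarrow V$, just as in your proposal.

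The only difference is packaging. The paper assembles all these functionals into one explicit pair $r,s\colon V\to\prod_{x\in V\setminus U}\R$ with components $\lambda_x^\pm$, rather than equalizing the cokernel pair. This avoids appealing to the abstract existence of pushouts in $\ROBan$, and it also makes your quotient detour unnecessary.
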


\begin{proof}
	For every $x \in V \setminus U$, the Hahn--Banach theorem gives a bounded linear functional $\lambda_x \colon V \to \R$
	which vanishes on $U$ but not on $x$.
	Since $V \Rightarrow \R$ is regularly ordered by \cref{roban_internal_hom}, its positive cone is generating, and hence we can write
	\[
		\lambda_x = \lambda_x^+ - \lambda_x^-
	\]
	for positive bounded linear functionals $\lambda_x^+,\lambda_x^- \colon V \to \R$.
	After multiplying by the same sufficiently small positive scalar, we may assume that $\lambda_x^+$ and $\lambda_x^-$ are contractions.
	They therefore define morphisms
	\[
		r,s \: \colon \: V \longrightarrow \prod_{x \in V \setminus U} \R,
		\qquad
		r(v)_x \coloneqq \lambda_x^+(v),
		\qquad
		s(v)_x \coloneqq \lambda_x^-(v).
	\]
	Since each $\lambda_x$ vanishes on $U$, we have $r|_U=s|_U$.
	Conversely, if $x \notin U$, then $r(x)_x-s(x)_x\neq0$ by construction.
	Thus the set-theoretic equalizer of $r$ and $s$ is precisely $U$.

	By \Cref{lem:roban_equalizer}, the equalizer of $r$ and $s$ in $\ROBan$ has underlying vector space
	\[
		(U \cap V_+)-(U \cap V_+)=U_+-U_+=U,
	\]
	where the last equality follows from the fact that the positive cone of the regularly ordered space $U$ is generating.
	Moreover, its norm at every $x\in U$ is
	\[
		\inf\{\|y\| \mid y\in U_+,\ -y\le x\le y\}
		=
		\|x\|,
	\]
	which holds by the regularity assumption on $U$.
	Hence this equalizer is the given inclusion $U\hookrightarrow V$, which thus is a regular monomorphism.
\end{proof}

The following observation is largely due to Robinson and Yamamuro~\cite{RobinsonYamamuro}.

\begin{lem}
	\label{roban_half_norm}
	For $V \in \ROBan$, define the canonical half-norm
	\[
		N(a) \coloneqq \inf \, \{ \|b\| \mid b \in V_+,\, a \le b \}.
	\]
	Then
	\begin{equation}
		\label{half_norm2}
		N(a) = \sup_{\varphi \in \ROBan(V,\R)} \varphi(a)
	\end{equation}
	for every $a \in V$, and
	\[
		\max\{N(a), N(-a)\} \le \|a\| \le N(a) + N(-a).
	\]
	In particular, the norm $a \mapsto \max\{N(a),N(-a)\}$ is equivalent to the original norm.
\end{lem}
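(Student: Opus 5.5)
We prove the three assertions separately, starting with the norm inequalities, since they are elementary and feed into the Hahn--Banach step behind \eqref{half_norm2}.

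\emph{Norm inequalities.} For the upper bound $\|a\| \le N(a)+N(-a)$, take $b,c \in V_+$ with $a \le b$, $-a \le c$, $\|b\| < N(a)+\varepsilon$ and $\|c\| < N(-a)+\varepsilon$. Then $-(b+c) \le -c \le a \le b \le b+c$. Condition~\ref{item:regularly_ordered_1} of regular ordering gives $\|a\| \le \|b+c\| \le \|b\|+\|c\|$; letting $\varepsilon \to 0$ yields the bound. For the lower bound, take $y \in V_+$ with $-y \le a \le y$ and $\|y\|$ close to $\|a\|$, using the infimum formula for the norm (equivalently condition~\ref{item:regularly_ordered_2}). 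This $y$ is admissible in both infima defining $N(a)$ and $N(-a)$, so $\max\{N(a),N(-a)\}\le\|a\|$. The equivalence of norms is immediate from the two bounds, since $N(a)+N(-a) \le 2\max\{N(a),N(-a)\}$. The maximum is a genuine norm because $N$ is sublinear, which we verify in the next step, and because it dominates $\tfrac12\|a\|$.

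\emph{The easy inequality in \eqref{half_norm2}.} Let $\varphi \in \ROBan(V,\R)$ be a positive contraction and let $a \le b$ with $b \in V_+$. Then $\varphi(a) \le \varphi(b) \le \|b\|$. Taking the infimum over $b$ gives $\sup_\varphi \varphi(a) \le N(a)$.

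\emph{The reverse inequality via Hahn--Banach.} This is the main step. First we check that $N$ is sublinear. Positive homogeneity is clear. For subadditivity, if $a\le b$ and $a'\le b'$ then $a+a'\le b+b'$, and $\|b+b'\|\le\|b\|+\|b'\|$. Moreover $N$ is finite everywhere, since $N(a)\le\|a\|$. Now fix $a_0$. Define $\varphi_0(ta_0)=tN(a_0)$ on $\R a_0$. This functional is dominated by $N$: for $t\ge0$ this is homogeneity, and for $t<0$ we need $-N(a_0)\le N(-a_0)$, which follows from $0=N(0)\le N(a_0)+N(-a_0)$. By the Hahn--Banach extension theorem we obtain a linear $\varphi \le N$ with $\varphi(a_0)=N(a_0)$. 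We then verify that $\varphi$ is a morphism in $\ROBan$. For positivity, if $x\ge0$ then $-x\le 0$, so $N(-x)\le\|0\|=0$ and hence $\varphi(-x)\le0$. For boundedness and contractivity on positive elements, $|\varphi(x)|\le\max\{N(x),N(-x)\}\le\|x\|$, so $\varphi$ is bounded; in particular for $x\ge0$ we get $\varphi(x)\le N(x)\le\|x\|$. By \cref{roban_misc}\ref{roban_morphisms}, a positive map that is contractive on positive elements is a contraction, so $\varphi\in\ROBan(V,\R)$. This attains the supremum in \eqref{half_norm2}.

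The only subtle point is making sure $\varphi$ is genuinely a positive contraction. The domination $\varphi \le N$ together with $N(-x)\le 0$ for $x \ge 0$ takes care of positivity, and \cref{roban_misc}\ref{roban_morphisms} then removes any need to control $\varphi$ on non-positive elements directly.
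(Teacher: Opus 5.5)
Your proof is correct. The two norm inequalities are argued exactly as in the paper: condition~\ref{item:regularly_ordered_2} gives the lower bound, and the sandwich $-(b+c)\le a\le b+c$ together with condition~\ref{item:regularly_ordered_1} gives the upper bound.

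For \eqref{half_norm2}, however, you take a genuinely different route. The paper does not prove the formula directly. It checks that the ordinary operator norm on $V^*$ is $1$-monotonic, using condition~\ref{item:regularly_ordered_2}. It then cites Robinson--Yamamuro: their Theorem~2.4 identifies $N$ with their half-norm $x\mapsto\inf_{y\in V_+}\|x+y\|$, and their Theorem~2.1 supplies the dual supremum formula.

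You instead show directly that $N$ is a finite sublinear functional with $0\le N\le\|\cdot\|$ and $N(-x)=0$ for $x\ge0$. You then apply the dominated Hahn--Banach extension theorem to $ta_0\mapsto tN(a_0)$. Domination by $N$ yields positivity and $|\varphi(x)|\le\max\{N(x),N(-x)\}\le\|x\|$ in one stroke. This already gives $\|\varphi\|_{\mathrm{op}}\le 1$, so your appeal to \cref{roban_misc}\ref{roban_morphisms} is harmless but superfluous.

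Your argument is self-contained and slightly stronger, since it shows the supremum is attained. It also explicitly justifies that $\max\{N(a),N(-a)\}$ is a norm, which the paper leaves implicit. What the paper's route buys is the connection to the existing theory of half-norms on ordered Banach spaces, in particular the identification of $N$ with the Robinson--Yamamuro half-norm, at the cost of depending on external results.
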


\begin{proof}
	We reduce the first claim to the results of Robinson and Yamamuro~\cite{RobinsonYamamuro}.
	For regularly ordered $V$, the ordinary operator norm on $V^*$ is $1$-monotonic.
	Indeed, suppose that $0\le\varphi\le\psi$ and $\|x\|<1$.
	By condition~\ref{item:regularly_ordered_2}, there exists $y\in V_+$ with $\|y\|<1$ and $-y\le x\le y$, and hence
	\[
		|\varphi(x)|\le\varphi(y)\le\psi(y)\le\|\psi\|_{\mathrm{op}}.
	\]
	Taking the supremum over all $x$ with $\|x\|<1$ gives $\|\varphi\|_{\mathrm{op}}\le\|\psi\|_{\mathrm{op}}$, which is the desired $1$-monotonicity.
	Therefore~\cite[Theorem~2.4]{RobinsonYamamuro} identifies their half-norm $x\mapsto\inf_{y\in V_+}\|x+y\|$ with our $N$,
	while \cite[Theorem~2.1]{RobinsonYamamuro} gives~\eqref{half_norm2}, since $\ROBan(V,\R)$ is the positive part of the dual unit ball.

	We now focus on the proof of the inequalities.
	For the first inequality, it is enough to show that $\|x\| < 1$ implies $N(x) < 1$.
	Hence assume $\|x\| < 1$.
	Then condition~\ref{item:regularly_ordered_2} gives $y \in V_+$ with $\|y\| < 1$ and $-y \le x \le y$.
	This witnesses $N(x) \le \|y\| < 1$.

	For the second inequality, let $\varepsilon > 0$.
	Choose $u,v \in V_+$ with
	\begin{align*}
		x & \le u, & N(x) & \ge \|u\| - \varepsilon, \\[2pt]
		-x & \le v, & N(-x) & \ge \|v\| - \varepsilon.
	\end{align*}
	Then combining the positivity and additional inequalities gives
	\[
		-(u+v) \le x \le u+v.
	\]
	Since $u+v \in V_+$, condition~\ref{item:regularly_ordered_1} yields
	\[
		\|x\| \le \|u+v\| \le \|u\| + \|v\| \le N(x) + N(-x) + 2\varepsilon.
	\]
	Letting $\varepsilon \to 0$, we obtain the claimed $\|x\| \le N(x) + N(-x)$.
	\qedhere
\end{proof}

While coequalizers in $\ROBan$ are elusive, we provide the following sufficient criterion.

\begin{lem}
	\label{lem:roban_special_coequalizer}
	Let $q \colon V \to W$ be a morphism in $\ROBan$ such that for every $y \in W_+$ there exists $x \in V_+$ with
	\[
		q(x)=y
		\qquad\text{and}\qquad
		\|x\|=\|y\|.
	\]
	Then $q$ is a regular epimorphism.
\end{lem}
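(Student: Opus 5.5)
The plan is to exhibit $q$ as the coequalizer of its kernel pair, or more simply as the cokernel of its kernel computed in $\ROBan$. Concretely, I would show that $q$ has the following universal property: whenever $h \colon V \to Z$ is a morphism in $\ROBan$ with $h(\ker q) = 0$, there is a unique morphism $\bar h \colon W \to Z$ with $\bar h q = h$. Granting this, $q$ is the coequalizer of the pair $0, \iota \colon K \rightrightarrows V$, where $K$ is any regularly ordered Banach space with a morphism $\iota \colon K \to V$ whose image generates $\ker q$ as a closed subspace. For $K$ I would take the $\ell^1$-sum $\coprod_{k} \R$ indexed by the elements $k \in \ker q$ of norm at most $1$, with $\iota$ sending the basis vector $e_k$ to $k$; this is a contraction into $V$. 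The difficulty is that $\iota$ need not be positive, since $k$ need not be positive. I would fix this by using instead two positive maps $r, s \colon \coprod_{(a,b)} \R \to V$, indexed by pairs $a,b \in V_+$ of norm $\le 1$ with $q(a)=q(b)$, where $r(e_{(a,b)})=a$ and $s(e_{(a,b)})=b$. A morphism $h$ then coequalizes $r$ and $s$ precisely when $h(a)=h(b)$ for all such pairs.

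For the universal property, the first step is surjectivity of $q$. The hypothesis lifts positive elements, and $W = W_+ - W_+$ by regularity, so $q$ is onto. The second step is to show that any positive contraction $h$ coequalizing $r$ and $s$ kills $\ker q$. Here I need every $k \in \ker q$ to be a difference $a - b$ with $a, b \in V_+$ and $q(a) = q(b)$. This follows from regularity of $V$: choose $y \in V_+$ with $-y \le k \le y$, and set $a = (y+k)/2$ and $b = (y-k)/2$. After scaling, we may also assume $\|a\|, \|b\| \le 1$. Hence $h$ factors through $q$ as a linear map $\bar h$, and uniqueness is immediate from surjectivity.

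The third step, which I expect to be the main obstacle, is showing that $\bar h$ is a morphism in $\ROBan$. By \cref{roban_misc}\ref{roban_morphisms}, it suffices that $\bar h$ maps positive subnormalized elements to positive subnormalized elements. Given $y \in W_+$ with $\|y\| \le 1$, the hypothesis gives $x \in V_+$ with $q(x) = y$ and $\|x\| = \|y\| \le 1$. Then $\bar h(y) = h(x)$ is positive with norm at most $1$. So the norm-preserving positive lifting hypothesis is used exactly here, and continuity of $\bar h$ comes for free from Min's criterion.

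Finally, I would note that both $r$ and $s$ are legitimate $\ROBan$-morphisms out of an $\ell^1$-sum. They send basis vectors to positive elements of norm $\le 1$, hence are positive contractions by \cref{roban_internal_hom}. This exhibits $q$ as the coequalizer of $r$ and $s$, so $q$ is a regular epimorphism.
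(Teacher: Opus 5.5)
Your proof is correct. Its overall architecture matches the paper's: exhibit $q$ as the coequalizer of a parallel pair whose coequalizing maps are exactly the $h$ that are constant on the fibres of $q$. Then use surjectivity of $q$ to factor $h$ linearly, and check that $\bar h$ is a morphism via the norm-preserving positive lifts and \cref{roban_misc}\ref{roban_morphisms}.

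The difference lies in the choice of the parallel pair. The paper uses the kernel pair $p_1, p_2 \colon D \to V$, constructed as the equalizer of $q\pi_1, q\pi_2 \colon V \times V \to W$ via Min's description in \cref{lem:roban_equalizer}. For this it must first show that $D$ is spanned by its positive pairs, so that the $\ROBan$-equalizer has all of $D$ as its underlying space; the subtle norm $\|\cdot\|_E$ then never needs to be understood.

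You instead use a free presentation by the $\ell^1$-coproduct of copies of $\R$, indexed by normalized positive pairs with equal image. This requires only the explicit description of coproducts and bypasses \cref{lem:roban_equalizer} entirely. Your decomposition $k = \tfrac12(y+k) - \tfrac12(y-k)$ of kernel elements plays exactly the role of the paper's decomposition of $(u,v) \in D$ into positive pairs in $D$.

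The paper's version has the mild conceptual advantage of presenting $q$ as the coequalizer of its own kernel pair. But $\ROBan$ has pullbacks, so any regular epimorphism is automatically such a coequalizer, and nothing is lost in your more elementary route.

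The one thing you leave implicit is that $qr = qs$. This is immediate from $q(a) = q(b)$ for every index and the universal property of the coproduct.
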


\begin{proof}
	Consider the subspace of $V \times V$ defined by
	\[
		D \coloneqq \{(u,v) \in V \times V \mid q(u)=q(v)\},
	\]
	where $V \times V$ is equipped with the product ordering and norm.
	Then we first claim that
	\[
		D = \bigl(D \cap (V \times V)_+\bigr) - \bigl(D \cap (V \times V)_+\bigr).
	\]
	Indeed, let $(u,v) \in D$.
	Since $V_+$ is generating, we can write $u=u_1-u_2$ and $v=v_1-v_2$ with $u_i,v_i \in V_+$.
	Then
	\[
		(u,v)
		=
		(u_1+v_1,v_1+u_1) - (u_2+v_1,v_2+u_1),
	\]
	and both summands lie in $D \cap (V \times V)_+$.
	The first one does so trivially, and for the second we use
	\[
		q(u_2+v_1) = q(u_2)+q(v_1) = q(v_2)+q(u_1) = q(v_2+u_1),
	\]
	where the second equation follows from $q(u)=q(v)$.

	Therefore \Cref{lem:roban_equalizer} applied to the pair
	\[
		q\pi_1,\, q\pi_2 \colon V \times V \to W
	\]
	yields an equalizer $e \colon D \to V \times V$ whose underlying ordered vector space is precisely $D$.
	Consider now the two morphisms $D \to V$ given by
	\[
		p_1 \coloneqq \pi_1 e,
		\qquad
		p_2 \coloneqq \pi_2 e.
	\]
	We will prove that $q$ is the coequalizer of $p_1$ and $p_2$, which shows that $q$ is a regular epimorphism.

	To this end, let $h \colon V \to Z$ be any morphism in $\ROBan$ satisfying $hp_1 = hp_2$.
	Then $h(u)=h(v)$ whenever $q(u)=q(v)$, because every such pair $(u,v)$ comes from $D$.
	Now since $W_+$ is generating and $q$ is surjective onto $W_+$ by assumption, $q$ is also surjective on the whole underlying vector space $W$.
	We may thus define a linear map
	\[
		\overline{h} \: : \: W \longrightarrow Z
	\]
	by the rule $\overline{h}(q(x)) \coloneqq h(x)$.
	This is well-defined by what was already noted, and it satisfies $\overline{h}q=h$ by definition.
	Since the uniqueness of $\overline{h}$ is clear by surjectivity of $q$, it only remains to show that $\overline{h}$ is a morphism in $\ROBan$.

	To see that, let $y \in W_+$.
	Choose $x \in V_+$ with $q(x)=y$ and $\|x\|=\|y\|$.
	Then
	\[
		\overline{h}(y)=h(x) \in Z_+
	\]
	because $h$ is positive, and
	\[
		\|\overline{h}(y)\| = \|h(x)\| \le \|x\| = \|y\|.
	\]
	So $\overline{h}$ is a positive linear map that is contractive on the positive cone.
	By \Cref{roban_misc}\ref{roban_morphisms}, it is therefore a morphism in $\ROBan$.
\end{proof}

\begin{proposition}
	\label{prop:roban_regular_bigenerator}
	For every $V \in \ROBan$, and writing $V^* = (V \Rightarrow \R)$, we have:
	\begin{enumerate}
		\item The canonical morphism
			\[
				\sharp_V \colon V \longrightarrow \prod_{\varphi \in \ROBan(V,\R)} \R
			\]
			is a regular monomorphism.
		\item The canonical morphism
			\[
				\flat_V \colon \coprod_{\varphi \in \ROBan(V,\R)} \R \longrightarrow V^*
			\]
			is a regular epimorphism.
	\end{enumerate}
	Hence $\R$ is a regular bigenerator in $\ROBan$.
\end{proposition}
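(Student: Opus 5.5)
For the first claim, I will show that $\sharp_V$ is the inclusion of a subspace whose inherited cone and norm are regular, and then invoke \Cref{roban_regular_embeddings2}. Write $S \coloneqq \ROBan(V,\R)$ for the positive part of the dual unit ball. The product $\prod_{\varphi \in S} \R$ is $\ell^\infty(S)$ with the pointwise cone. The map $\sharp_V(a) = (\varphi(a))_{\varphi}$ is injective, since $S$ spans $V^*$ by \cref{roban_internal_hom}\ref{roban_dual} and $V^*$ separates points.

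The plan is to prove three things about $\sharp_V$.
\begin{enumerate}
	\item It is an order embedding: if $\varphi(a) \ge 0$ for all $\varphi \in S$, then $a \in V_+$. This holds because $N(-a) = \sup_{\varphi \in S} \varphi(-a) \le 0$ by \eqref{half_norm2}, so there are $b_n \in V_+$ with $-a \le b_n$ and $\|b_n\| \to 0$. Then $a + b_n \in V_+$ converges to $a$, and closedness of $V_+$ gives $a \in V_+$.
	\item It is isometric on positives: for $a \in V_+$ we have $\|a\| = N(a) = \sup_{\varphi \in S}\varphi(a) = \|\sharp_V(a)\|_\infty$. The first equality follows since $a \le b$ with $b \ge 0$ forces $-b \le a \le b$, hence $\|a\| \le \|b\|$.
	\item It is isometric everywhere: given $\|x\|$, the regularity of $V$ gives positive $y$ with $-y \le x \le y$ and $\|y\|$ close to $\|x\|$. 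Its image is a positive element of the image subspace with the same properties, so the norm on $U \coloneqq \sharp_V(V)$ computed via positive dominants in $U$ agrees with the inherited sup norm. More precisely, I want $\|\sharp_V x\|_\infty = \|x\|$. The inequality $\le$ is clear since $\sharp_V$ is a contraction. For $\ge$, I will use $\|x\| = \inf\{\|y\| : -y \le x \le y\}$ together with the order embedding and item 2. Then $\|x\|$ is the infimum of $\sup_{\varphi \in S}\varphi(y)$ over $y \in V_+$ dominating $\pm x$.
\end{enumerate}

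I expect the inequality $\ge$ in item 3 to be the main obstacle. It is not automatic: for instance, in $\ell^1$ the sup over positive functionals is a max-type quantity. The route I would try is to equip $U$ with the norm transported from $V$ rather than the inherited one, and to check that \Cref{lem:roban_equalizer} only requires the equalizer norm $\inf\{\|y\| : y \in U_+, -y \le x \le y\}$ computed with norms of positive elements. By item 2 these coincide for the ambient and transported norms. So I would rerun the proof of \Cref{roban_regular_embeddings2} directly. That proof chooses separating functionals $\lambda_x^\pm$ on $\ell^\infty(S)$ vanishing on $U$, where closedness of $U$ follows from the norm equivalence in \Cref{roban_half_norm}. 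The equalizer of $r,s$ then has underlying space $U$, cone $U \cap \ell^\infty(S)_+ = \sharp_V(V_+)$ by item 1, and norm $\inf\{\|\sharp_V y\|_\infty : y \ge 0, -y \le x \le y\} = \|x\|$ by item 2 and regularity of $V$. This identifies the equalizer with $\sharp_V \colon V \to \ell^\infty(S)$ without ever needing item 3 itself.

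For the second claim, I will apply \Cref{lem:roban_special_coequalizer}. The coproduct is $\ell^1(S)$ with the pointwise cone, and $\flat_V((t_\varphi)_\varphi) = \sum_\varphi t_\varphi \varphi$. Take $\psi \in V^*_+$ nonzero. Then $\psi/\|\psi\|_{\mathrm{op}} \in S$, and $x \coloneqq \|\psi\|_{\mathrm{op}}\, e_{\psi/\|\psi\|_{\mathrm{op}}} \in \ell^1(S)_+$ has $\|x\|_1 = \|\psi\|_{\mathrm{op}}$. This equals $\|\psi\|$ in $V^*$, since on positive functionals the norm \eqref{eq:roban_dual_norm} coincides with the operator norm. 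The case $\psi = 0$ is trivial. The hypothesis of \Cref{lem:roban_special_coequalizer} therefore holds, so $\flat_V$ is a regular epimorphism. Combining both parts shows that $\R$ is a regular bigenerator.
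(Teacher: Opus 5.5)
Your final argument is correct and is essentially the paper's own proof. For (a) it runs through the same steps: closedness of $\sharp_V(V)$ from the norm equivalence in \cref{roban_half_norm}, the order embedding, separating pairs of positive contractions $r,s$ on $\ell^\infty(S)$, and the equalizer norm of \cref{lem:roban_equalizer} computed only from norms of positive elements; your (b) is the same application of \cref{lem:roban_special_coequalizer}. You are right to abandon item 3 and your opening plan of applying \cref{roban_regular_embeddings2} with the inherited norm, because item 3 is actually false: for $V=\R^2$ with the $\ell^1$-norm and the coordinatewise cone, $x=(1,-1)$ has $\|x\|=2$ but $\|\sharp_V x\|_\infty=\max\{N(x),N(-x)\}=1$, so $\sharp_V(V)$ with the inherited norm is not regularly ordered and only your fallback route works.
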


\begin{proof}
	\begin{enumerate}
		\item We have $\ROBan(V,\R) = V^*_{+, 1}$, the set of positive contractions $V \to \R$.
			Let then
			\[
				P \coloneqq \prod_{\varphi \in V^*_{+,1}} \R = \ell^\infty(V^*_{+,1}),
				\qquad
				\sharp \: : \: V \longrightarrow P,
			\]
			where we omit the subscript from $\sharp_V$ for brevity.
			We first show that the image of $\sharp$ is closed in $P$.
			For every $v \in V$, we have
			\[
				\|\sharp(v)\|_\infty
				=
				\sup_{\varphi \in V^*_{+,1}} |\varphi(v)|
				=
				\max\{N(v),N(-v)\}
			\]
			by \cref{roban_half_norm}.
			Thus also by \cref{roban_half_norm},
			the norm induced from $P$ on $\sharp(V)$ is equivalent to the given norm on $V$, and therefore $\sharp(V)$ is a closed subspace of the Banach space $P$.

			We next show that $\sharp$ reflects positivity, which makes it into an order embedding.
			First, $\sharp(v) \in P_+$ means that $\varphi(v) \ge 0$ for every positive contraction $\varphi \colon V \to \R$.
			If $v \notin V_+$, then the Hahn--Banach separation gives a positive linear functional $\varphi \in V^*_+$ with $\varphi(v) < 0$.
			After scaling, this yields a positive contraction $\varphi \colon V \to \R$ with $\varphi(v) < 0$, a contradiction.
			Thus $v \in V_+$, so $\sharp(V) \cap P_+ = \sharp(V_+)$.

			Now let
			\[
				J \coloneqq \{(u,v) \in \ROBan(P,\R)^2 \mid u \sharp = v \sharp \},
			\]
			and define morphisms
			\[
				r,s \colon P \longrightarrow \prod_{(u,v) \in J} \R
			\]
			by
			\[
				r(x)_{(u,v)} \coloneqq u(x),
				\qquad
				s(x)_{(u,v)} \coloneqq v(x).
			\]
			By construction one has $r \sharp = s \sharp$.
			We claim that $\sharp(V)$ is exactly the set-theoretic equalizer of $r$ and $s$.
			The inclusion $\sharp(V) \subseteq \operatorname{Eq}(r,s)$ is trivial by definition.
			Conversely, let $x \in P \setminus \sharp(V)$.
			Since $\sharp(V)$ is closed, the Hahn--Banach theorem gives a bounded linear functional $\lambda \in P^*$ with $\lambda|_{\sharp(V)} = 0$ and $\lambda(x) \neq 0$.
			Using \cite[Theorem~1.3]{RobinsonYamamuro}, we can write $\lambda = \lambda_+ - \lambda_-$ with $\lambda_+,\lambda_- \in P^*_+$.
			Choose $\alpha \ge \max\{\|\lambda_+\|,\|\lambda_-\|\}$ and set $u \coloneqq \alpha^{-1}\lambda_+$ and $v \coloneqq \alpha^{-1}\lambda_-$.
			Then $u,v \colon P \to \R$ are morphisms in $\ROBan$, and they satisfy $u \sharp = v \sharp$ because $\lambda$ vanishes on $\sharp(V)$.
			But $u(x) - v(x) = \alpha^{-1}\lambda(x) \neq 0$, so $r(x) \neq s(x)$.
			This proves $\operatorname{Eq}(r,s) = \sharp(V)$.

			Therefore by \Cref{lem:roban_equalizer}, the equalizer of $r$ and $s$ in $\ROBan$ is the vector space
			\[
				E \coloneqq (\sharp(V) \cap P_+) - (\sharp(V) \cap P_+)
			\]
			equipped with the norm
			\[
				\|x\|_E = \inf \, \{ \|y\|_\infty \mid y \in \sharp(V) \cap P_+,\, -y \le x \le y \}
			\]
			and the induced order.
			We now complete the proof by showing that $E = \sharp(V)$ and that $\|\sharp(v)\|_E = \|v\|$ for every $v \in V$,
			since this makes $\sharp$ into the equalizer of $r$ and $s$ in $\ROBan$.

			Since for every $v \in V$ there exists $w \in V_+$ with $-w \le v \le w$, we have
			\[
				\sharp(V) = (\sharp(V) \cap P_+) - (\sharp(V) \cap P_+) = E.
			\]
			Concerning the norms,
			it remains to be shown that $\|v\| = \|\sharp(v)\|_E$ for every $v \in V$.
			To this end, note first that
			\begin{align*}
				\|\sharp(v)\|_E
				& =
				\inf \, \{ \|\sharp(w)\|_\infty \mid w \in V_+,\, -w \le v \le w \} \\[2pt]
				& =
				\inf \, \{ \|w\| \mid w \in V_+,\, -w \le v \le w \} \\[2pt]
				& = \|v\|,
			\end{align*}
			where the first step is by the fact that $\sharp$ is an order embedding.
			For the second step, if $w\in V_+$, then $N(-w)=0$, so \Cref{roban_half_norm} gives
			\[
				\|\sharp(w)\|_\infty=\max\{N(w),N(-w)\}=\|w\|.
			\]
			The third step is the definition of regularly ordered Banach space.
		\item Let
			\[
				C \coloneqq \coprod_{\varphi \in V^*_{+,1}} \R = \ell^1(V^*_{+,1}),
				\qquad
				\flat \: : \: C \longrightarrow \neg V,
			\]
			where we omit the subscript from $\flat_V$ for brevity.
			By \cref{roban_internal_hom}, $V^*_+$ is the cone of positive bounded functionals on $V$,
			and the norm of every positive bounded functional is its operator norm.
			The map $\flat$ is induced by the family of names
			\[
				\name{\varphi} \: : \: \R \longrightarrow V^*
			\]
			for $\varphi \in V^*_{+,1}$, so it is equal to the linear map
			\begin{align*}
				\flat \: : \: \ell^1(V^*_{+,1}) & \longrightarrow V^* \\
				(x_\varphi)_{\varphi \in V^*_{+,1}} & \longmapsto \sum_{\varphi \in V^*_{+,1}} x_\varphi \varphi.
			\end{align*}
			We conduct the proof by showing that $\flat$ satisfies the assumptions of \Cref{lem:roban_special_coequalizer}.
			This makes $\flat$ into a regular epimorphism.

			So let $\psi \in V^*_+$.
			Since the case $\psi = 0$ is trivial, we may assume $\psi \neq 0$.
			Then $\psi / \|\psi\| \in V^*_{+,1}$, and
			\[
				x \coloneqq \|\psi\| e_{\psi / \|\psi\|} \in \ell^1(V^*_{+,1})_+
			\]
			satisfies
			\[
				\flat(x) = \psi
				\qquad\text{and}\qquad
				\|x\|_1 = \|\psi\|.
			\]
			Thus \Cref{lem:roban_special_coequalizer} shows that $\flat=\flat_V$ is a regular epimorphism.
			\qedhere
	\end{enumerate}
\end{proof}

\subsection{Operator systems as modules}

We now turn to our version of the category of (nonunital) operator systems, in which we will instantiate the general theory of \cref{sec:main_results}.

There are several possible variations on the definition of nonunital operator system, some of which have been considered in the literature.
We do not claim that our choice is the most natural or useful one and elaborate on this in \cref{rem:os_comments} below.
Thus the following definition and our subsequent developments should be understood as more of a proof of concept for the idea that ``operator systems are modules in enriched category theory'' rather than as a definitive treatment of nonunital operator systems.
For simplicity, we will omit the adjective ``nonunital'' in the following.

\begin{definition}
\label{def:os}
\label{def:dual_os}
\begin{enumerate}
	\item An \textbf{operator system} is a complex $*$-vector space $S$ equipped with a regularly ordered Banach space structure on the Hermitian part of every matrix level $M_n(S)$,
		and such that the following hold:
		\begin{itemize}
			\item For every $m,n \in \N$ and every scalar matrix $\alpha \in M_{n,m}$ with $\|\alpha\| \le 1$,
				the conjugation map
				\begin{align}
					\label{eq:conjugation_map}
					\begin{split}
						M_n(S)_h & \longrightarrow M_m(S)_h \\
						x & \longmapsto \alpha^* x \alpha
					\end{split}
				\end{align}
				is positive and contractive.
			\item The \textbf{weak direct sum axiom}
				\begin{equation}
					\label{eq:weak_direct_sum_axiom}
					\left\|
					\begin{pmatrix}
						x & 0 \\
						0 & x
					\end{pmatrix}
					\right\|
					=
					\|x\|
					\qquad
					\forall x \in M_n(S)_h
				\end{equation}
				holds at every matrix level $n$.
		\end{itemize}
	\item A \textbf{dual operator system} is defined the same way, but with the weak direct sum axiom replaced by the \textbf{dual weak direct sum axiom},
		which is the condition that the map
		\begin{align*}
			M_{2n}(T)_h & \longrightarrow M_n(T)_h \\
			\begin{pmatrix}
				x & z \\
				z^* & y
			\end{pmatrix}
				    & \longmapsto x+y
		\end{align*}
		is positive and contractive at every matrix level $n$.
	\item Given (dual) operator systems $S$ and $T$, a \textbf{morphism of (dual) operator systems} is a $*$-preserving linear map $f : S \to T$
		that is \textbf{completely positive} and \textbf{completely contractive},
		meaning that the induced map
		\[
			M_n(f)_h \: : \: M_n(S)_h \longrightarrow M_n(T)_h
		\]
		is positive and contractive for every $n \in \N$.
\end{enumerate}
\end{definition}

We thus obtain categories of operator systems $\OpSys$ and dual operator systems $\OpSys^\vee$.

\begin{remark}
	\label{rem:os_comments_2}
	\begin{enumerate}
		\item
			Abstract unital operator systems are usually defined with an order unit in $S$ that makes each matrix level into an Archimedean order unit space \cite{Paulsen}.
			Via \cref{ex:aous} and the usual matrix norm identities, these operator systems are also operator systems in our sense.
			Similarly, morphisms (in the sense of \cref{def:os}) between unital operator systems are completely positive subunital maps.
			Therefore the category of unital operator systems and completely positive subunital maps is a full subcategory of $\OpSys$.
		\item
			Our primary focus is on $\OpSys$.
			We consider $\OpSys^\vee$ mainly to illustrate that our general theory and the duality statement of \cref{lem:neg_op_module}
			yield a well-behaved duality theory for operator systems, where dualization is given by levelwise dualization of regularly ordered Banach spaces.
		\item\label{rem:os_wrong_dual}
			Of course, levelwise dualization of an operator system $S$ is not the only notion of dual that is of interest for operator systems:
			one often wants the dual at level $n$ to have as positive cone the space of completely positive maps $S \to M_n$.
			We comment on this more in \cref{rem:os_tensor_closed}.
	\end{enumerate}
\end{remark}

\begin{remark}
	\label{rem:os_comments}
	We make a few comments on our definition of operator system and how it relates to other definitions of nonunital operator systems in the literature.
	\begin{enumerate}
		\item It may seem strange to equip only the hermitian part of each matrix level with a norm.
			We have made this choice mainly for convenience, as it lets us exploit Min's results on the category $\ROBan$ of regularly ordered Banach spaces over $\R$,
			which we have made copious use of in the previous subsection.
			It seems plausible that one can prove analogous results also for \emph{complex} Banach spaces equipped with an involution,
			in which case we could develop the theory of this subsection with the norms defined on each entire $M_n(S)$.
			\item The norms on operator systems in our sense do not generally satisfy Ruan's direct sum axiom~\cite{RuanOperatorSpaces}, which would say
			\begin{equation}
				\label{eq:ruan_direct_sum_axiom}
				\left\|
				\begin{pmatrix}
					x & 0 \\
					0 & y
				\end{pmatrix}
				\right\|
				=
				\max\{\|x\|,\|y\|\}
			\end{equation}
			for all $x, y \in M_n(S)$, of which~\eqref{eq:weak_direct_sum_axiom} is merely the special case where $x=y$ and both are in the hermitian part.
			Concrete examples of this failure can be obtained by noting that our operator systems are closed under $\ell^1$-sums:
			if $S$ and $T$ are operator systems, then we define their \textbf{$\ell^1$-sum} $S \oplus_1 T$
			as the operator system whose underlying $*$-vector space is $S\oplus T$,
			and whose matrix levels
			\[
				M_n(S\oplus_1 T)_h
				=
				M_n(S)_h \oplus_1 M_n(T)_h,
			\]
			are equipped with the componentwise positive cone and the norm
			\[
				\|(x,y)\| \coloneqq \|x\|+\|y\|.
			\]
			This is again an operator system:
			each matrix level is regularly ordered because this $\ell^1$-sum is the coproduct in $\ROBan$,
			while the other two conditions are straightforward to check.

			In this new operator system, we have
			\[
				\left\|
				\begin{pmatrix}
					(x,0) & (0,0) \\
					(0,0) & (0,y)
				\end{pmatrix}
				\right\|
				=
				\left\|
				\begin{pmatrix}
					x & 0 \\
					0 & 0
				\end{pmatrix}
				\right\|
				+
				\left\|
				\begin{pmatrix}
					0 & 0 \\
					0 & y
				\end{pmatrix}
				\right\|
				= \|x\| + \|y\|,
			\]
			while
			\[
				\max\{\|(x,0)\|,\|(0,y)\|\} = \max\{\|x\|,\|y\|\},
			\]
			so that~\eqref{eq:ruan_direct_sum_axiom} fails unless $x=0$ or $y=0$.

			The full direct sum axiom~\eqref{eq:ruan_direct_sum_axiom} does not look like something that can be expressed in terms of the enriched category theory of modules.
			However, it can plausibly be captured by the more expressive framework of \emph{enriched algebraic theories}~\cite{LucyshynWrightEnrichedAlgebraicTheories}.
			Along these lines, we also conjecture that there is an enriched algebraic theory---with enrichment in the category of complex Banach spaces---whose models are precisely operator spaces.
	\end{enumerate}
	There are several other definitions of nonunital operator systems in the literature, which we review now.
	Among the following notions, the first one is most closely related to our \cref{def:os}.
	\begin{enumerate}[resume]
		\item Schreiner~\cite{SchreinerMatrixRegular} has considered \emph{matrix regular operator spaces},
			which are operator spaces where the hermitian part of each matrix level is a regularly ordered Banach space,
			and such that the conjugation maps~\eqref{eq:conjugation_map} are positive and the involution is isometric.
			In particular, a matrix regular operator space in Schreiner's sense is an operator system in our sense,
			but it is equipped with a full operator space structure satisfying Ruan's direct sum axiom.
		\item Werner~\cite{WernerSubspaces} has studied matrix ordered operator spaces and characterized, via a modified numerical radius,
			those which can be realized as $*$-invariant subspaces of some $\mathcal{B}(\mathcal{H})$ up to complete order isomorphism respecting the matrix norm topology.

			More recently, Kennedy, Kim, and Manor~\cite{KennedyKimManorNonunital} have used Werner's notion---reformulated via the canonical inclusion into the partial unitization being completely isometric---and proven a dual equivalence between the category of Werner's operator systems and a certain category of noncommutative convex sets.
		\item Russell~\cite{RussellOrderedSelfAdjoint} has combined the norm and positive cone into a \emph{gauge},
			which for unital operator systems are given by
			\[
				\gamma_n(x) \coloneqq \inf \, \{ t > 0 \mid x \le t \cdot 1_n \}.
			\]
			He has studied corresponding \emph{matrix gauge $*$-vector spaces},
			where one has an abstract gauge at each matrix level which does not necessarily arise from a norm and positive cone,
			and a coreflective subcategory of \emph{normal} matrix ordered $*$-operator spaces~\cite[Theorem~4.2]{RussellOrderedSelfAdjoint}.
			This gives a very broad categorical framework for nonunital operator systems.
			\item Recently, Blecher and Hay~\cite{BlecherHayBaseNormSpaces} have proposed a matrix-ordered generalization of base norm spaces as a framework for duals of operator systems.
				A complete complex noncommutative base norm space in their sense is an operator system in our sense:
				their base norm formula makes every Hermitian matrix level regularly ordered, scalar conjugation is positive and contractive, and their matrix norms satisfy Ruan's full direct sum axiom.
				
		\end{enumerate}
\end{remark}

Next, we will show how to obtain $\OpSys$ as a category of modules in our sense.
As is standard, we abbreviate cb = ``completely bounded'' and cp = ``completely positive''.

\begin{definition}
	Let $\mathsf{CMat}$ be the $\ROBan$-category having finite-dimensional complex matrix algebras $M_n$ as objects, and where the hom-objects $\mathsf{CMat}(M_n, M_m)$ are given by the spaces of $*$-preserving linear maps $M_n \to M_m$ equipped with the cb operator norm and the cone of cp maps.
\end{definition}

\begin{lem}
	$\mathsf{CMat}$ is indeed a $\ROBan$-category.
\end{lem}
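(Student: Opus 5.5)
To show that $\mathsf{CMat}$ is a $\ROBan$-category, I would check four things in order: each hom-object is a regularly ordered Banach space; the identity maps $j_{M_n}\colon \R \to \mathsf{CMat}(M_n,M_n)$ are morphisms in $\ROBan$; composition is a morphism $\mathsf{CMat}(M_m,M_k)\otimes_\pi^+\mathsf{CMat}(M_n,M_m)\to\mathsf{CMat}(M_n,M_k)$; and associativity and unitality hold. The last point is automatic, because composition and identities are the usual ones for linear maps and the coherence isomorphisms in $\ROBan$ are the obvious ones. Smallness is clear, since the objects are indexed by $n\in\N$.

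\textbf{Hom-objects.} Let $V\coloneqq\mathsf{CMat}(M_n,M_m)$, the finite-dimensional real space of $*$-preserving maps. It is complete, and the cone of cp maps is closed and convex. For condition~\ref{item:regularly_ordered_1}, suppose $-\psi\le\varphi\le\psi$ with $\psi$ cp. A cp map attains its cb norm at the unit, so $\|\psi\|_{cb}=\|\psi(1)\|$. By the standard fact about cp-dominated maps (Wittstock/Paulsen, e.g.\ via the off-diagonal $2\times2$ trick: $\psi\pm\varphi$ cp implies that $\begin{pmatrix}\psi&\varphi\\\varphi&\psi\end{pmatrix}$ is cp on $M_2(M_n)$), we get $\|\varphi\|_{cb}\le\|\psi(1)\|=\|\psi\|_{cb}$.

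For condition~\ref{item:regularly_ordered_2}, the key input is Wittstock's decomposition theorem in its sharp form: every self-adjoint cb map $\varphi$ into $M_m$, which is injective, satisfies
\[
\|\varphi\|_{cb}=\min\{\|\psi\|_{cb}:\psi \text{ cp},\ \psi\pm\varphi\ \text{cp}\}
\]
\cite{Paulsen}. If the version I recall gives $\varphi=\psi_1-\psi_2$ with $\|\psi_1+\psi_2\|_{cb}=\|\varphi\|_{cb}$, then $\psi=\psi_1+\psi_2$ works, and this requires the real/self-adjoint form, obtained by averaging $\varphi$ with $\varphi^*$. This step, verifying regularity of the cb-norm cone, is the one I expect to be the main obstacle, since it relies on importing the precise norm-attaining form of Wittstock's theorem rather than proving it from scratch.

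\textbf{Identity.} Under the identification of $\R$ with $j_{M_n}$, a positive scalar $t\le1$ is sent to $t\cdot\mathrm{id}$, which is cp with cb norm $t\le1$, so $j_{M_n}$ is a positive contraction.

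\textbf{Composition.} By the universal property in \cref{roban_misc}\ref{roban_universal_property}, it suffices to check that composition is bilinear and sends pairs of cp maps with cb norm at most $1$ to a cp map with cb norm at most $1$. This holds because compositions of cp maps are cp and the cb norm is submultiplicative. This completes the verification.
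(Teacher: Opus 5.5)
Your proposal is correct and follows the paper's overall plan. The shared steps are: the cb norm of a cp map is $\|\psi(1)\|$; condition~\ref{item:regularly_ordered_2} comes from Wittstock's decomposition theorem; composition and units go through the universal property of $\otimes_\pi^+$; and associativity and unitality are inherited from linear maps. The paper simply cites \cite[Theorem~8.5]{Paulsen} for the Wittstock step, so it is not really an obstacle.

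The one genuinely different step is condition~\ref{item:regularly_ordered_1}. The paper argues directly. For self-adjoint $X\in M_k(M_n)$ with $\|X\|\le 1$ it writes $X=X_+-X_-$ with $X_++X_-=|X|\le 1$. It then sandwiches $-M_k(\Psi)(1)\le M_k(\Phi)(X)\le M_k(\Psi)(1)$, and handles general $X$ via the self-adjoint dilation $\begin{psmallmatrix}0&X\\X^*&0\end{psmallmatrix}$.

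You instead invoke the $2\times2$ block map. To make this complete, record the two facts you only allude to.
\begin{enumerate}
\item The block map is cp. It equals $\tfrac12\,\mathrm{id}_{M_2}\otimes(\psi+\varphi)$ plus $\tfrac12$ times the conjugate of $\mathrm{id}_{M_2}\otimes(\psi-\varphi)$ by $\mathrm{diag}(1,-1)$.
\item It gives the norm bound. Apply it at matrix level $k$, after the canonical shuffle, to the positive matrix $\begin{psmallmatrix}1&X\\X^*&1\end{psmallmatrix}$ with $\|X\|\le1$. This yields $\begin{psmallmatrix}M_k(\psi)(1)&M_k(\varphi)(X)\\M_k(\varphi)(X)^*&M_k(\psi)(1)\end{psmallmatrix}\ge0$, and hence $\|M_k(\varphi)(X)\|\le\|\psi(1)\|$.
\end{enumerate}

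Your route handles non-self-adjoint $X$ in one stroke. It also ties condition~\ref{item:regularly_ordered_1} to the same $2\times2$ machinery that underlies Wittstock's theorem, which you use for condition~\ref{item:regularly_ordered_2} anyway. The paper's route is more elementary: it needs only the Jordan decomposition in $M_k(M_n)$ and the fact that $-A\le B\le A$ implies $\|B\|\le\|A\|$.
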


\begin{proof}
	We start with the fact that the hom-objects are objects of $\ROBan$.
	The nontrivial part is to verify that the cb norm satisfies conditions~\ref{item:regularly_ordered_1} and~\ref{item:regularly_ordered_2}.
	For condition~\ref{item:regularly_ordered_1}, let $\Phi,\Psi \colon M_n \to M_m$ be $*$-preserving linear maps where $-\Psi \le \Phi \le \Psi$ in the cp order.
	We then need to prove that $\|\Phi\|_{\mathrm{cb}} \le \|\Psi\|_{\mathrm{cb}}$.
	Since every $M_k(\Psi)$ is positive, its operator norm is attained at the identity matrix~\cite[Corollary~2.9]{Paulsen}, so that
	\[
		\|\Psi\|_{\mathrm{cb}} = \sup_{k \in \N} \|M_k(\Psi)\| = \sup_{k \in \N} \|M_k(\Psi)(1_{kn})\| = \|\Psi(1_n)\|.
	\]
	In particular, for the desired $\|\Phi\|_{\mathrm{cb}} \le \|\Psi\|_{\mathrm{cb}}$,
	it is enough to prove the inequality
	\[
		\|M_k(\Phi)(X)\| \le \|M_k(\Psi)(1_{kn})\|
	\]
	for every $X \in M_k(M_n)$ with $\|X\| \le 1$ and every matrix level $k$.
	For self-adjoint $X$, we can write $X = X_+ - X_-$ with $X_+,X_- \ge 0$ and $X_+ + X_- = |X| \le 1_{kn}$.
	Then the inequalities $M_k(\Psi) \pm M_k(\Phi) \ge 0$ imply
	\[
		-M_k(\Psi)(1_{kn}) \le -M_k(\Psi)(|X|) \le M_k(\Phi)(X) \le M_k(\Psi)(|X|) \le M_k(\Psi)(1_{kn}).
	\]
	Hence indeed $\|M_k(\Phi)(X)\| \le \|M_k(\Psi)(1_{kn})\|$.
	For a general $X \in M_k(M_n)$ with $\|X\| \le 1$, consider its self-adjoint dilation
	\[
		\begin{pmatrix}
			0 & X \\
			X^* & 0
		\end{pmatrix}
		\in M_{2k}(M_n).
	\]
	Its square is $\begin{psmallmatrix} XX^* & 0 \\ 0 & X^*X \end{psmallmatrix}$, which has norm $\|X\|^2$, so the dilation itself has norm $\|X\|$.
	Also its image under $M_{2k}(\Phi)$ is
	\[
		\begin{pmatrix}
			0 & M_k(\Phi)(X) \\
			M_k(\Phi)(X)^* & 0
		\end{pmatrix},
	\]
	which likewise has norm $\|M_k(\Phi)(X)\|$.
	Applying the self-adjoint case at matrix level $2k$ therefore yields the desired bound, so that condition~\ref{item:regularly_ordered_1} holds.

	Condition~\ref{item:regularly_ordered_2} follows from Wittstock's decomposition theorem~\cite[Theorem~8.5]{Paulsen}: if $\|\Phi\|_{\mathrm{cb}} < 1$, then there exists a cp map $\Psi$ with $\|\Psi\|_{\mathrm{cb}} \le \|\Phi\|_{\mathrm{cb}} < 1$ and $\Psi \pm \Phi$ itself cp, that is, with $-\Psi \le \Phi \le \Psi$.
	Overall, each hom-object $\mathsf{CMat}(M_n,M_m)$ is therefore indeed a regularly ordered Banach space.

	It remains to consider composition.
	For fixed $\ell,m,n$, the usual composition of linear maps defines a bilinear map
	\[
		\mathsf{CMat}(M_m,M_\ell) \times \mathsf{CMat}(M_n,M_m) \longrightarrow \mathsf{CMat}(M_n,M_\ell).
	\]
	This bilinear map is positive, because the composite of cp maps is again cp.
	It is also contractive with respect to the cb norm, since
	\[
		\|\Phi \Psi\|_{\mathrm{cb}} \le \|\Phi\|_{\mathrm{cb}} \|\Psi\|_{\mathrm{cb}}
	\]
	for cb maps.
	Hence composition is a positive contractive bilinear morphism, so by the universal property of the projective tensor product it induces a composition morphism
	\[
		\mathsf{CMat}(M_m,M_\ell) \otimes_\pi^+ \mathsf{CMat}(M_n,M_m) \longrightarrow \mathsf{CMat}(M_n,M_\ell)
	\]
	in $\ROBan$.
	Also the unit map 
	\begin{align*}
		\R & \longrightarrow \mathsf{CMat}(M_n,M_n) \\
		1 & \longmapsto \id_{M_n}
	\end{align*}
	is a positive contraction.
	The associativity and unitality axioms for composition follow from the corresponding properties of composition of linear maps.
\end{proof}

\begin{theorem}
	\label{thm:op_sys_as_modules}
	There are equivalences of categories:
	\begin{enumerate}
		\item ${}_{\mathsf{CMat}} \Mod \simeq\mathsf{OpSys}$.
		\item $\Mod_\mathsf{CMat} \simeq \OpSys^\vee$.
	\end{enumerate}
\end{theorem}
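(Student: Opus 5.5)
The plan is to construct explicit functors in both directions and check that they are mutually inverse up to natural isomorphism. Throughout, I use that the action of a left $\mathsf{CMat}$-module is a morphism $\mathsf{CMat}(M_n,M_m)\otimes_\pi^+\mathcal{F}(M_n)\to\mathcal{F}(M_m)$. By the universal property in \cref{roban_misc}\ref{roban_universal_property}, this is the same as a bilinear map $(\Phi,x)\mapsto\Phi\cdot x$ that sends positive subnormalized $\Phi$ and $x$ to positive subnormalized elements. Since the cones are generating, this bilinear map is defined on \emph{all} $*$-preserving maps $\Phi$, not only the cp ones, and this is what allows non-positive maps to be used in the reconstruction step below.

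\textbf{From operator systems to modules.} Given $S\in\OpSys$, I set $\mathcal{F}_S(M_n)\coloneqq M_n(S)_h$ and let $\Phi$ act by $\id_S\otimes\Phi$, that is, $\sum_i s_i\otimes a_i\mapsto\sum_i s_i\otimes\Phi(a_i)$. Associativity and unitality are immediate. The key step is positivity and contractivity of the action. Let $\Phi$ be cp with $\|\Phi\|_{\mathrm{cb}}\le1$. By Choi--Kraus, $\Phi(a)=V^*(a\otimes 1_k)V$ with $\|V\|^2=\|\Phi\|_{\mathrm{cb}}\le 1$, so that $(\id\otimes\Phi)(x)=V^*(x\oplus\cdots\oplus x)V$.
\begin{itemize}
\item Iterating the weak direct sum axiom~\eqref{eq:weak_direct_sum_axiom} gives $\|x^{\oplus 2^r}\|=\|x\|$.
\item Choosing $2^r\ge k$ and composing with the compression onto the first $k$ blocks, which is a contractive conjugation, gives $\|x^{\oplus k}\|\le\|x\|$.
\item The conjugation axiom~\eqref{eq:conjugation_map} then shows that $(\id\otimes\Phi)(x)$ is positive with norm at most $\|x\|$ whenever $x\ge0$.
\end{itemize}
On morphisms, a completely positive completely contractive map $f$ gives the family $M_n(f)_h$, which clearly commutes with the action.

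\textbf{From modules to operator systems.} Given a left module $\mathcal{F}$, let $S_h\coloneqq\mathcal{F}(M_1)$ and $S\coloneqq S_h\otimes_\R\C$ with the evident involution.
\begin{itemize}
\item For a hermitian matrix unit combination $h\in M_n$, consider $\iota_h\colon M_1\to M_n$, $c\mapsto ch$, and $\omega_h\colon M_n\to M_1$, $a\mapsto\mathrm{tr}(ha)$. Both are $*$-preserving, hence elements of the hom-objects.
\item Choose a real basis $(h_k)$ of $(M_n)_h$ with trace-dual basis $(g_k)$. Then $\id_{M_n}=\sum_k\iota_{g_k}\omega_{h_k}$.
\item Using the extended bilinear action together with associativity and unitality, the maps $x\mapsto\sum_k g_k\otimes(\omega_{h_k}\cdot x)$ and $\sum_k g_k\otimes s_k\mapsto\sum_k\iota_{g_k}\cdot s_k$ are mutually inverse linear bijections $\mathcal{F}(M_n)\cong M_n(S)_h$.
\end{itemize}
I transport the norm and cone of $\mathcal{F}(M_n)$ to $M_n(S)_h$ along this bijection; regular ordering is then automatic. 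Next I check that, under this identification, the action of an arbitrary $\Phi$ becomes $\id_S\otimes\Phi$, which follows by decomposing $\Phi$ through the same rank-one maps. The axioms then follow from the module structure:
\begin{itemize}
\item the conjugation axiom is the action of the cp contraction $a\mapsto\alpha^*a\alpha$;
\item the weak direct sum axiom comes from the cp contraction $a\mapsto\mathrm{diag}(a,a)$, which gives $\le$, together with the compression back to the corner, which gives $\ge$.
\end{itemize}
A module morphism $\eta$ commutes with the $\iota_h$ and $\omega_h$, so it is determined by $\eta_{M_1}$ and equals $M_n(\eta_{M_1})_h$ at level $n$. Hence morphisms correspond exactly to completely positive completely contractive maps. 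The two composites are naturally isomorphic to the identities by construction.

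\textbf{Right modules.} The same reconstruction applies, with the action $\mathcal{F}(M_m)\otimes\mathsf{CMat}(M_n,M_m)\to\mathcal{F}(M_n)$ and the roles of $\iota_h$ and $\omega_h$ interchanged. Under the trace pairing, $\Phi$ acts through its adjoint $\Phi^\dagger(b)=\sum_i V_ibV_i^*$. Contractivity of this action for general cp contractions reduces, via the Kraus form, to a conjugation followed by the block-diagonal summing map $\begin{psmallmatrix}x&z\\z^*&y\end{psmallmatrix}\mapsto x+y$, iterated. The dual weak direct sum axiom is exactly the action of $a\mapsto\mathrm{diag}(a,a)$, so it is both implied by and sufficient for the module structure.

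\textbf{Main obstacle.} I expect the hardest part to be the reconstruction step: showing that the identification $\mathcal{F}(M_n)\cong M_n(S)_h$ intertwines the abstract action of \emph{every} hermitian map with $\id_S\otimes\Phi$. This requires careful use of bilinearity on non-positive hom-elements. In the first direction, the delicate point is the reduction of arbitrary cp contractions to conjugations and weak direct sums.
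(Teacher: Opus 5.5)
Your proposal is correct and follows the paper's proof. It uses the same Kraus reduction of the action to scalar conjugations and iterated (dual) weak direct sums, the same determination of module morphisms by their $M_1$-component, and the same transport of norms and cones along $\mathcal{F}(M_n)\cong M_n(S)_h$. One step is left implicit: $x\oplus\cdots\oplus x\ge 0$ for $x\ge 0$. It holds because this element is a sum of corner embeddings, each a scalar conjugation. The only real difference is how you get the reconstruction isomorphism. You build it by hand from the splitting $\id_{M_n}=\sum_k\iota_{g_k}\omega_{h_k}$ and check directly that it intertwines every $\Phi$ with $\id_S\otimes\Phi$. The paper instead obtains it from the Morita equivalence of \cref{ex:mor} and \cref{ex:vect_modules} after forgetting to $\Vect_\R$, which is exactly what your splitting makes explicit.
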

\begin{proof}
	For the first claim, we construct a functor
	\[
		\mathsf{E} \: : \: \OpSys \longrightarrow {}_{\mathsf{CMat}} \Mod
	\]
	and then prove that it is an equivalence.

	Given an operator system $S$, we define a left $\mathsf{CMat}$-module $\mathsf{E}(S)$ by
	\[
		\mathsf{E}(S)(M_n) \coloneqq M_n(S)_h,
	\]
	where for a $*$-preserving linear map $\Psi \colon M_n \to M_m$, the action of $\Psi$ on $\mathsf{E}(S)$ is given by
	\begin{align*}
		M_n(S)_h & \longrightarrow M_m(S)_h \\
		x & \longmapsto \Psi(x),
	\end{align*}
	where $\Psi$ is applied in the obvious way that acts as identity on the $S$ tensor factor.
	The associativity and unitality axioms for a module follow from the corresponding properties of composition of linear maps.

	To finish the construction of $\mathsf{E}(S)$, we verify that the action maps
	\[
		\mathsf{CMat}(M_n,M_m) \otimes_\pi^+ M_n(S)_h \longrightarrow M_m(S)_h
	\]
	are indeed morphisms in $\ROBan$, or by \cref{roban_misc}\ref{roban_universal_property} equivalently that the bilinear action maps
	\[
		\mathsf{CMat}(M_n,M_m) \times M_n(S)_h \longrightarrow M_m(S)_h
	\]
	satisfy~\eqref{eq:roban_bilinear_morphism}.
	For this, first note that the weak direct sum axiom implies, by iteration and compression from a larger binary power if necessary, that
	\[
		\left\|
		\begin{pmatrix}
			x & & 0 \\
			  & \ddots & \\
			0 & & x
		\end{pmatrix}
		\right\|
		= \|x\|
	\]
	holds for every matrix size on the left.
	For positive $x$, the diagonal matrix is positive because it is a sum of corner embeddings obtained from scalar conjugation maps.
	If now $\Psi \colon M_n \to M_m$ is cp with $\|\Psi\|_{\mathrm{cb}} \le 1$,
	then the Kraus representation and the identity $\|\Psi(1_n)\| = \|\Psi\|_{\mathrm{cb}}$~\cite[Props.~3.6 and~4.7]{Paulsen} give a scalar matrix $\alpha$ with $\|\alpha\| \le 1$ such that
	\begin{equation}
		\label{eq:stinespring}
		\Psi(x)
		=
		\alpha^*
		\begin{pmatrix}
			x & & 0 \\
			  & \ddots & \\
			0 & & x
		\end{pmatrix}
		\alpha.
	\end{equation}
	Thus by \cref{def:os}, we can conclude that $x \ge 0$ and $\|x\| \le 1$ indeed implies $\Psi(x) \ge 0$ and $\|\Psi(x)\| \le 1$.
	Therefore $\mathsf{E}(S)$ is indeed a left $\mathsf{CMat}$-module.

	On morphisms, a morphism $f \colon S \to T$ of operator systems is sent to the module morphism whose component at $M_n$ is
	\[
		M_n(f)_h \: : \: M_n(S)_h \longrightarrow M_n(T)_h.
	\]
	These components are positive contractions by the definition of morphisms in $\OpSys$,
	and they commute with the $\mathsf{CMat}$-actions since scalar matrix maps commute with applying $f$ entrywise.
	The preservation of composition and identities is clear.
	This completes the construction of the functor $\mathsf{E}$.

	To show that $\mathsf{E}$ is an equivalence, we prove that it is fully faithful and essentially surjective.
	Faithfulness is immediate, since a $*$-preserving complex-linear map $f : S \to T$ is determined by its restriction to hermitian elements,
	which is the component of $\mathsf{E}(f)$ at $M_1$.
	For fullness, let $\eta \colon \mathsf{E}(S) \to \mathsf{E}(T)$ be a morphism of $\mathsf{CMat}$-modules.
	The component $\eta_{M_1} \colon S_h \to T_h$ extends uniquely to a $*$-preserving complex-linear map $f \colon S \to T$.
	For $s \in S_h$ and $a \in M_{n,h}$, we consider the element of $\mathsf{CMat}(M_1,M_n)$ determined by $1 \mapsto a$,
	the action of which is preserved by $\eta$, so that
	\[
		\eta_{M_n}(s \otimes a) = \eta_{M_1}(s) \otimes a
		\qquad
		\forall s \in S_h.
	\]
	Since such tensors span $M_n(S)_h$, we have $\eta_{M_n} = M_n(f)_h$ for every $n$.
	Each $\eta_{M_n}$ is a morphism in $\ROBan$, so $M_n(f)_h$ is positive and contractive for every $n$.
	Thus $f$ is a morphism of operator systems, and $\eta = \mathsf{E}(f)$ holds by construction.

	For the first claim, it thus remains to prove essential surjectivity.
	Let $\mathcal{F}$ be a left $\mathsf{CMat}$-module.
	After forgetting norms and positive cones, $\mathsf{CMat}$ becomes the full $\Vect_\R$-subcategory on the objects $(M_n)_h$.
	Since $(M_1)_h = \R$, \cref{ex:mor} and \cref{ex:vect_modules} identify the underlying vector spaces of $\mathcal{F}$ as
	\begin{align*}
		\mathcal{F}(M_n) & \cong \mathcal{F}(M_1)\otimes_\R \mathsf{CMat}(M_1, M_n) \\
						 & \cong \mathcal{F}(M_1)\otimes_\R M_{n,h} \\
						 & \cong (\mathcal{F}(M_1)_{\C} \otimes_{\C} M_n)_h.
	\end{align*}
	Let now $S \coloneqq \mathcal{F}(M_1)_\C$.
	We transport the regularly ordered Banach space structure of $\mathcal{F}(M_n)$ along these isomorphisms to $M_n(S)_h$.
	The scalar conjugation maps in \cref{def:os} are actions by cp contractions in $\mathsf{CMat}$,
	and hence are positive and contractive.
	The binary diagonal map $x \mapsto \begin{psmallmatrix} x & 0 \\ 0 & x \end{psmallmatrix}$ is also induced by a cp contraction, giving the inequality $\|\begin{psmallmatrix} x & 0 \\ 0 & x \end{psmallmatrix}\| \le \|x\|$.
	The reverse inequality follows by compressing back to either diagonal corner, again using a cp contraction.
	Thus $S$ is an operator system in the sense of \cref{def:os}.
	By construction, the displayed isomorphisms assemble to an isomorphism $\mathcal{F} \cong \mathsf{E}(S)$ of $\mathsf{CMat}$-modules.
	Hence $\mathsf{E}$ is essentially surjective, proving the first equivalence.

	The proof of the second claim is largely analogous.
	We construct a functor
	\[
		\mathsf{E}^{\vee} \: : \: \OpSys^\vee \longrightarrow \Mod_\mathsf{CMat}
	\]
	sending a dual operator system $T$ to the right $\mathsf{CMat}$-module $\mathsf{E}^{\vee}(T)$ with $\mathsf{E}^{\vee}(T)(M_n) \coloneqq M_n(T)_h$,
	with $\Psi \in \mathsf{CMat}(M_n,M_m)$ acting by
	\begin{align*}
		M_m(T)_h & \longrightarrow M_n(T)_h \\
		x & \longmapsto \Psi^\dagger(x),
	\end{align*}
	and where $\Psi^\dagger$ denotes the trace adjoint.\footnote{The trace adjoint of a linear map $\Psi \colon M_n \to M_m$
		is the unique linear map $\Psi^\dagger \colon M_m \to M_n$ such that $\mathrm{tr}(\Psi(a)b) = \mathrm{tr}(a\Psi^\dagger(b))$ for all $a \in M_n$ and $b \in M_m$,
		where $\mathrm{tr}$ denotes the unnormalized trace.
		It is contravariantly functorial in the sense that $(\Psi \circ \Phi)^\dagger = \Phi^\dagger \circ \Psi^\dagger$ and $\mathrm{id}^\dagger = \mathrm{id}$.}
	Functoriality of trace adjoints implies that the module axioms are satisfied.
	To check that the action maps are morphisms in $\ROBan$,
	let $\Psi \colon M_n \to M_m$ be cp with $\|\Psi\|_{\mathrm{cb}} \le 1$, and let $x \in M_m(T)_h$ be positive with $\|x\| \le 1$.
	Choose a Stinespring representation as in~\eqref{eq:stinespring}, with $\|\alpha\| \le 1$.
	Then, at the level of $T$-valued matrices, $\Psi^\dagger(x)$ is obtained by first applying the scalar conjugation $x \mapsto \alpha x \alpha^*$ and then taking the trace over the blocks.
	The scalar conjugation is positive and contractive by definition of dual operator system,
	and the block trace is so by iterated application of the dual weak direct sum axiom, padding by zero blocks if necessary.
	Thus $\Psi^\dagger(x)$ is again positive and of norm at most $1$, as required.

	While full faithfulness works as in the previous case,
	also the proof of essential surjectivity is similar.
	The only change is that a right $\mathsf{CMat}$-module $\mathcal{G}$ displays linear isomorphisms
	\begin{equation}
		\label{eq:dual_os_module_isomorphisms}
		\mathcal{G}(M_n)
		\cong
		\mathcal{G}(M_1)\otimes_\R \mathsf{CMat}(M_n,M_1)
		\cong
		(\mathcal{G}(M_1)_\C \otimes_\C M_n)_h,
	\end{equation}
	where the second isomorphism involves the canonical linear isomorphism $M_{n,h}^* \cong M_{n,h}$ obtained from the trace pairing.
	We now claim that transporting the cones and norms along these isomorphisms defines a dual operator system structure on $\mathcal{G}(M_1)_\C$.
	The point is that for every $\Psi \colon M_n \to M_m$ in $\mathsf{CMat}$, the diagram
	\[
		\begin{tikzcd}
			\mathsf{CMat}(M_m,M_1) \ar[r, "\cong"] \ar[d, "{-\circ\Psi}"'] & M_{m,h} \ar[d, "\Psi^\dagger"] \\
			\mathsf{CMat}(M_n,M_1) \ar[r, "\cong"] & M_{n,h}
		\end{tikzcd}
	\]
	commutes, as one can easily see from the trace pairing.
	Tensoring this diagram with $\mathcal{G}(M_1)$ shows that the above isomorphisms assemble to an isomorphism of right $\mathsf{CMat}$-modules once the right-hand side is equipped with the action by trace adjoints.

	It remains only to observe that the conditions required of a dual operator system hold.
	For $\alpha \in M_{n,m}$ with $\|\alpha\| \le 1$, the scalar conjugation map
	\begin{align*}
		M_n & \longrightarrow M_m \\
		x & \longmapsto \alpha^*x\alpha
	\end{align*}
	is the trace adjoint of the cp contraction
	\begin{align*}
		M_m & \longrightarrow M_n \\
		y & \longmapsto \alpha y\alpha^*.
	\end{align*}
	Moreover, the block trace map
	\begin{align*}
		M_{2n} & \longrightarrow M_n \\
		\begin{pmatrix}
			x & z \\
			z^* & y
		\end{pmatrix}
		& \longmapsto x+y
	\end{align*}
	is the trace adjoint of the cp contraction
	\begin{align*}
		M_n & \longrightarrow M_{2n} \\
		x & \longmapsto \begin{pmatrix}
			x & 0 \\
			0 & x
		\end{pmatrix}.
	\end{align*}
	Since the corresponding right module actions are morphisms in $\ROBan$, we conclude that these maps act positively and contractively on the matrix levels.
	Thus $\mathcal{G}(M_1)_\C$ is a dual operator system whose associated right $\mathsf{CMat}$-module is isomorphic to $\mathcal{G}$ by~\eqref{eq:dual_os_module_isomorphisms}.
	This proves essential surjectivity of the second equivalence.
\end{proof}

\begin{example}
	\label{ex:cmat_modules}
	Let us spell out the representable and corepresentable $\mathsf{CMat}$-modules.
	In particular, this will show that operator systems in our sense can be completely order isomorphic without being isomorphic in $\OpSys$.
	\begin{enumerate}
		\item
			For every $k \in \N$, the standard operator system structure on $M_k$ has matrix levels
			\[
				M_n(M_k)_h \cong M_{nk,h},
			\]
			which carry the usual positive semidefinite cones and operator norms.
			The representable left module $\mathsf{CMat}(M_k,-)$ gives an operator system with the same cones at all matrix levels, but with different norms for $k > 1$.
			Indeed, the Choi--Jamiolkowski isomorphism induced by the trace pairing~\cite[Theorem~3.14]{Paulsen} gives
			\[
				\mathsf{CMat}(M_k,M_n)
				\cong
				(M_n\otimes M_k)_h,
			\]
			with cp maps corresponding to positive semidefinite matrices.
			Also the module action by $\Psi \colon M_n \to M_m$ is postcomposition, matching the action of $\Psi$ on the right-hand side.
			However, the cb norm on the left-hand side does \emph{not} correspond to the usual operator norm on the right for $k > 1$.
			Already at $n = 1$, a positive $x \in M_k$ corresponds to the functional $y \mapsto \mathrm{tr}(xy)$ on $M_k$, whose cb norm is $\mathrm{tr}(x)$ rather than $\|x\|$.

			Overall, the systems $M_k$ and $\mathsf{CMat}(M_k,-)$ are completely order isomorphic, but for $k > 1$ not isomorphic in $\OpSys$.
		\item\label{item:cmat_left_modules_matrices}
			The corepresentable left module $\neg\mathsf{CMat}(-,M_k)$ behaves similarly.
			At matrix level $n$, its underlying vector space is $\mathsf{CMat}(M_n,M_k)^*$,
			and \cref{roban_internal_hom}\ref{roban_dual} shows that its positive cone and norm are given as follows.
			For every linear $\Omega \colon \mathsf{CMat}(M_n,M_k) \to \R$, we have
			\[
				\Omega \ge 0
				\qquad
				\Longleftrightarrow
				\qquad
				\Omega(\Psi) \ge 0 \quad \forall \, \textrm{cp } \Psi \colon M_n \to M_k,
			\]
			and
			\[
				\|\Omega\|
				=
				\inf_{-\Lambda \le \Omega \le \Lambda}
				\ \sup_{\Psi \in \mathsf{CMat}(M_n,M_k)_1}
				|\Lambda(\Psi)|,
			\]
			respectively.
			In particular, for positive $\Omega$ this is its ordinary dual norm.
			Again via the Choi--Jamiolkowski isomorphism and the trace pairing,
			the cone gets identified with the positive semidefinite cone in $M_{nk,h}$.
			However, for $k > 1$ the norm is again different from the operator norm on $M_{nk,h}$, for the same reason as in the previous item:
			already for $n=1$, transporting the norm along the isomorphism
			\[
				\mathsf{CMat}(M_1,M_k)^* \cong M_{k,h}
			\]
			yields the trace norm on the right-hand side.
		\item
			Overall, we obtain complete order isomorphisms
			\[
				\mathsf{CMat}(M_k, -)
				\cong
				(- \otimes M_k)_h
				\cong
				\neg\mathsf{CMat}(-, M_k),
			\]
			but neither of these two isomorphisms nor their composite is completely isometric if $k > 1$.
			For the composite, the norms coincide at level $1$, as they both correspond to the trace norm on $M_{k,h}$,
			but they differ at level $k$ as follows.
			The identity map in $\mathsf{CMat}(M_k,M_k)$ has cb norm $1$ in the representable module on the left.
			Its counterpart in the corepresentable module on the right is the ordinary trace functional $\operatorname{Tr}$ on linear endomorphisms of $M_{k,h}$.
			This functional has norm $k^2$:
			if $\|\Psi\|_{\mathrm{cb}} \le 1$, then in particular $\|\Psi\| \le 1$ in operator norm on $M_{k,h}$.
			Thus every complex eigenvalue of $\Psi$ has modulus $\le 1$, and hence $|\operatorname{Tr}(\Psi)| \le k^2$.
			Equality is attained at $\Psi = \id_{M_k}$, which has cb norm $1$.
		\item
			\label{item:cmat_right_modules_matrices}
			We can also consider $M_k$ as a dual operator system, with the usual positive cones on matrix levels, but with the trace norm on each matrix level $M_n(M_k)_h \cong M_{nk,h}$.
			Again the representable and corepresentable right modules $\mathsf{CMat}(-,M_k)$ and $\neg\mathsf{CMat}(M_k,-)$ have the same cones at all matrix levels, but different norms for $k > 1$.
			We refrain from spelling out the details here, since they are analogous to the left module case.
	\end{enumerate}
\end{example}

We now instantiate our general separation and representation theorems in the case of $\mathsf{CMat}$-modules,
formulated via \cref{thm:op_sys_as_modules} in terms of operator systems but proven in terms of $\mathsf{CMat}$-modules.
In order to stay as close as possible to the standard results on unital operator systems,
we formulate these results after applying the complete order isomorphisms of \cref{ex:cmat_modules},
even though these are not completely isometric.

\begin{corollary}
	\label{cor:op_sys_separation}
	Completely positive maps into matrix algebras distinguish morphisms of operator systems.
\end{corollary}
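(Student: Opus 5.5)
The plan is to apply the separation theorem (\cref{separation}) to left $\mathsf{CMat}$-modules. The precise content of the corollary is as follows: for operator systems $S,T$ and distinct morphisms $f,g\colon S\to T$ in $\OpSys$, there exist $k\in\N$ and a completely positive map $\Phi\colon T\to M_k$ with $\Phi f\neq\Phi g$. Here $M_k$ carries the trace-norm variant identified in \cref{ex:cmat_modules}\ref{item:cmat_left_modules_matrices}, so the map is a morphism into $\neg\mathsf{CMat}(-,M_k)$, which is completely order isomorphic to $M_k$.

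First, I pass through the equivalence $\mathsf{E}$ of \cref{thm:op_sys_as_modules}. This turns $f\neq g$ into distinct morphisms $\mathsf{E}(f)\neq\mathsf{E}(g)$ of left $\mathsf{CMat}$-modules, using faithfulness of $\mathsf{E}$. Next, I rewrite left $\mathsf{CMat}$-modules as right $\mathsf{CMat}^\op$-modules via the symmetry of $\ROBan$. The corepresentable right $\mathsf{CMat}^\op$-modules $\neg\mathsf{CMat}^\op(M_k,-)$ are then exactly the left modules $\neg\mathsf{CMat}(-,M_k)$.

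The hypothesis of \cref{separation} is that $\R$ is a coseparator in $\ROBan$. This follows from \cref{prop:roban_regular_bigenerator}: $\sharp_V$ is a regular monomorphism, hence mono, and a mono into a power of $\bot$ is exactly the coseparator property. Concretely, the positive contractions $V\to\R$ jointly detect distinct parallel morphisms into $V$. So \cref{separation} produces $k$ and a module morphism $\eta\colon \mathsf{E}(T)\to\neg\mathsf{CMat}(-,M_k)$ with $\eta\,\mathsf{E}(f)\neq\eta\,\mathsf{E}(g)$.

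Finally, I translate $\eta$ back into operator-system language. By \cref{thm:op_sys_as_modules}, the module $\neg\mathsf{CMat}(-,M_k)$ is $\mathsf{E}$ of some operator system. By \cref{ex:cmat_modules}\ref{item:cmat_left_modules_matrices}, this operator system has underlying $*$-vector space $M_k$, with the positive semidefinite cones at every matrix level. Fullness of $\mathsf{E}$ identifies $\eta$ with a morphism in $\OpSys$, i.e.\ a completely positive and completely contractive $*$-map $\Phi\colon T\to M_k$ for these norms. Forgetting norms, $\Phi$ is a completely positive map into $M_k$ with its usual matrix order. Faithfulness gives $\Phi f\neq\Phi g$.

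The main obstacle is this last identification. One has to check that the Choi--Jamiolkowski and trace-pairing identification $\mathsf{CMat}(M_n,M_k)^*\cong M_{nk,h}$ is natural in $M_n$ with respect to the $\mathsf{CMat}$-actions, so that it is a genuine module isomorphism onto $(-\otimes M_k)_h$ and respects cones. This should follow from the commuting square with trace adjoints already used in the proof of \cref{thm:op_sys_as_modules}. A more direct option avoids it: distinctness is detected at level $M_1$, and a positive contraction $\varphi\colon T_h\to\R$ with $\varphi f\neq\varphi g$ already gives a completely positive map into $M_1=\C$, via the dual Yoneda correspondence (\cref{prop:dual_yoneda_enriched}).
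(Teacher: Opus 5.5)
Your proposal is correct and is essentially the paper's proof: pass through the equivalence of \cref{thm:op_sys_as_modules}, apply \cref{separation} to left $\mathsf{CMat}$-modules viewed as right $\mathsf{CMat}^\op$-modules, and compose with the complete order isomorphism of \cref{ex:cmat_modules}\ref{item:cmat_left_modules_matrices}. You are somewhat more careful than the paper: it leaves the coseparator hypothesis on $\R$ implicit, and it writes the corepresentable as $\mathsf{CMat}(M_k,-)^*$, whereas the correctly-varianced left module is the $\neg\mathsf{CMat}(-,M_k)$ that you use.
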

\begin{proof}
	Let $\mathcal{F}$ and $\mathcal{G}$ be $\mathsf{CMat}$-modules, and let $\alpha, \beta \colon \mathcal{F} \to \mathcal{G}$
	be morphisms.
	If $\alpha \neq \beta$, then by \cref{separation} there exist $k \in \N$ and a morphism $\varphi \colon \mathcal{G} \to \mathsf{CMat}(M_k, -)^*$
	such that $\varphi \alpha \neq \varphi \beta$.
	By the previous example, the corepresentable module $\mathsf{CMat}(M_k, -)^*$ is completely order isomorphic to the usual matrix ordered vector space $M_k$,
	so that composing with this isomorphism proves the desired separation.
\end{proof}

\begin{corollary}
	\label{cor:op_sys_representation}
	Every operator system has a complete order embedding into a product of matrix algebras.
\end{corollary}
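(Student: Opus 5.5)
The plan is to apply \cref{representation} to the left $\mathsf{CMat}$-module $\mathsf{E}(S)$ of an operator system $S$, and then translate the resulting embedding back through \cref{thm:op_sys_as_modules} and \cref{ex:cmat_modules}. Since the representation theorem is stated for right modules, I will use the symmetry of $\ROBan$ to view left $\mathsf{CMat}$-modules as right $\mathsf{CMat}^\op$-modules. Under this identification the corepresentable right $\mathsf{CMat}^\op$-modules are exactly the corepresentable left modules $\neg\mathsf{CMat}(-,M_k)$. By \cref{prop:roban_regular_bigenerator}, $\R$ is a regular bigenerator in $\ROBan$, so the second part of \cref{representation} gives an embedding
\[
	\mathsf{E}(S) \longrightarrow \prod_{k \in \N} \prod_{\varphi \in \ROBan(M_k(S)_h,\R)} \neg\mathsf{CMat}(-,M_k)
\]
in ${}_{\mathsf{CMat}}\Mod$. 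This embedding is a composite of regular monomorphisms.

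Next I will identify the target and check what the embedding preserves. Products in ${}_{\mathsf{CMat}}\Mod$ are computed pointwise, by the pointwise computation of limits used in \cref{lem:module_neg_monos}. So at matrix level $n$ the target is the $\ell^\infty$-product in $\ROBan$ of the spaces $\neg\mathsf{CMat}(M_n,M_k)$, with the componentwise cone. By \cref{ex:cmat_modules}\ref{item:cmat_left_modules_matrices}, each factor is completely order isomorphic to $M_k$ with its usual cones. Composing with these isomorphisms, which are not isometric, I obtain a $*$-linear map from $S$ into a product of the matrix algebras $M_k$, one copy for each pair $(k,\varphi)$. At each level this map is the composite of the components of the embedding with order isomorphisms. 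By \cref{lem:module_neg_monos}, each component of a regular monomorphism of modules is a regular monomorphism in $\ROBan$, and by \cref{roban_regular_embeddings} a composite of these is an order embedding. Hence at every level $n$ the map is injective and reflects positivity, that is, it is a complete order embedding.

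The step I expect to be most delicate is making sense of the product of matrix algebras. The $\ell^\infty$-product of the spaces $\neg\mathsf{CMat}(M_n,M_k)$ carries trace-type norms, and these do not match the operator norms on $M_{nk}$. So the target should be understood as the matrix-ordered product: families that are bounded in the relevant norms, with entrywise cones, and considered only up to complete order isomorphism. I will address this by phrasing the conclusion purely order-theoretically, as injectivity plus reflection of the cones at every level. This is exactly what \cref{roban_regular_embeddings} supplies, and it does not depend on which norms are placed on the factors.

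Optionally, I will also record the explicit components via \eqref{eq:representation_explicit}. For each $k$ and each positive contraction $\varphi\colon M_k(S)_h\to\R$, the component sends $x\in M_n(S)_h$ to the functional $\Psi\mapsto\varphi(\Psi(x))$ on $\mathsf{CMat}(M_n,M_k)$. Under Choi--Jamio\l{}kowski, this corresponds to the cp map $S\to M_k$ associated with $\varphi$, which exhibits the embedding as being given by cp maps into matrix algebras.
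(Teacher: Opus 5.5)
Your proposal is correct and follows the paper's own proof. It applies \cref{representation} (with $\R$ a regular bigenerator by \cref{prop:roban_regular_bigenerator}), identifies each corepresentable factor with $M_k$ up to complete order isomorphism via \cref{ex:cmat_modules}, and uses \cref{lem:module_neg_monos,roban_regular_embeddings} to conclude that the composite is an order embedding at every matrix level. Your additional care is consistent with the paper's intent and with \cref{rem:op_sys_representation}: you pass through $\mathsf{CMat}^\op$ so that the corepresentables are correctly $\neg\mathsf{CMat}(-,M_k)$, and you read the target product purely order-theoretically.
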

\begin{proof}
	Let $\mathcal{F}$ be a left $\mathsf{CMat}$-module.
	By \Cref{representation}, there is a composite of regular monomorphisms
	\[
		\mathcal{F}
		\longrightarrow
		\prod_{M_k \in \mathsf{CMat}}
		\prod_{\varphi \in \ROBan(\mathcal{F}(M_k),\R)}
		\mathsf{CMat}(M_k,-)^*.
	\]
	By \cref{ex:cmat_modules}\ref{item:cmat_left_modules_matrices}, each corepresentable $\mathsf{CMat}(M_k,-)^*$ is just the matrix algebra $M_k$ with its usual matrix ordering (but with different matrix norms).
	Thanks to~\Cref{lem:module_neg_monos,roban_regular_embeddings}, the composite of regular monomorphisms is an order embedding, so that $\mathcal{F}$ embeds into a product of matrix algebras.
\end{proof}

\begin{remark}
	\label{rem:op_sys_representation}
	Since the product of matrix algebras in~\cref{cor:op_sys_representation} comes from a product in $\ROBan$,
	the elements of this product are \emph{uniformly bounded} families with respect to the corepresentable norms.
	These become the trace norms under the complete order isomorphisms of \cref{ex:cmat_modules} at matrix level $1$.
	This is stronger than uniform boundedness in operator norm, since
	$\|x\|_{\mathrm{op}} \le \|x\|_1 \le k\|x\|_{\mathrm{op}}$ for $x\in M_k$.

	In fact, for a unital operator system $S$,
	the proof of \cref{cor:op_sys_representation} recovers the usual representation theorem~\cite[Theorem~13.1]{Paulsen} after a normalization.
	To see this, write $\Psi_i \colon S \to M_{k_i}$ for the cp components of the embedding, indexed by $i \in J$.
	By \cref{ex:cmat_modules}\ref{item:cmat_left_modules_matrices}, the relevant norm on $M_{k_i}$ is the trace norm,
	so that contractivity of $\Psi_i$ yields in particular
	\[
		\operatorname{tr}\bigl(\Psi_i(1_S)\bigr) \le 1.
	\]
	Hence, by the norm formula for cp maps~\cite[Proposition~3.6]{Paulsen}, we have $\|\Psi_i\|_{\mathrm{cb}}=\|\Psi_i(1_S)\|_{\mathrm{op}}\le 1$.
	Therefore the complete order embedding 
	\begin{equation}
		\label{eq:S_embed}
		S \longrightarrow \prod_{i \in J} M_{k_i}
	\end{equation}
	is additionally completely contractive as a map from $S$ into the usual uniformly bounded product of matrix algebras (equipped with operator norm).

	However, the embedding~\eqref{eq:S_embed} does not yet have to be unital.
	To fix this, let $p_i$ be the support projection of $a_i\coloneqq\Psi_i(1_S)$.
	By the standard support property for cp maps, the range of $\Psi_i$ lies in $p_iM_{k_i}p_i$.
	After identifying each corner with a full matrix algebra, we may therefore assume that every $a_i$ is invertible.
	Then the map
	\begin{align*}
		S & \longrightarrow \prod_{i \in J} M_{k_i} \\
		s & \longmapsto (a_i^{-1/2}\Psi_i(s)a_i^{-1/2})_{i \in J}
	\end{align*}
	has ucp components and therefore lands in the uniformly bounded product.
	Passing to the support corner and conjugating by $a_i^{-1/2}$ preserve and reflect positivity at every matrix level, so this is still a complete order embedding.
	This yields the usual representation in $\mathcal{B}(\mathcal{H})$ upon taking the block-diagonal representation of the product.
\end{remark}

\begin{remark}
	\label{rem:op_sys_extension}
	One might also hope that \cref{extension} might produce a version of Arveson's extension theorem for operator systems.
	Unfortunately, its hypothesis does not hold in the present setting, since $\R$ is not regularly injective in $\ROBan$!
	It thus remains open to find a general extension theorem for modules which would apply to operator systems.

	Indeed, Min's~\cite[Section~5.3, Counterexample~(2)]{MinExponentialLaw}
	gives an equalizer $e\colon E\to V$ in $\ROBan$ together with an element $x\in E$ such that
	\[
		N_E(x)>N_V(e(x)),
	\]
	where we use the half-norms of \cref{roban_half_norm}.
	It follows that there is a positive contraction $\varphi\colon E\to\R$ with $\varphi(x)>N_V(e(x))$.
	Such a morphism cannot extend along $e$, since every positive contraction $\widetilde{\varphi}\colon V\to\R$ satisfies
	\[
		\widetilde{\varphi}(e(x))\le N_V(e(x)),
	\]
	again by \cref{roban_half_norm}.
\end{remark}

\begin{remark}
	\label{rem:os_tensor_closed}
	The category $\mathsf{CMat}$ has an evident symmetric monoidal $\ROBan$-category structure.
	On objects, the tensor product and monoidal unit are given by
	\[
		M_m\boxtimes M_n
		\coloneqq
		M_m\otimes M_n
		\cong
		M_{mn},
		\qquad
		E\coloneqq M_1,
	\]
	while on morphisms $\Phi\boxtimes\Psi\coloneqq\Phi\otimes\Psi$.
	By the fact that the tensor product of cp contractions is again a cp contraction,
	\cref{roban_misc}\ref{roban_universal_property} shows that this operation on morphisms is itself a morphism in $\ROBan$.
	The associators, unitors, and braidings are the obvious ones.
	Hence this makes $\mathsf{CMat}$ indeed into a symmetric monoidal $\ROBan$-category.

	We can now apply \cref{rem:day_convolution} in order to obtain a closed symmetric monoidal structure on ${}_{\mathsf{CMat}}\Mod \simeq \OpSys$.
	The monoidal unit object is the left module $\mathsf{CMat}(E,-)$, corresponding to $\C$ with its obvious operator system structure.
	Given that the universal property of the Day convolution tensor looks quite similar to the universal property of the maximal tensor product of unital operator systems~\cite[Thm.~5.8]{KavrukPaulsenTodorovTomfordeTensor},
	we expect that the resulting symmetric monoidal structure on $\OpSys$ will be closely related to the maximal tensor of~\cite{KavrukPaulsenTodorovTomfordeTensor}.

	Moreover, we expect that the resulting closed structure on $\OpSys$ will supply the desired notion of dual operator system hinted at in \cref{rem:os_comments_2}\ref{rem:os_wrong_dual}.
	Indeed every left module $\mathcal{F}$ will have an internal-hom dual $\mathcal{F}\Rightarrow\mathsf{CMat}(M_1,-)$, which thanks to the formula~\eqref{eq:day_convolution_internal_hom} is given by
	\begin{align*}
		(\mathcal{F}\Rightarrow\mathsf{CMat}(M_1,-))(M_n)
		&\:=\:
		\int_{M_m\in\mathsf{CMat}}
		\bigl(\mathcal{F}(M_m) \Rightarrow 
		\mathsf{CMat}(M_1, M_n\otimes M_m)\bigr) \\[3pt]
		&\:\cong\:
		[\mathsf{CMat},\ROBan]\bigl(\mathcal{F},\mathsf{CMat}(M_1, M_n\otimes-)\bigr).
	\end{align*}
	The Choi--Jamiolkowski isomorphism~\cite[Theorem~3.14]{Paulsen} implements a complete order isomorphism $\mathsf{CMat}(M_1,M_n\otimes -) \cong \mathsf{CMat}(M_n,-)$.\footnote{This isomorphism is also completely bounded: For fixed $n$, the elementary Choi formulas show that the norm of the Choi map
		$\mathsf{CMat}(M_n,M_m)\to\mathsf{CMat}(M_1,M_n\otimes M_m)$ and that of its inverse are both at most $n^2$, independently of $m$.
		Hence postcomposition with these maps defines mutually inverse bounded maps between the two hom-objects.}
	Therefore we obtain a complete order isomorphism
	\[
		(\mathcal{F}\Rightarrow\mathsf{CMat}(M_1,-))(M_n)
		\cong
		[\mathsf{CMat},\ROBan]\bigl(\mathcal{F},\mathsf{CMat}(M_n,-)\bigr),
	\]
	which however is generally \emph{not} a complete isometry.
	For an operator system $S$, this means that the positive cone of $S\Rightarrow\C$ at matrix level $n$ 
	is the cone of cp maps $S\to M_n$, as one would expect.
	In particular, this ``dual'' operator system $S\Rightarrow\C$ is importantly distinct from taking the levelwise dual:
	the latter is a dual operator system in the sense of \cref{def:dual_os}, i.e.~an object of $\OpSys^\vee$.
\end{remark}

\appendix
\section{Appendix}
\label{sec:appendix}
\subsection{Auxiliary material on limits and colimits}
\label{app:limits_colimits}

In this appendix, $\cV$ is a B\'enabou cosmos.
We recall some basic facts about limits and colimits in $\cV_0$ and in categories of modules.

\begin{lem}[{cf.~\cite[\S\S~3.3 and~3.8]{Kelly}}]
	\label{lem:module_pointwise_conical}
	For any small $\cV$-category $\cC$, conical limits and colimits in $\Mod_\cC$ exist and are computed pointwise.
\end{lem}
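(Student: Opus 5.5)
We prove the statement for right modules; left modules follow by passing to $\cC^\op$. Fix a small diagram $D \colon \cJ \to \Mod_\cC$, $j \mapsto \mathcal{F}_j$. For the limit case, set $\mathcal{L}(X) \coloneqq \lim_j \mathcal{F}_j(X)$ in $\cV_0$, which exists by completeness, with projections $p_{j,X}$. The key point is that $-\otimes \cC(Y,X)$ need not preserve limits, so we avoid putting the action on the tensor product directly. Instead we work with the transposed description of a right module from \cref{rem:enriched_functor_category}, namely morphisms $\mathcal{F}(X) \to (\cC(Y,X) \Rightarrow \mathcal{F}(Y))$. The functor $\cC(Y,X) \Rightarrow (-)$ is a right adjoint and therefore preserves limits, so
\[
	\cC(Y,X) \Rightarrow \mathcal{L}(Y) \cong \lim_j \bigl(\cC(Y,X) \Rightarrow \mathcal{F}_j(Y)\bigr).
\]
The transposed actions $\mathcal{F}_j(X) \to \cC(Y,X) \Rightarrow \mathcal{F}_j(Y)$ are natural in $j$ because the transition maps of $D$ are module morphisms. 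Precomposing them with the $p_{j,X}$ therefore gives a cone over this limit, and hence a morphism $\mathcal{L}(X) \to \cC(Y,X) \Rightarrow \mathcal{L}(Y)$. Transposing back defines $\mathcal{L}_{Y,X}$.

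For associativity and unitality, we check that both sides of each axiom agree after postcomposing with each $p_{j,-}$, which is jointly monic. After this postcomposition, each axiom reduces to the corresponding axiom for $\mathcal{F}_j$, using naturality of the transpositions in the tensor-hom adjunction. By construction, each $p_j$ is a module morphism. For the universal property, take a cone of module morphisms $\alpha_j \colon \mathcal{G} \to \mathcal{F}_j$. The componentwise universal maps $\mathcal{G}(X) \to \mathcal{L}(X)$ are unique, and we check that they are module morphisms again by testing against the jointly monic $p_j$. This shows that $\mathcal{L}$ is the limit and that it is computed pointwise.

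Colimits are dual and easier. Set $\mathcal{K}(X) \coloneqq \operatorname{colim}_j \mathcal{F}_j(X)$. The functor $-\otimes \cC(Y,X)$ is a left adjoint by closedness and symmetry, so it preserves colimits, giving
\[
	\mathcal{K}(X) \otimes \cC(Y,X) \cong \operatorname{colim}_j \bigl(\mathcal{F}_j(X) \otimes \cC(Y,X)\bigr).
\]
The action $\mathcal{K}_{Y,X}$ is then induced by the cocone formed by the composites $\mathcal{F}_j(X) \otimes \cC(Y,X) \to \mathcal{F}_j(Y) \to \mathcal{K}(Y)$. To verify associativity, which involves a double tensor $\mathcal{K}(Z) \otimes \cC(Y,Z) \otimes \cC(X,Y)$, we use that tensoring preserves colimits in each variable; the required equalities are then checked on the jointly epic coprojections. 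The universal property follows as in the limit case.

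The only real obstacle is the limit case, where the tensor product does not commute with limits, and working with transposed actions resolves it. Finally, because limits and colimits are computed pointwise, they are conical: the enriched universal property follows by the same argument applied to the hom-objects $[\cC^\op,\cV](\mathcal{G},-)$. These are ends, and ends commute with limits.
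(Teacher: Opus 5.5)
Your proof is correct, but it is organized differently from the paper's. The paper reduces to four special cases (products, equalizers, coproducts and coequalizers) and constructs each one pointwise by hand, whereas you treat an arbitrary small diagram at once.

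More substantively, the obstacle you identify in the limit case does not actually arise. The action on the pointwise limit is a morphism $\mathcal{L}(X)\otimes\cC(Y,X)\to\mathcal{L}(Y)$ whose \emph{codomain} is the limit. It can therefore be defined directly by the universal property of $\mathcal{L}(Y)$, using the cone $(\mathcal{F}_j)_{Y,X}\circ(p_{j,X}\otimes\id)$. This is exactly what the paper does for products and equalizers, and it needs no commutation of $\otimes$ with limits. Your detour through the transposed actions and the limit preservation of $\cC(Y,X)\Rightarrow(-)$ yields the same morphism by adjunction. Indeed, when you verify the axioms by postcomposing with the $p_j$, you are already using the untransposed form. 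So the detour is harmless but unnecessary.

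The genuinely nontrivial input is on the colimit side. There, as both you and the paper note, $-\otimes\cC(Y,X)$ and its iterates must preserve colimits, so that the domain of the action is itself a colimit with jointly epic coprojections.

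What your version adds is a uniform treatment of all diagram shapes and a sketch of the enriched (conical) universal property via ends commuting with limits. The paper does not check the enriched property; it only verifies the universal property in the ordinary category $\Mod_\cC$. One correction to that last paragraph: for colimits the relevant hom-objects are $[\cC^\op,\cV](-,\mathcal{G})$, and the argument uses that $(-)\Rightarrow\mathcal{G}(X)$ turns colimits into limits. So it is not literally ``the same argument applied to $[\cC^\op,\cV](\mathcal{G},-)$''.
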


\begin{proof}
	Since every small limit can be constructed from products and equalizers, and every small colimit from coproducts and coequalizers, it is enough to treat these four cases.

	For a family of right $\cC$-modules $(\mathcal{F}_i)_{i \in I}$, define
	\[
		\Bigl(\prod_{i \in I} \mathcal{F}_i\Bigr)(X) \coloneqq \prod_{i \in I} \mathcal{F}_i(X).
	\]
	Then for every pair of objects $X,Y \in \cC$, there is a unique morphism
	\[
		\Bigl(\prod_{i \in I} \mathcal{F}_i\Bigr)(Y) \otimes \cC(X,Y)
		\longrightarrow
		\Bigl(\prod_{i \in I} \mathcal{F}_i\Bigr)(X)
	\]
	whose $i$-th component is the composite
	\[
		\begin{tikzcd}[column sep=large]
			\Bigl(\prod_{j \in I} \mathcal{F}_j(Y)\Bigr) \otimes \cC(X,Y) \ar[r, "\pi_i \otimes \id"] & \mathcal{F}_i(Y) \otimes \cC(X,Y) \ar[r, "(\mathcal{F}_i)_{X,Y}"] & \mathcal{F}_i(X)
		\end{tikzcd}
	\]
	The module axioms follow because the product projections are jointly monic, and the universal property of the product in $\Mod_\cC$ is immediate from the pointwise universal property in $\cV_0$.

	Now let $\alpha,\beta \colon \mathcal{F} \to \mathcal{G}$ be morphisms of right $\cC$-modules, and consider the pointwise equalizers
	\[
		\begin{tikzcd}
			\mathcal{E}(X) \ar[r, "e_X"] & \mathcal{F}(X) \ar[r, shift left=1.5, "\alpha_X"] \ar[r, shift right=1.5, "\beta_X"'] & \mathcal{G}(X)
		\end{tikzcd}
	\]
	Then for every pair of objects $X,Y \in \cC$, the morphism of module square for $\alpha$ and $\beta$ gives the commutative diagram
	\[
		\begin{tikzcd}[column sep=large, row sep=large]
			\mathcal{E}(Y) \otimes \cC(X,Y)
				\ar[d, dashed, "\mathcal{E}_{X,Y}"']
				\ar[r, "e_Y \otimes \id"]
			&
			\mathcal{F}(Y) \otimes \cC(X,Y)
				\ar[d, "\mathcal{F}_{X,Y}"']
				\ar[r, shift left=.8ex, "\beta_Y \otimes \id"]
				\ar[r, shift right=.8ex, "\alpha_Y \otimes \id"']
			&
			\mathcal{G}(Y) \otimes \cC(X,Y)
					\ar[d, "\mathcal{G}_{X,Y}"']
			\\
			\mathcal{E}(X)
				\ar[r, "e_X"']
			&
			\mathcal{F}(X) 
				\ar[r, shift right=.8ex, "\alpha_X"']
				\ar[r, shift left=.8ex, "\beta_X"]
			&
			\mathcal{G}(X)
		\end{tikzcd}
	\]
	where the dashed arrow is induced.
	Again, the module axioms and the universal property of the equalizer follow pointwise because the equalizer morphisms $e_X$ are monic,
	and this very diagram shows that they are the components of a morphism of right $\cC$-modules.
	Its universal property is checked by pointwise verification after postcomposing with the monomorphisms $e_X$.

	For coproducts, define
	\[
		\Bigl(\coprod_{i \in I} \mathcal{F}_i\Bigr)(X) \coloneqq \coprod_{i \in I} \mathcal{F}_i(X).
	\]
	Since $\cV$ is symmetric monoidal closed, the functor $- \otimes \cC(X,Y)$ is a left adjoint and therefore preserves coproducts.
	Hence for every pair of objects $X,Y \in \cC$ we have a canonical isomorphism
	\[
		\Bigl(\coprod_{i \in I} \mathcal{F}_i(Y)\Bigr) \otimes \cC(X,Y)
		\,\cong\,
		\coprod_{i \in I} \bigl(\mathcal{F}_i(Y) \otimes \cC(X,Y)\bigr),
	\]
	and therefore a unique action morphism
	\[
		\Bigl(\coprod_{i \in I} \mathcal{F}_i\Bigr)(Y) \otimes \cC(X,Y)
		\longrightarrow
		\Bigl(\coprod_{i \in I} \mathcal{F}_i\Bigr)(X)
	\]
	whose restriction to the $i$-th summand is the action of $\mathcal{F}_i$.
	The module axioms and the universal property are then once again checked pointwise, using that the coproduct injections are jointly epimorphic.

	Finally, let $\alpha,\beta \colon \mathcal{F} \to \mathcal{G}$ be morphisms of right $\cC$-modules, and consider the pointwise coequalizers
	\[
		\begin{tikzcd}
			\mathcal{F}(X) \ar[r, shift left=1.5, "\alpha_X"] \ar[r, shift right=1.5, "\beta_X"'] & \mathcal{G}(X) \ar[r, "q_X"] & \mathcal{Q}(X)
		\end{tikzcd}
	\]
	For every pair of objects $X,Y \in \cC$, the functor $- \otimes \cC(X,Y)$ preserves coequalizers, so we have another coequalizer
	\[
		\begin{tikzcd}[column sep=large]
			\mathcal{F}(Y) \otimes \cC(X,Y) \ar[r, shift left=1.5, "\alpha_Y \otimes \id"] \ar[r, shift right=1.5, "\beta_Y \otimes \id"'] & \mathcal{G}(Y) \otimes \cC(X,Y) \ar[r, "q_Y \otimes \id"] & \mathcal{Q}(Y) \otimes \cC(X,Y).
		\end{tikzcd}
	\]
	On the other hand, naturality gives the commutative diagram
	\[
		\begin{tikzcd}[column sep=large, row sep=large]
			\mathcal{F}(Y) \otimes \cC(X,Y)
				\ar[d, "\mathcal{F}_{X,Y}"']
				\ar[r, shift left=.8ex, "\alpha_Y \otimes \id"]
				\ar[r, shift right=.8ex, "\beta_Y \otimes \id"']
			&
			\mathcal{G}(Y) \otimes \cC(X,Y)
				\ar[d, "\mathcal{G}_{X,Y}"']
				\ar[r, "q_Y \otimes \id"]
			&
			\mathcal{Q}(Y) \otimes \cC(X,Y)
				\ar[d, dashed, "\mathcal{Q}_{X,Y}"']
			\\
			\mathcal{F}(X)
				\ar[r, shift left=.8ex, "\alpha_X"]
				\ar[r, shift right=.8ex, "\beta_X"']
			&
			\mathcal{G}(X)
				\ar[r, "q_X"]
			&
			\mathcal{Q}(X)
		\end{tikzcd}
	\]
	where the dashed arrow is induced.
	The module axioms follow from those of $\mathcal{F}$ and $\mathcal{G}$ after precomposing with the relevant tensor products of the coequalizer maps $q_X$, using that tensoring preserves coequalizers.
	If $\eta \colon \mathcal{G} \to \mathcal{H}$ coequalizes $\alpha$ and $\beta$, then each component $\eta_X$ factors uniquely through $q_X$.
	These factorizations form a module morphism by pointwise verification after precomposing with the epimorphisms $q_Y \otimes \id$.
	Hence $\mathcal{Q}$ is the coequalizer of $\alpha$ and $\beta$ in $\Mod_\cC$.

	Thus products, equalizers, coproducts, and coequalizers are all computed pointwise in $\Mod_\cC$.
	The claimed statement on small conical limits and colimits follows.
\end{proof}

We first recall enriched coends and then turn to enriched ends.
For both, we take $\cJ$ to be a small $\cV$-category and $H \colon \cJ^\op \otimes \cJ \longrightarrow \cV$ a $\cV$-functor.

\begin{definition}[{cf.~\cite[\S 2.1]{Kelly}}]
	\label{def:enriched_coend}
	An \textbf{enriched coend} 
	\[
		\int^{X \in \cJ} H(X,X)
	\]
	is a colimit of the diagram
	\begin{equation}
		\label{cowedge_condition}
		\begin{tikzcd}[column sep=large, row sep=large]
			\vdots & \vdots \\[-1.5em]
			\vdots &
			H(Z,Z)
			&
			\\
			\cJ(Y,Z) \otimes H(Z,Y)
				\ar[ur]
				\ar[dr]
			& \vdots
			\\
			\vdots &
			H(Y,Y)
			&
			\\[-1.5em]
			\vdots & \vdots
		\end{tikzcd}
	\end{equation}
	where the two unnamed families of morphisms are induced by the functoriality of $H$ in the second and first variable, respectively.
\end{definition}

A cocone over the diagram to an object $A$ is also called an \textbf{enriched cowedge} from $H$ to $A$.
Thus an enriched coend consists of an object $\int^{X \in \cJ} H(X,X)$ together with an enriched cowedge from $H$ to it, which is initial among all enriched cowedges.
It can also be computed as the coequalizer
\[
	\begin{tikzcd}[column sep=large]
		\coprod_{Y,Z \in \cJ} \bigl( \cJ(Y,Z) \otimes H(Z,Y)\bigr)
		\ar[r, shift left=.8ex]
		\ar[r, shift right=.8ex]
		&
		\coprod_{X \in \cJ} H(X,X)
		\ar[r]
		&
		\int^{X \in \cJ} H(X,X)
	\end{tikzcd}
\]
where the two parallel morphisms are induced by the functoriality of $H$ in the second and first variable, respectively.

We also use the notion of enriched end, which is dual to enriched coend, but with tensors replaced by internal homs.
Concretely, the \textbf{enriched end}
\[
	\int_{X \in \cJ} H(X,X)
\]
is given by the limit of the diagram
\begin{equation}
	\label{wedge_condition}
	\begin{tikzcd}[column sep=large, row sep=large]
		\vdots & \vdots \\[-1.5em]
		H(Y,Y) \ar[dr] & \vdots \\
		\vdots & \cJ(Y,Z) \Rightarrow H(Y,Z) \\
		H(Z,Z) \ar[ur] & \vdots \\[-1.5em]
		\vdots & \vdots
	\end{tikzcd}
\end{equation}
where the two morphisms into $\cJ(Y,Z) \Rightarrow H(Y,Z)$ are the transposes of the functoriality morphisms of $H$ under the hom-tensor adjunction.
A general cone is also called an \textbf{enriched wedge}, and thus an enriched end consists of an object $\int_{X \in \cJ} H(X,X)$ together with a terminal enriched wedge from it to $H$.
It can also be computed as the equalizer
\begin{equation}
	\label{eq:end_equalizer}
	\begin{tikzcd}[column sep=large]
		\int_{X \in \cJ} H(X,X)
		\ar[r]
				&
				\prod_{X \in \cJ} H(X,X)
				\ar[r, shift left=.8ex]
				\ar[r, shift right=.8ex]
				&
				\prod_{Y,Z \in \cJ} \bigl(\cJ(Y,Z) \Rightarrow H(Y,Z)\bigr)
	\end{tikzcd}
\end{equation}
where the $(Y,Z)$-components of the two parallel morphisms are, respectively, the obvious product projections followed by transposes of the functoriality morphisms of $H$ under the hom-tensor adjunction.

\newpage\thispagestyle{empty}
\addtocontents{toc}{\protect\vspace{-0.75em}}
\addcontentsline{toc}{part}{Bibliography} 
{\linespread{1}\bibliographystyle{dpbib} \bibliography{references}}

\end{document}